\newtheorem{theorem}{Theorem}
\newtheorem{proposition}[theorem]{Proposition}
\newtheorem{lemma}[theorem]{Lemma}
\newtheorem{definition}[theorem]{Definition}
\newtheorem{corollary}[theorem]{Corollary}
\definecolor{darkgreen}{rgb}{0,0.6,0}
\providecommand{\F}{\mathbb{F}}
\providecommand{\N}{\mathbb{N}}
\providecommand{\Z}{\mathbb{Z}}
\DeclareMathOperator{\PG}{PG}
\newcommand{\cM}{\mathcal{M}}
\title{Lengths of divisible codes with restricted column multiplicities}
\author{Theresa K\"orner and Sascha Kurz}
\affil{University of Bayreuth, 95440 Bayreuth, Germany\\$\{$theresa.koerner,sascha.kurz$\}$@uni-bayreuth.de}
\date{}
\begin{document}

\maketitle

\begin{abstract}
  We determine the minimum possible column multiplicity of even, doubly-, and triply-even codes 
  given their length. This refines a classification result for the possible lengths of $q^r$-divisible 
  codes over $\mathbb{F}_q$. We also give a few computational results for field sizes $q>2$. Non-existence 
  results of divisible codes with restricted column multiplicities for a given length have applications e.g.\ in 
  Galois geometry and can be used for upper bounds on the maximum cardinality of subspace codes.

  \smallskip
  
  \noindent
  \textbf{Keywords:} Divisible codes, linear codes, Galois geometry\\
  \textbf{Mathematics Subject Classification:} 51E23 (05B40)
\end{abstract}

\section{Introduction}
\label{sec_introduction}

Adding a parity check bit to a binary linear code yields an even code, i.e., all codewords have an even weight. Doubly-even 
binary linear codes, where the weights of the codewords are multiples of four, have been the subject of extensive research 
for decades, see e.g.\ \cite{doran2011codes}. Also linear codes where all occurring weights are divisible by eight, so-called
triply-even codes, have been studied in the literature, see e.g.\ \cite{betsumiya2012triply,miezaki2019support,rodrigues2022some}. 
The mentioned classes of linear codes are 
special cases of so-called $\Delta$-divisible codes where the weights of the codewords all are divisible by some 
integer $\Delta>1$. They have e.g.\ applications for the maximum size of partial $k$-spreads, i.e., sets of pairwise 
disjoint $k$-dimensional subspaces, see e.g.\ \cite{honold2018partial}. More concretely, the non-existence of $q^{k-1}$-divisible 
codes over $\mathbb{F}_q$ of a certain length implies an upper bound on the cardinality of partial $k$-spreads. In \cite{kiermaier2020lengths} 
the possible lengths of $q^r$-divisible codes over $\mathbb{F}_q$ have been completely characterized. However, on the constructive side 
some of these codes require a relatively large column multiplicity. In some applications upper bounds on the maximum possible column multiplicity 
are known. E.g.\ in the situation of partial $k$-spreads the codes have a maximum column multiplicity of one, i.e., the codes have to be projective. 
This special case has received quite some attention, see e.g.\ \cite{honold2019lengths,kurz2020no}. Here we ask more generally for the minimum 
possible column multiplicity of a $\Delta$-divisible code over $\mathbb{F}_q$ having length $n$. Those results imply classification results 
for the possible lengths of $\Delta$-divisible codes over $\mathbb{F}_q$ given any upper bound $\gamma$ on the allowed column multiplicity, refining
the results from \cite{kiermaier2020lengths}. A general parametric solution to this problem seems very unlikely, so that we solve the first few smallest 
cases in this paper. Our utilized arguments will mostly be of geometric nature so that we will use the geometric reformulation of linear codes as 
multisets of points in projective spaces. Non-existence results for divisible codes of a certain length have applications for covering and 
packing problems in Galois geometry, see e.g.\ \cite{etzion2014covering,etzion2020subspace}. They can also be used to improve upon the
so-called Johnson bound on the size of constant-dimension codes, see \cite{kiermaier2020lengths}, as well as more general mixed dimension subspaces codes, 
see \cite{honold2019johnson}.

The remaining part of this paper is structured as follows. In Section~\ref{sec_preliminaries} we introduce the necessary preliminaries and state our 
main result as Theorem~\ref{main_thm}, i.e., for the binary case we completely determine the minimum possible column multiplicity of $\Delta$-divisible 
codes of length $n$ for each $\Delta\in\{2,4,8\}$. Classification results for even and doubly-even binary codes are obtained in Section~\ref{sec_even_doubly_even} 
and used to conclude results for triply-even binary codes in Section~\ref{sec_8_div}. We draw a brief conclusion in Section~\ref{sec_conclusion}. 

All of the used arguments are completely theoretical and do not rely on 
any computer calculations. As a verification and continuation we present computational results in Section~\ref{sec_computational_results} in the appendix. As a small 
justification why some of our arguments are quite lengthy we also give some information on the combinatorial richness of a special case in Section~\ref{sec_data}.       

\section{Preliminaries}
\label{sec_preliminaries}

For a prime power $q$ let $\mathbb{F}_q$ be the finite field with $q$ elements. Let $V\simeq \mathbb{F}_q^v$  be 
a $v$-dimensional vector space over $\mathbb{F}_q$  and $\PG(v - 1, q)$ the projective space associated to it. By a 
$k$-space of $\PG(v - 1, q)$ we mean a $k$-dimensional linear subspace of $V$, also using the terms points, lines, planes, and
hyperplanes for $1$-, $2$-, $3$-spaces, and $(v - 1)$-spaces, respectively. We define a multiset $\cM$ of points 
via its point multiplicities $\cM(P)\in \N$ for each point $P$. We allow the addition, subtraction, and scaling with rational factors of 
multisets of points componentwise as long as the resulting point multiplicities are all natural integers. For an arbitrary 
subspace $K$ in $\PG(v-1,q)$ we define $\cM(K):=\sum_{P\le K} \cM(P)$, where we write $A\le B$ if $A$ is a subspace of $B$ and the summation 
is over all points $P$. With this, we define the cardinality or size of $\cM$ 
as $\#\cM:=\cM(V)$, i.e., as the sum over all point multiplicities $\cM(P)$. A multiset $\cM$ of points in $\PG(v-1,q)$ is called spanning 
if $\left\langle P\,:\, \cM(P)\ge 1\right\rangle_{\F_q}=\F_q^v$. The maximum occurring point multiplicity of $\cM$ is denoted by
$\gamma_1(\cM)$, or just $\gamma_1$ whenever $\cM$ is clear from the context. More generally, for each $1\le i\le v$ we denote by
$\gamma_i$, more precisely $\gamma_i(\cM)$, the maximum of $\cM(K)$ where $K$ runs over all $i$-spaces. E.g., $\gamma_v=\#\cM$. 

To each multiset $\cM$ of $n$ points in $\PG(v - 1, q)$ we can assign a $q$-ary linear code $C(\cM)$ defined by
a generator matrix whose $n$ columns consist of representatives of the $n$ points of $\cM$. It is well-known, see e.g.~\cite{dodunekov1998codes},  
that this relation between $C(\cM)$ and $\cM$ associating a full-length linear $[n,v]_q$ code with a multiset $\cM$ of $n$ points in $\PG(v-1,q)$ 
induces a one-to-one correspondence between classes of (semi-)linearly equivalent spanning multisets of points and classes of (semi-)linearly 
equivalent full-length linear codes. The maximum point multiplicity $\gamma_1$ of a multiset $\cM$ of points is the same as the maximum column 
multiplicity of the corresponding linear code $C$ (given an arbitrary generator matrix). So, $C$ is projective iff $\cM$ is indeed a set, i.e., 
$\cM(P)\le 1$ for all points $P$.

A linear code $C$ is said to be $\Delta$-divisible, where $\Delta\in\mathbb{N}_{\ge 1}$, if all of its weights are divisible by $\Delta$. A 
multiset $\cM$ of points is called $\Delta$-divisible iff its corresponding linear code $C(\cM)$ is $\Delta$-divisible. More directly, a multiset 
$\cM$ of points is $\Delta$-divisible iff we have $\cM(H)\equiv \#\cM \pmod \Delta$ for all hyperplanes $H$. As for binary linear codes we speak of 
even, doubly-even, and triply-even multisets of points in $\PG(v-1,2)$ when they are $2$-, $4$-, and $8$-divisible, respectively.

If $\cM$ is a multiset of points in $\PG(v-1,q)$ and $K$ some subset of all points (usually a subspace in $\PG(v-1,q)$), then $\cM|_K$ denotes the restriction 
of $\cM$ to $K$, i.e., $\cM|_K(P)=\cM(P)$ for all $P\le K$ and $\cM|_K(P)=0$ otherwise. If $K$ is a hyperplane then the restricted multiset $\cM|_K$ inherits 
divisibility with a smaller divisibility constant, see e.g.\ \cite[Lemma 7]{honold2018partial}.
\begin{lemma}
  \label{lemma_divisibility_hyperplane}
  Let $\cM$ be a $\Delta$-divisible multiset of points in $\PG(v-1,q)$. If $q$ divides $\Delta$, then $\cM|_H$ is $(\Delta/q)$-divisible for each hyperplane 
  $H$. 
\end{lemma}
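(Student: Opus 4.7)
The plan is to reduce the claim to a congruence-counting argument using a standard averaging identity over the pencil of hyperplanes through a codimension-two subspace. By the definition of divisibility via hyperplanes, the statement that $\cM|_H$ is $(\Delta/q)$-divisible is equivalent to showing $\cM|_H(H') \equiv \#\cM|_H \pmod{\Delta/q}$ for every hyperplane $H'$ of $H$, i.e., every $(v-2)$-space $H' \le H$. Since $H' \le H$, we have $\cM|_H(H') = \cM(H')$ and $\#\cM|_H = \cM(H)$, so the target congruence is $\cM(H') \equiv \cM(H) \pmod{\Delta/q}$.

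Next I would exploit the fact that a fixed $(v-2)$-space $H'$ is contained in exactly $q+1$ hyperplanes of $\PG(v-1,q)$, one of which is $H$; call the others $H_1,\dots,H_q$. The key counting observation is that each point $P \le H'$ lies in all $q+1$ of these hyperplanes, while each point $P \not\le H'$ lies in exactly one of them (namely the hyperplane $\langle H',P\rangle$). Summing $\cM$ over the pencil therefore gives
\[
    \cM(H) + \sum_{i=1}^{q}\cM(H_i) \;=\; (q+1)\cM(H') + \bigl(\#\cM - \cM(H')\bigr) \;=\; q\,\cM(H') + \#\cM .
\]

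Now I apply the $\Delta$-divisibility of $\cM$: each of the $q+1$ terms on the left is congruent to $\#\cM$ modulo $\Delta$, so the left-hand side is $\equiv (q+1)\#\cM \pmod\Delta$. Comparing with the right-hand side yields $q\,\cM(H') \equiv q\,\#\cM \pmod\Delta$, and since $q \mid \Delta$ this is exactly $\cM(H') \equiv \#\cM \pmod{\Delta/q}$. Combined with $\cM(H) \equiv \#\cM \pmod\Delta$ (hence also mod $\Delta/q$), we conclude $\cM(H') \equiv \cM(H) \pmod{\Delta/q}$, which is what we needed.

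I do not expect any genuine obstacle: the argument is purely combinatorial and the only subtlety is keeping straight that one must take the modulus with respect to $\Delta/q$ rather than $\Delta$ after cancelling the factor $q$, which is precisely where the hypothesis $q \mid \Delta$ is used.
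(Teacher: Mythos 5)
Your proof is correct, and it is exactly the standard pencil-counting argument; the paper itself gives no proof of this lemma but only cites \cite[Lemma 7]{honold2018partial}, where essentially the same computation over the $q+1$ hyperplanes through a codimension-two subspace is used. Nothing to add.
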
  
Of course we can apply the lemma recursively so that $\cM|_S$ is $\left(\Delta/q^i\right)$-divisible for each subspace $S$ of codimension $i$, i.e., dimension $v-i$, 
if $\Delta$ is divisible by $q^i$.

For an arbitrary subspace $K$ we denote by $\chi_K$ its characteristic function, i.e., $\chi_K(P)=1$ iff $P\le K$ and $\chi_K(P)=0$ otherwise. The support 
$\operatorname{supp}(\cM)$ of a multiset of points $\cM$ is the set of points that have non-zero multiplicity.

For a given multiset $\cM$ of points in $\PG(v-1,q)$ we denote by $a_i$ the number of hyperplanes $H$ such that $\cM(H)=i$. If $\cM$ is spanning, then 
we have $a_{\#\cM}=0$. We say that $\cM$ has dimension $k$ if its span is a $k$-dimensional subspace $K$. By considering $\cM$ 
restricted to $K\cong \PG(k-1,q)$ we can always assume that $\cM$ is spanning if we choose a suitable integer for $k$. For the ease of notation we 
assume that $\cM$ is spanning in $\PG(k-1,q)$ in the following. Counting the number of hyperplanes in $\PG(k-1,q)$ gives   
\begin{equation}
  \sum_{i} a_i=\frac{q^k-1}{q-1}\label{se1}
\end{equation} 
and counting the number of pairs of points and hyperplanes gives
\begin{equation}
  \sum_{i} ia_i=\#\cM\cdot \frac{q^{k-1}-1}{q-1}.\label{se2}
\end{equation} 
By $\lambda_i$ we denote the number of points $P$ such that $\cM(P)=i$, so that
\begin{equation}
  \sum_{i} \lambda_i=\frac{q^k-1}{q-1}\label{se3}
\end{equation}
and
\begin{equation}
  \sum_{i} i\lambda_i =\#\cM.\label{se4}
\end{equation}  
Double-counting the incidences between pairs of elements in $\cM$ and hyperplanes gives
\begin{equation}
  \sum_{i} {i \choose 2} a_i={{\#\cM}\choose 2}\cdot \frac{q^{k-2}-1}{q-1}+q^{k-2}\cdot \sum_{i} {i\choose 2}\lambda_i.\label{se5}
\end{equation}
We call the equations (\ref{se1})-(\ref{se5}) the \emph{standard equations} for multisets of points. If $\cM$ is a set of points, then Equation~(\ref{se5}) simplifies to
$$
  \sum_{i} {i \choose 2} a_i={{\#\cM}\choose 2}\cdot \frac{q^{k-2}-1}{q-1}
$$
and is complemented to the standard equations (for sets of points) by equations (\ref{se1}) and (\ref{se2}). We call the vector $\left(a_i\right)_{i\in\N}$ the \emph{spectrum} 
of $\cM$. As an abbreviation we set $[k]_q:=\left(q^k-1\right)/(q-1)$ for all $k\in\N$.

If all hyperplanes have the same multiplicity, then there is a well known classification of the corresponding multisets of points. In order to keep the paper self-contained we give 
a direct proof. 

\begin{lemma}
  \label{lemma_repeated_simplex}
  Let $\cM$ be a spanning multiset of points of cardinality $n$ in $\PG(k-1,q)$ such that every hyperplane $H$ has multiplicity $\cM(H)=s$. Then, 
  we have $\cM(P)=t$ for every point $P$, where $t=n/[k]_q$. If $k\ge 2$, then we additionally have $s=t[k-1]_q$.  
\end{lemma}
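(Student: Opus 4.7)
The plan is to combine the standard equations~(\ref{se1})--(\ref{se2}) with a pencil-style double count at a fixed point. Under the hypothesis that every hyperplane has multiplicity $s$, the spectrum collapses to $a_s=[k]_q$ and $a_i=0$ for $i\ne s$. Plugging this into Equation~(\ref{se2}) immediately yields $s\cdot[k]_q = n\cdot[k-1]_q$, which (once uniform point multiplicity is established) is precisely the relation $s = t[k-1]_q$ for $t = n/[k]_q$.

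The heart of the argument is the claim that all points have the same multiplicity. I would fix an arbitrary point $P$ and consider the pencil of $[k-1]_q$ hyperplanes through $P$. For any other point $P'\ne P$, the number of hyperplanes containing both $P$ and $P'$ equals the number of hyperplanes through the line $\langle P,P'\rangle$, which is $[k-2]_q$. Double counting incidences between the points of $\cM$ and the hyperplanes through $P$ then gives
\begin{equation*}
  s\cdot[k-1]_q \;=\; \sum_{H\ni P}\cM(H) \;=\; \cM(P)\cdot[k-1]_q + \bigl(n-\cM(P)\bigr)\cdot[k-2]_q.
\end{equation*}
Since $[k-1]_q-[k-2]_q = q^{k-2}$, this solves uniquely for $\cM(P)$, and the resulting expression depends only on $n,s,k,q$. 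Hence $\cM(P)$ is independent of $P$, so every point has a common multiplicity $t$, and summing over the $[k]_q$ points forces $t=n/[k]_q$.

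The case $k=1$ is handled by a separate remark: $\PG(0,q)$ has a single point of multiplicity $n=t[1]_q=t$, and the second assertion is vacuous (the hypothesis about hyperplanes being $(k-1)$-dimensional becomes degenerate). I do not anticipate any real obstacle; the whole argument is routine bookkeeping with the standard equations plus a one-line incidence count, and the only mild care needed is in the pencil count, which cleanly requires $k\ge 2$, exactly the range in which the second statement is claimed.
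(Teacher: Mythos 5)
Your proposal is correct and follows essentially the same route as the paper: the standard equations give $a_s=[k]_q$ and $s[k]_q=n[k-1]_q$, and the uniform point multiplicity comes from double-counting the points of $\cM$ over the pencil of hyperplanes through a fixed point $P$ (your explicit incidence count is exactly the step the paper invokes, and it even absorbs the $k=2$ case that the paper treats separately, since $[0]_q=0$). No gaps.
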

\begin{proof}
  If $k=1$, then we can choose $t=n$. The unique point $P$ then satisfies $\cM(P)=\#\cM=n=t$. Now assume $k\ge 2$. Equation~(\ref{se1}) gives $a_s=[k]_q$, so that    
  Equation~(\ref{se2}) yields $s[k]_q=n[k-1]_q$. If $k=2$, then we have $\cM(P)=s=t[k-1]_q=t$ for every point $P$ since every hyperplane is a point for $k=2$. For $k\ge 3$ 
  double-counting the points of $\cM$ via the hyperplanes that contain $P$ gives $\cM(P)=t$.
\end{proof}

\begin{proposition}
  \label{prop_simplex_repetition_special}
  Let $0\le l\le r$ be integers and $\cM$ be a $q^r$-divisible multiset of points in $\PG(v-1,q)$ of cardinality $n=q^l\cdot [r+1-l]_q$. Then, there exists 
  a $(r+1-l)$-space $K$ such that $\cM= q^l\cdot \chi_K$.
\end{proposition}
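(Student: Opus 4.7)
The plan is to pin down the dimension $k$ of the span of $\cM$ and then invoke Lemma~\ref{lemma_repeated_simplex}. After restricting to the span we may assume $\cM$ is spanning in $\PG(k-1,q)$ for some $k\ge 1$, and the goal becomes to show that $k=r+1-l$ and $\cM(P)=q^l$ for every point $P$.

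For the upper bound $k\le r+1-l$, I would use the standard equations \eqref{se1}--\eqref{se2}. Since $\cM$ is spanning, every hyperplane $H$ satisfies $\cM(H)<n$, and $q^r$-divisibility forces $\cM(H)\equiv n\pmod{q^r}$, so $\cM(H)\le n-q^r$. Averaging the hyperplane multiplicities then gives
$$
\frac{n[k-1]_q}{[k]_q}=\frac{\sum_i i a_i}{\sum_i a_i}\le n-q^r,
$$
which after using $[k]_q-[k-1]_q=q^{k-1}$ rearranges to $n\ge q^{r-k+1}[k]_q$; substituting $n=q^l[r+1-l]_q$ and simplifying gives $k\le r+1-l$. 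For the lower bound $k\ge r+1-l$, I would apply Lemma~\ref{lemma_divisibility_hyperplane} iteratively: restricting to a single point (codimension $k-1$, permissible as long as $k-1\le r$, which holds in the relevant range) shows $q^{r-k+1}\mid \cM(P)$ for every point $P$. If $k\le r-l$ this forces $q^{l+1}$ to divide every $\cM(P)$, hence $q^{l+1}\mid n$; but $[r+1-l]_q\equiv 1\pmod q$ yields $v_q(n)=l$, a contradiction.

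Once $k=r+1-l$ is established, the chain of inequalities in the upper-bound argument collapses to equalities, so every hyperplane must attain the maximum, $\cM(H)=n-q^r=q^l[k-1]_q$. Lemma~\ref{lemma_repeated_simplex} then gives $\cM(P)=n/[k]_q=q^l$ at every point, i.e.\ $\cM=q^l\chi_K$ where $K$ is the span of $\cM$, which is the desired $(r+1-l)$-space.

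The main obstacle is the two-sided pinning of $k$; both bounds require slightly different tools (geometric averaging vs.\ inherited divisibility on restrictions), and the upper-bound computation is the place where one must be careful that the expression $q^l[r+1-l]_q$ makes the inequality tight exactly at $k=r+1-l$. After that, the uniformity conclusion via Lemma~\ref{lemma_repeated_simplex} is immediate and requires no further case analysis.
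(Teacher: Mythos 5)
Your proof is correct, and it ends in the same place as the paper's --- an application of Lemma~\ref{lemma_repeated_simplex} once all hyperplanes of the span are known to have the same multiplicity $n-q^r$ --- but the route to that constancy is genuinely different. The paper's argument is a one-line pigeonhole: for $l<r$ one has $q^r<n<2q^r$, so any hyperplane $H$ of the span satisfies $0\le \cM(H)<n$ and $\cM(H)\equiv n\pmod{q^r}$, leaving $\cM(H)=n-q^r$ as the only possibility (the case $l=r$ being the trivial $q^r$-fold point). You instead pin down the dimension $k$ of the span from both sides --- the upper bound $k\le r+1-l$ via averaging the standard equations against the bound $\cM(H)\le n-q^r$, and the lower bound via iterated application of Lemma~\ref{lemma_divisibility_hyperplane} down to a point together with the fact that $v_q(n)=l$ --- and then extract constancy of the hyperplane multiplicities from the equality case of your averaging inequality. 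Both steps of your two-sided argument check out (in particular $n=q^{r+1-k}[k]_q$ holds exactly at $k=r+1-l$ and the right-hand side is strictly increasing in $k$, so the averaging inequality is tight only there), and your version has the small merit of making the value $k=r+1-l$ explicit, which the paper leaves implicit in the formula $t=n/[k]_q$; the cost is that you need the lower bound on $k$ at all, which the paper's observation $n<2q^r$ renders unnecessary.
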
 
\begin{proof}
  If $l=r$, then $\cM$ is $q^r$-divisible with cardinality $q^r$, so that $\cM$ corresponds to a $q^r$-fold point. If $l<r$, then we have $q^r<n<2q^r$ and all hyperplanes 
  that do not contain all points of $\cM$ have the same multiplicity so that we can apply Lemma~\ref{lemma_repeated_simplex}.
\end{proof}

\begin{corollary}
  \label{cor_simplex_repetition_special}
  Let $\cM$ be a $\Delta$-divisible multiset of points in $\PG(v-1,2)$ of cardinality $n$. 
  \begin{itemize}
    \item If $\Delta=2$ and $n=2$, then $\cM$ is the characteristic function of a double point.
    \item If $\Delta=2$ and $n=3$, then $\cM$ is the characteristic function of a line. 
    \item If $\Delta=4$ and $n=4$, then $\cM$ is the characteristic function of a $4$-fold point.
    \item If $\Delta=4$ and $n=6$, then $\cM$ is the characteristic function of a double line.
    \item If $\Delta=4$ and $n=7$, then $\cM$ is the characteristic function of a plane.
    \item If $\Delta=8$ and $n=8$, then $\cM$ is the characteristic function of an $8$-fold point.
    \item If $\Delta=8$ and $n=12$, then $\cM$ is the characteristic function of an $4$-fold line.
    \item If $\Delta=8$ and $n=14$, then $\cM$ is the characteristic function of a double plane.
    \item If $\Delta=8$ and $n=15$, then $\cM$ is the characteristic function of a solid.
  \end{itemize}   
\end{corollary}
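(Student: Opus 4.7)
The plan is to observe that each of the nine cases is simply an instance of Proposition~\ref{prop_simplex_repetition_special} with $q=2$ and appropriately chosen $r$ and $l$. For each case I would match $\Delta=2^r$ (so $r\in\{1,2,3\}$) and then exhibit integers $l$ and $r+1-l$ such that the cardinality $n$ equals $q^l\cdot[r+1-l]_q=2^l\cdot\bigl(2^{r+1-l}-1\bigr)$. The conclusion then gives $\cM=2^l\cdot\chi_K$ for a $(r+1-l)$-space $K$, which is exactly the stated geometric object (an $l$-fold point, line, plane, or solid).

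Concretely, I would tabulate the decompositions: for $\Delta=2$ we have $2=2^1\cdot[1]_2$ and $3=2^0\cdot[2]_2$; for $\Delta=4$ we have $4=2^2\cdot[1]_2$, $6=2^1\cdot[2]_2$, $7=2^0\cdot[3]_2$; for $\Delta=8$ we have $8=2^3\cdot[1]_2$, $12=2^2\cdot[2]_2$, $14=2^1\cdot[3]_2$, $15=2^0\cdot[4]_2$. In each case $0\le l\le r$ is satisfied, so Proposition~\ref{prop_simplex_repetition_special} applies and yields the claimed characteristic function. The names translate directly: $(r+1-l)=1$ gives a $2^l$-fold point, $(r+1-l)=2$ gives a $2^l$-fold line, $(r+1-l)=3$ gives a $2^l$-fold plane, and $(r+1-l)=4$ gives a solid.

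There is no real obstacle here; the only thing to be a bit careful about is verifying that in every case the hypothesis $0\le l\le r$ holds and that the cardinality factorization is actually of the required form $2^l[r+1-l]_2$. Since this is a finite check over nine pairs $(\Delta,n)$, the proof is essentially a table.
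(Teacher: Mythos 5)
Your proposal is correct and matches the paper's (implicit) derivation: the corollary is stated without proof precisely because each case is the instance $q=2$ of Proposition~\ref{prop_simplex_repetition_special} with the factorizations $n=2^l[r+1-l]_2$ you tabulate, all of which check out with $0\le l\le r$.
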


If $\cM$ is a multiset of points and $Q$ a point in $\PG(v-1,q)$, where $v\ge 2$, then we can construct a multiset $\cM_Q$ in $\PG(v-2,q)$ by \emph{projection} 
trough $Q$, that is the multiset image under the map $P\mapsto \langle P,Q\rangle /Q$ setting $\cM_Q(L/Q)=\cM(L)-\cM(Q)$ for every line $L\ge P$ in $\PG(v-1,q)$. 
We directly verify the following properties:
\begin{lemma}   
  \label{lemma_projection}
  Let $\cM$ be a spanning $\Delta$-divisible multiset of points in $\PG(k-1,q)$, where $k\ge 2$, and let $\cM_Q$ arise from $\cM$ by projection through a point 
  $Q$. Then we have $\#\cM_Q=\#\cM-\cM(Q)$, $\cM_Q$ is $\Delta$-divisible, the span of $\cM_Q$ has dimension $k-1$, and $\gamma_1\!\left(\cM_Q\right)=\cM(L)-\cM(Q)$, 
  where $L$ is a line containing $Q$ and maximizing $\cM(L)$.
\end{lemma}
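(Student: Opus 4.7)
The plan is to verify the four assertions in turn, all of which follow fairly directly from the definition $\cM_Q(L/Q)=\cM(L)-\cM(Q)$ for lines $L$ through $Q$. The main ``bookkeeping'' ingredient is the standard observation that every point $P\ne Q$ lies on exactly one line through $Q$, so that the lines through $Q$ partition the points of $\PG(k-1,q)\setminus\{Q\}$.

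For the cardinality claim, I would sum the defining equation over all $[k-1]_q$ lines $L$ through $Q$; on the right the $\cM(Q)$ terms cancel the contribution $\cM(L)$ would make at $Q$, and by the partition remark the remainder equals $\sum_{P\ne Q}\cM(P)=\#\cM-\cM(Q)$. Divisibility is essentially the same count shifted up one dimension: a hyperplane $\bar H$ of $\PG(k-2,q)=V/\langle Q\rangle$ is of the form $H/Q$ for a unique hyperplane $H$ of $\PG(k-1,q)$ containing $Q$. Summing the defining equation over the lines through $Q$ that lie inside $H$ gives $\cM_Q(\bar H)=\cM(H)-\cM(Q)$. Since $\cM$ is $\Delta$-divisible, $\cM(H)\equiv\#\cM\pmod\Delta$, and subtracting $\cM(Q)$ from both sides together with the previous step yields $\cM_Q(\bar H)\equiv\#\cM_Q\pmod\Delta$ for every hyperplane $\bar H$ of $\PG(k-2,q)$.

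For the dimension statement I would argue that the quotient map $\pi\colon V\to V/\langle Q\rangle$ sends $\operatorname{supp}(\cM)$ to $\operatorname{supp}(\cM_Q)\cup\{\,0\,\}$: indeed the image of a point $P\ne Q$ in $\operatorname{supp}(\cM)$ is the quotient of the line $\langle P,Q\rangle$, whose $\cM_Q$-value $\cM(\langle P,Q\rangle)-\cM(Q)\ge\cM(P)\ge 1$. Because $\cM$ is spanning, $\pi(\operatorname{supp}(\cM))$ spans $V/\langle Q\rangle$, hence so does $\operatorname{supp}(\cM_Q)$, and $\cM_Q$ spans a space of dimension exactly $k-1$. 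Finally the claim about $\gamma_1(\cM_Q)$ is tautological: the points of $\PG(k-2,q)$ are in bijection with the lines of $\PG(k-1,q)$ through $Q$, and the definition gives $\cM_Q(L/Q)=\cM(L)-\cM(Q)$, whose maximum over $L$ is by definition $\gamma_1(\cM_Q)$.

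There is no genuine obstacle here; the only place where a little care is needed is the dimension claim, because a priori $Q$ may or may not lie in $\operatorname{supp}(\cM)$, and one must confirm that after projection the surviving points still span the quotient. This is handled by the observation above that each $P\in\operatorname{supp}(\cM)\setminus\{Q\}$ projects to a nonzero point of $\operatorname{supp}(\cM_Q)$.
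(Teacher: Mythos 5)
Your verification is correct, and it is precisely the routine check the paper has in mind: the paper offers no written proof (it introduces the lemma with ``We directly verify the following properties''), and your partition-of-points-by-lines-through-$Q$ argument, together with the bijection between hyperplanes of $V/\langle Q\rangle$ and hyperplanes of $V$ containing $Q$, is the standard direct verification. Your extra care on the dimension claim (noting that every $P\in\operatorname{supp}(\cM)\setminus\{Q\}$ projects to a point of positive $\cM_Q$-multiplicity, so the spanning property survives the quotient) is exactly the one point worth spelling out.
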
  

In the binary case also $2^r$-divisible multisets of points of cardinality $2^{r+1}$ can be characterized easily for each $r\in\N$. We first state
an auxiliary result that is also used later on.
\begin{lemma}
  \label{lemma_lower_bound_space_mult}
  Let $\cM$ be a spanning multiset of points in $\PG(k-1,q)$ with cardinality $n$, $1\le l\le k-2$, and $K$ be an arbitrary $l$-dimensional subspace. If all hyperplanes 
  containing $K$ have cardinality $s$, then $\cM(K)=s-\frac{n-s}{q-1}+\frac{n-s}{q^{k-l-1}(q-1)}>s-\frac{n-s}{q-1}$. If all hyperplanes containing $K$ have cardinality at least 
  $s$, then $\cM(K)\ge s-\frac{n-s}{q-1}+\frac{n-s}{q^{k-l-1}(q-1)}>s-\frac{n-s}{q-1}$. 
\end{lemma}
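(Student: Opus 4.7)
The plan is to double-count via the hyperplanes through $K$. There are exactly $m := [k-l]_q$ such hyperplanes $H_1,\dots,H_m$ in $\PG(k-1,q)$. Each point $P \le K$ lies in all $m$ of them, while each point $P \not\le K$ lies in exactly $[k-l-1]_q$ of them, since such an incidence is equivalent to the hyperplane containing the $(l+1)$-space $\langle P,K\rangle$. Weighting by $\cM$ therefore yields
\[
  \sum_{i=1}^{m} \cM(H_i) \;=\; m\cdot\cM(K) + [k-l-1]_q\bigl(n - \cM(K)\bigr) \;=\; q^{k-l-1}\cM(K) + [k-l-1]_q\, n,
\]
using the identity $[k-l]_q - [k-l-1]_q = q^{k-l-1}$.

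Under the equality hypothesis, the left-hand side equals $ms = [k-l]_q\, s$, and solving for $\cM(K)$ gives $\cM(K) = \bigl([k-l]_q\, s - [k-l-1]_q\, n\bigr)/q^{k-l-1}$. A short rearrangement (rewriting $[k-l]_q = q^{k-l-1} + [k-l-1]_q$ and then grouping the coefficient of $n-s$) identifies this expression with $s - \frac{n-s}{q-1} + \frac{n-s}{q^{k-l-1}(q-1)}$. For the weaker hypothesis $\cM(H_i) \ge s$, the same derivation applies with $\ge$ in place of $=$ on the left-hand side, producing the stated lower bound on $\cM(K)$.

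For the strict inequality $\cM(K) > s - \frac{n-s}{q-1}$ it suffices to check that $\frac{n-s}{q^{k-l-1}(q-1)} > 0$, i.e.\ $n > s$. Since any hyperplane is a proper subspace of $\PG(k-1,q)$ and $\cM$ is spanning, each $H_i$ satisfies $\cM(H_i) < n$; combined with $s = \cM(H_i)$ (equality case) or $s \le \cM(H_i)$ (inequality case), this forces $s < n$. No step is a real obstacle here: the whole lemma is a single incidence count plus algebraic bookkeeping.
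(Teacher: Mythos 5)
Your proof is correct and follows essentially the same route as the paper: a single incidence count over the $[k-l]_q$ hyperplanes through $K$, followed by solving for $\cM(K)$ and using the spanning hypothesis to get $s<n$ for the strict inequality. The paper states the count in the equivalent form $[k-l]_q(s-\cM(K))=[k-l-1]_q(n-\cM(K))$, but the algebra is identical.
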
  
\begin{proof}
  Counting points via the hyperplanes containing $K$ yields
  $$
    [k-l]_q\cdot\left(s-\cM(K)\right)=[k-l-1]_q\cdot\left(n-\cM(K)\right)
  $$  
  in the first case. Solving for $\cM(K)$ yields
  $$
    \cM(K)=\frac{[k-l]_q\cdot s-[k-l-1]_q\cdot n}{q^{k-l-1}}=s-\frac{n-s}{q-1}+\frac{n-s}{q^{k-l-1}(q-1)}>s-\frac{n-s}{q-1}.
  $$
  In the second case the same reasoning yields
  $$
    \cM(K)\ge s-\frac{n-s}{q-1}+\frac{n-s}{q^{k-l-1}(q-1)}>s-\frac{n-s}{q-1}.  
  $$
\end{proof}

\begin{proposition}
  \label{prop_multiples_of_affine_spaces}
  Let $r\ge 1$ be an integer and $\cM$ be a $2^r$-divisible multiset of points in $\PG(v-1,2)$ with cardinality $2^{r+1}$. Then, either $\cM=2^{r+1}\cdot\chi_P$ for a point $P$ 
  or there exist subspaces $K$ and $E\le K$ with $r+1 \ge \dim(E)=\dim(K)-1\ge 1$ such that $\cM=2^{r+1-\dim(E)} \cdot \chi_{K\backslash E}=2^{r+1-\dim(E)} \cdot \chi_K-2^{r+1-\dim(E)} \cdot \chi_E$.
\end{proposition}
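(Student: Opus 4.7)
The plan is to reduce to the spanning case and pin down the hyperplane spectrum via the first two standard equations. Passing to the span, write $K\le\PG(v-1,2)$ for the subspace spanned by $\operatorname{supp}(\cM)$ and set $k:=\dim K$. The case $k=1$ is immediate: the whole cardinality $2^{r+1}$ is concentrated on the unique point of $K$, yielding $\cM=2^{r+1}\chi_P$, which is the first alternative of the statement. Henceforth assume $k\ge 2$.

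Since $\cM$ is $2^r$-divisible with $\#\cM=2^{r+1}$, every hyperplane $H$ of $K$ satisfies $\cM(H)\in\{0,2^r,2^{r+1}\}$, and $\cM$ spanning in $K$ rules out $a_{2^{r+1}}\ge 1$. The standard equations (\ref{se1}) and (\ref{se2}) applied inside $K$ then reduce to $a_0+a_{2^r}=2^k-1$ and $a_{2^r}=2^k-2$, forcing $a_0=1$ and $a_{2^r}=2^k-2$. Hence there is a unique hyperplane $H_0$ of $K$ with $\cM(H_0)=0$, every other hyperplane of $K$ has $\cM$-multiplicity exactly $2^r$, and in particular $\operatorname{supp}(\cM)\subseteq K\setminus H_0$.

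To finish, I would determine $\cM(P)$ for $P\in K\setminus H_0$ by double-counting pairs $(Q,H)$ with $Q\in\operatorname{supp}(\cM)$ and $H$ a hyperplane of $K$ through $P$. Since $P\notin H_0$, every such $H$ has $\cM(H)=2^r$, and the resulting incidence identity --- which is Lemma~\ref{lemma_lower_bound_space_mult} applied with $l=1$, $s=2^r$, $n=2^{r+1}$, $q=2$ when $k\ge 3$; the case $k=2$ is direct --- yields $\cM(P)=2^{r+2-k}$, constant over $K\setminus H_0$. Setting $E:=H_0$ with $\dim E=k-1$ produces $\cM=2^{r+1-\dim E}\chi_{K\setminus E}$, and the integrality of $\cM(P)$ forces $k\le r+2$, i.e.\ $\dim E\le r+1$, exactly as claimed. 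The only mild obstacle is running the spectrum argument \emph{inside} $K$: one needs spanning to invoke $a_{2^{r+1}}=0$, which would not be available in the ambient $\PG(v-1,2)$.
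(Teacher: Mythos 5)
Your proof is correct and follows essentially the same route as the paper's: reduce to the span, use the standard equations to get $a_0=1$ and $a_{2^r}=2^k-2$, take $E$ to be the unique zero hyperplane, and apply Lemma~\ref{lemma_lower_bound_space_mult} to conclude that every point outside $E$ has multiplicity $2^{r+2-k}$. The only (harmless) difference is that you apply the lemma to every point outside $E$ directly, whereas the paper applies it to points of positive multiplicity and then counts; both yield the same conclusion.
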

\begin{proof}
  Let $k$ be the dimension of the span of $\cM$. If $k=1$, then there clearly exists a point $P$ with $\cM=2^{r+1}\cdot\chi_P$. For $k\ge 2$ the 
  standard equations yield $a_0=1$ and $a_{2^r}=2^{k}-2$. We choose $E$ as the unique hyperplane with multiplicity zero (and $K$ as the entire ambient space). If $P$ is a point with $\cM(P)>0$, then 
  Lemma~\ref{lemma_lower_bound_space_mult} yields $\cM(P)=2^{r-k+2}$. Since there are exactly $2^{k-1}$ points outside of $E$, all points $P$ outside of $E$ 
  have multiplicity $2^{r-k+2}=2^{r+1-\dim(E)}$.
\end{proof}
In other words, the corresponding multisets of points are suitable multiples of affine spaces or are given by $2^{r+1}$-fold points, which might be considered as a degenerated case. 
We remark that the corresponding situation for $q>2$ is more complicated, see papers on the so-called cylinder conjecture \cite{de2019cylinder,kurz2021generalization}. 

\begin{definition} 
  By $\Gamma_q(\Delta,n)$ we denote the minimum of $\gamma_1(\cM)$ over all $\Delta$-divisible multisets 
  of points $\cM$ in $\PG(v-1,q)$ with cardinality $n$, where $v$ is sufficiently large. If no such multiset of points 
  exist we set $\Gamma_q(\Delta,n)=\infty$. 
\end{definition}

In \cite[Theorem 1]{ward1981divisible} it was shown that each $\left(p^ed\right)$-divisible code over a finite field with characteristic $p$, where $\gcd(p,d)=1$, is  
a $d$-fold repetition of a $p^e$-divisible code. So it 
suffices to determine $\Gamma_q(\Delta,n)$ for the cases when $\Delta$ has no non-trivial factor that is coprime to $q$. In \cite[Theorem 1]{kiermaier2020lengths} the possible (effective) 
lengths of $q^r$-divisible codes over $\F_q$ were completely characterized for all $r\in\N$. In order to state the result we need a bit more notation.  For each 
$r\in \N$ and each integer $0\le i\le r$ we define $s_q(r,i) := q^i\cdot [r-i+1]_q$. Note that the number $s_q(r,i)$ is divisible by $q^i$ but not by $q^{i+1}$. This
allows us to create kind of a positional system upon the sequence of base numbers $S_q(r) := \big(s_q(r,0),s_q(r,1),\dots,s_q(r,r)\big)$. With this, each integer $n$ 
has a unique \emph{$S_q(r)$-adic expansion}  
$$
  n = \sum_{i=0}^r e_i s_q(r,i)
$$
with $e_0,\dots,e_{r-1}\in \{0,\dots,q-1\}$  and \emph{leading coefficient} $e_r \in \Z$. Rewritten to our geometrical setting \cite[Theorem 1]{kiermaier2020lengths} says:
\begin{theorem}
  \label{thm_lengths_of_divisible_codes}
  For $n \in \Z$ and $r \in\N$ the following are equivalent:
  \begin{enumerate}
    \item[(i)] For sufficiently large $v$ there exists a $q^r$-divisible multiset of points of cardinality $n$ in $\PG(v-1,q)$.
    \item[(ii)] The leading coefficient $e_r$ of the $S_q(r)$-adic expansion of $n$ is non-negative.
  \end{enumerate}  
\end{theorem}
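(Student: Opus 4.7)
For (ii)$\Rightarrow$(i), the construction is the natural one. For each $0\le i\le r$, the multiset $q^i\cdot\chi_{K_i}$, where $K_i$ is any $(r{-}i{+}1)$-dimensional subspace of a sufficiently large ambient projective space, is $q^r$-divisible of cardinality $s_q(r,i)$: a hyperplane either contains $K_i$ (contributing $q^i[r{-}i{+}1]_q$) or meets it in a hyperplane of $K_i$ (contributing $q^i[r{-}i]_q$), and the two weights differ by exactly $q^r$. Taking $e_i$ copies of such a block for each $i<r$, together with $e_r$ copies of a $q^r$-fold point (the $i=r$ block), and placing them in pairwise skew subspaces of $\PG(v-1,q)$ for $v$ large enough produces a $q^r$-divisible multiset of cardinality $n=\sum_{i=0}^r e_i s_q(r,i)$. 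Non-negativity of $e_r$ enters only to permit the $q^r$-fold points to appear a non-negative number of times.

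For (i)$\Rightarrow$(ii) I would argue by induction on $r$. The case $r=0$ is immediate since $S_q(0)=(1)$ reduces the claim to the tautology $n\ge 0$. For the induction step, let $\cM$ be a spanning $q^r$-divisible multiset of cardinality $n$ in $\PG(k-1,q)$. By Lemma~\ref{lemma_divisibility_hyperplane}, for every hyperplane $H$ the restriction $\cM|_H$ is $q^{r-1}$-divisible, so the inductive hypothesis forces the leading coefficient of $\cM(H)$ in its $S_q(r-1)$-adic expansion to be non-negative. The arithmetic bridge between the two levels is the identity
\[
  s_q(r,i+1)=q\cdot s_q(r-1,i) \qquad (0\le i<r),
\]
which is the natural vehicle for translating the $S_q(r-1)$-adic data on hyperplanes into $S_q(r)$-adic data on $n$ once aggregated through the standard equations (\ref{se1}) and (\ref{se2}).

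The main obstacle lies precisely in this aggregation: non-negativity of a leading digit is a non-linear integrality statement, so pointwise bounds on the $\cM(H)$ cannot simply be averaged to give a bound on $n$. My plan is therefore a joint induction on the pair $(r,n)$. If some point $P$ satisfies $\cM(P)\ge q^r$, then $\cM-q^r\chi_P$ is $q^r$-divisible of cardinality $n-s_q(r,r)$, and since $s_q(r,r)=q^r$ the leading $S_q(r)$-digit of $n$ decreases by exactly one, closing this case by downward induction on $n$. In the opposite extreme, Propositions~\ref{prop_simplex_repetition_special} and \ref{prop_multiples_of_affine_spaces} completely pin down $\cM$ at the cardinalities $s_q(r,i)$ and the immediately neighbouring values, furnishing the base cases of the downward induction. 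In the remaining generic case I would hunt for a subspace $K$ of dimension $r-i+1$ (for some $i<r$) such that $\cM-q^i\chi_K$ is a non-negative $q^r$-divisible multiset; the existence of such a sub-configuration is exactly the structural content that separates admissible from non-admissible cardinalities, and I expect to extract it from a careful spectrum analysis via (\ref{se1})-(\ref{se5}) combined with Lemma~\ref{lemma_lower_bound_space_mult} applied to the subspaces where $\cM$ is concentrated. The delicate verification that such a reduction is always available when $e_r$ threatens to become negative is, in my view, the technical heart of the argument.
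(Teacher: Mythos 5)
First, note that the paper does not prove this theorem at all: it is quoted verbatim from \cite[Theorem 1]{kiermaier2020lengths} and used as a black box, so there is no in-paper proof to compare against. Judged on its own merits, your proposal establishes only the easy direction. The construction for (ii)$\Rightarrow$(i) is correct and standard: the blocks $q^i\cdot\chi_{K_i}$ are indeed $q^r$-divisible of cardinality $s_q(r,i)$, and juxtaposing them in skew subspaces works.

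The direction (i)$\Rightarrow$(ii) is the substance of the theorem, and there you have only a plan with an acknowledged hole --- and the plan, as stated, cannot be repaired. Your generic step is to find either a point of multiplicity at least $q^r$ or a subspace $K$ of dimension $r-i+1$ with $\cM-q^i\chi_K\ge 0$, i.e.\ to peel a standard block off $\cM$. Divisible multisets need not decompose this way. Already for $q=2$, $r=1$, $n=5$, Proposition~\ref{prop_2_div_card_5} of this paper exhibits a $2$-divisible set of five points that is a projective base $B_5$: it has no point of multiplicity $2$ and no three points on a line, so neither an $s_2(1,1)$-block nor an $s_2(1,0)$-block can be subtracted, and $n=5$ is not one of your base cases. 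Your induction therefore gets stuck on a configuration for which the theorem is true, which shows the reduction is not ``always available'' and the technical heart you defer is not merely delicate but absent. The actual proof in \cite{kiermaier2020lengths} proceeds quite differently: it inducts on $r$ by passing to a hyperplane $H$ of \emph{minimum} multiplicity, using that $\cM(H)\equiv n\pmod{q^r}$, that the minimum hyperplane multiplicity is at most the average $n[k-1]_q/[k]_q$, and an arithmetic lemma linking the leading coefficient of the $S_q(r)$-adic expansion of $n$ to that of the $S_q(r-1)$-adic expansion of $\cM(H)$ --- it never needs to locate standard blocks inside $\cM$. Your identity $s_q(r,i+1)=q\cdot s_q(r-1,i)$ is indeed the right bridge, but you would need to couple it with the minimum-hyperplane argument rather than with a decomposition of $\cM$.
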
 
As an example we consider the $S_2(2)$-adic expansion of $9$: $1\cdot 7 + 1\cdot 6 -1\cdot 4$. So, there is no $4$-divisible multiset of points with cardinality $9$ in $\PG(v-1,2)$, where the dimension
$v$ of the ambient space is arbitrary. A direct implication of Theorem~\ref{thm_lengths_of_divisible_codes} is:
\begin{proposition}
  \label{prop_Gamma_infty}
  We have $\Gamma_2(2,1)=\infty$, $\Gamma_2(4,n)=\infty$ for $n\in \{1,2,3,5,9\}$, and $\Gamma_2(8,n)=\infty$ for $n\in \{1, 2, 3, 4, 5, 6, 7, 9, 10, 11, 13, 17, 18, 19, 21, 25, 33\}$. 
\end{proposition}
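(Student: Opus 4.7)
The plan is to derive this as an immediate corollary of Theorem~\ref{thm_lengths_of_divisible_codes}: for each pair $(\Delta,n)$ listed, I will exhibit the $S_2(r)$-adic expansion (with $\Delta=2^r$) and verify that its leading coefficient $e_r$ is negative, so that clause (ii) of the theorem fails. Then clause (i) tells us there is no $2^r$-divisible multiset of cardinality $n$ in any $\PG(v-1,2)$, whence $\Gamma_2(\Delta,n)=\infty$ by definition.

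First I would write down the three relevant base sequences explicitly: $S_2(1)=(3,2)$ for $\Delta=2$, $S_2(2)=(7,6,4)$ for $\Delta=4$, and $S_2(3)=(15,14,12,8)$ for $\Delta=8$. Since the required expansion has $e_0,\dots,e_{r-1}\in\{0,1\}$ and $e_r\in\Z$, it is uniquely determined by the following greedy rule (exploiting parities): $e_0$ is the parity of $n$; after replacing $n$ by $(n-e_0\cdot s_2(r,0))/2$ one reads off $e_1$ as the parity of the result; and so on. This makes the computation completely mechanical.

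Second, I would run through the three cases. For $\Delta=2$, $n=1$ gives $1=1\cdot 3-1\cdot 2$, so $e_1=-1<0$. For $\Delta=4$ one obtains $1=7+6-3\cdot 4$, $2=6-4$, $3=7-4$, $5=7+6-2\cdot 4$, $9=7+6-4$, all with negative $e_2$. For $\Delta=8$, I would tabulate the expansion $n=15\,e_0+14\,e_1+12\,e_2+8\,e_3$ for each of the seventeen listed values; a representative computation is $n=33=15+14+12-8$ (so $e_3=-1$), and similarly $n=1=15+14+12-5\cdot 8$, $n=4=12-8$, $n=17=15+14+12-3\cdot 8$, etc. In each case the leading coefficient comes out strictly negative.

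There is essentially no obstacle: the proof is pure bookkeeping, and the mildly tedious part is just checking the seventeen cases for $\Delta=8$ without arithmetic error. I would present this as a short paragraph citing Theorem~\ref{thm_lengths_of_divisible_codes}, followed by a compact table (one row per value of $n$) listing the coefficients $(e_0,e_1,\dots,e_r)$ so the reader can verify the negativity of $e_r$ at a glance.
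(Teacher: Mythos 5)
Your proposal is correct and is exactly the route the paper takes: the paper states Proposition~\ref{prop_Gamma_infty} as a direct implication of Theorem~\ref{thm_lengths_of_divisible_codes}, illustrating only the single case $9=1\cdot 7+1\cdot 6-1\cdot 4$ for $\Delta=4$, while you carry out the same $S_2(r)$-adic expansion check for all listed values. Your sample computations (e.g.\ $33=15+14+12-8$, $1=15+14+12-5\cdot 8$) and the greedy parity rule for extracting the coefficients are all correct.
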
  
For $\Delta\in\{2,4,8\}$ the possible cardinalities of $\Delta$-divisible sets of points over $\F_2$ are completely determined, see e.g.\ \cite{honold2018partial} and 
\cite[Theorem 2]{honold2019lengths}, where it was shown that no binary $8$-divisible projective linear code of effective length $59$ exists. In our context this implies:
\begin{proposition}
  \label{prop_Gamma_1}
  We have $\Gamma_2(2,n)=1$ iff $n\ge 3$, $\Gamma_2(4,n)=1$ iff $n\in\{7,8\}$ or $n\ge 14$, and $\Gamma_2(8,n)=1$ iff $n\in \{15, 16, 30, 31, 32, 45, 46, 47, 48, 49, 50, 51\}$ 
  or $n\ge 60$. 
\end{proposition}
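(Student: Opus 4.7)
The plan is to recognize that $\Gamma_2(\Delta,n)=1$ holds precisely when a $\Delta$-divisible \emph{set} (rather than a proper multiset) of $n$ points exists in some $\PG(v-1,2)$, equivalently when a projective binary $\Delta$-divisible code of effective length $n$ exists. The claim then becomes a direct transcription of the known classification of such projective lengths, and I would split the argument into a non-existence direction and a constructive direction.

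For non-existence, I would first discard every length for which $\Gamma_2(\Delta,n)=\infty$ via Proposition~\ref{prop_Gamma_infty}; this already accounts for most of the gaps in the claimed admissible sets. For the remaining short lengths not excluded this way, namely $n=2$ for $\Delta=2$, $n\in\{4,6\}$ for $\Delta=4$, and $n\in\{8,12,14\}$ for $\Delta=8$, Corollary~\ref{cor_simplex_repetition_special} pins down the unique $\Delta$-divisible multiset of that cardinality to a properly scaled simplex-type object (a double point, $4$-fold point, double line, $8$-fold point, $4$-fold line, or double plane), each of which has $\gamma_1\ge 2$, so these lengths are also ruled out. The remaining intermediate lengths---in particular the block of values inside $[20,59]$ for $\Delta=8$ that is neither killed by Proposition~\ref{prop_Gamma_infty} nor in the admissible set---are precisely those settled by the projective classifications of \cite{honold2018partial} and \cite[Theorem~2]{honold2019lengths}, and I would invoke these as a single citation.

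For existence, at each admissible length I would exhibit an explicit projective $\Delta$-divisible set. For $\Delta=2$ and $n\ge 3$ a single line realizes $n=3$, and disjoint unions in independent ambient summands reach every larger $n$ while preserving both evenness and projectivity. For $\Delta=4$ the primitive building blocks are the Fano plane $\PG(2,2)$ (length $7$) and the affine space $\AG(3,2)$ obtained as the complement of a hyperplane in $\PG(3,2)$ (length $8$); these, together with the further constructions tabulated in \cite{honold2018partial}, cover every $n\ge 14$. For $\Delta=8$ the analogous primitives are $\PG(3,2)$ (length $15$) and $\AG(4,2)$ (length $16$); disjoint sums of these realize the entire listed set $\{15,16,30,31,32,45,46,47,48,49,50,51\}$, and combined with additional constructions from the cited references every $n\ge 60$.

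The genuinely hard step is the isolated non-existence at $n=59$ for $\Delta=8$: this cannot be obtained from any elementary divisibility or semigroup argument and is the substantive content of \cite[Theorem~2]{honold2019lengths}. Once that result is granted, everything else reduces to bookkeeping over Proposition~\ref{prop_Gamma_infty}, Corollary~\ref{cor_simplex_repetition_special}, and the classification of \cite{honold2018partial}.
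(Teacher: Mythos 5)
Your overall strategy is exactly the paper's: the paper offers no self-contained proof of this proposition but derives it from the observation that $\Gamma_2(\Delta,n)=1$ is equivalent to the existence of a projective $\Delta$-divisible binary code of effective length $n$, combined with the complete classification of such lengths in \cite{honold2018partial} and \cite[Theorem 2]{honold2019lengths} (the latter supplying the isolated non-existence at $n=59$). Your reduction, your use of Proposition~\ref{prop_Gamma_infty} and Corollary~\ref{cor_simplex_repetition_special} for the small non-existence cases, and your identification of $n=59$ as the genuinely hard point all match the intended reading.

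However, two of your constructive claims are arithmetically false. First, for $\Delta=8$ you assert that disjoint sums of $\PG(3,2)$ (length $15$) and $\AG(4,2)$ (length $16$) realize the entire set $\{15,16,30,31,32,45,46,47,48,49,50,51\}$; but $15a+16b\in[45,51]$ forces $a+b=3$ and hence a value in $\{45,46,47,48\}$, so $49$, $50$ and $51$ are unreachable. In fact no decomposition of $49$, $50$ or $51$ into two admissible projective lengths exists at all (every splitting such as $49=15+34=16+33=30+19=\dots$ has an inadmissible summand), so these three lengths require the sporadic indecomposable constructions of the cited classification and cannot be waved through as disjoint sums. Second, for $\Delta=2$ a line together with disjoint unions does not reach $n=5$: $5$ is not of the form $3a+4b$, and $5=3+2$ fails because there is no projective even set of size $2$. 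The correct primitive is the projective base $B_n$ of size $n$ in $\PG(n-2,2)$, which is an even set for every $n\ge 3$ (the paper introduces it at the start of Section~\ref{sec_even_doubly_even}) and settles all of $\Delta=2$ in one stroke. Neither slip affects the truth of the proposition, but as written your existence argument has concrete holes at $n=5$ for $\Delta=2$ and at $n\in\{49,50,51\}$ for $\Delta=8$.
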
  
   
We can use the corresponding examples to construct multisets of points of larger divisibility or larger cardinality. If $\cM$ is a $\Delta$-divisible multiset of points
in $\PG(v-1,q)$, then $q\cdot \cM$ is a $q\Delta$-divisible multiset of points in $\PG(v-1,q)$ with cardinality $q\cdot\#\cM$ and maximum point multiplicity $q\cdot\gamma_1(\cM)$. 
Using the decomposition $\F_q^{v_1}\oplus \F_q^{v_2}\cong \F_q^{v_1+v_2}$ we can also combine two $\Delta$-divisible multisets of points $\cM_i$ in $\PG(v_1-1,q)$, where 
$i=1,2$, to a $\Delta$-divisible multiset of points in $\PG(v_1+v_2-1,q)$ with cardinality $\#\cM_1+\#\cM_2$ and maximum point multiplicity $\max\!\left\{\gamma_1(\cM_1),\gamma_1(\cM_2)\right\}$. 
Applied recursively we obtain:
\begin{proposition}$\,$\\[-4.5mm]
  \begin{itemize}
    \item[(i)] We have $\Gamma_2(2,2)=2$, $\Gamma_2(4,n)=2$ for $n\in \{6,10,12,13\}$, and $\Gamma_2(8,n)=2$ for $n\in \{14, 28, 29, 34, 36, 38$, $40, 42, 43, 44, 52, 53, 54, 55, 56, 57, 58, 59\}$.
    \item[(ii)] We have $\Gamma_2(4,n)\le 4$ for $n\in\{4,11\}$ and $\Gamma_2(8,n)\le 4$ for $n\in \{12, 20, 24, 26, 27, 35, 39, 41\}$.   
    \item[(iii)] We have $\Gamma_2(8,n)\le 8$ for $n\in \{8, 22, 23, 37\}$.
   \end{itemize}
\end{proposition}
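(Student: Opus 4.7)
The plan is to verify all three statements by explicit construction, combined in part~(i) with the lower bounds already provided by Proposition~\ref{prop_Gamma_1}. The only tools needed are the two operations just introduced above the proposition: scaling a $\Delta$-divisible multiset by $2$ to obtain a $2\Delta$-divisible multiset with doubled point multiplicities, and taking a direct sum of two $\Delta$-divisible multisets in independent ambient spaces, whose maximum multiplicity is the maximum of the two summands.

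For part~(i), I first observe that the lower bound $\Gamma_2(\Delta,n)\ge 2$ holds in each listed case: Proposition~\ref{prop_Gamma_1} tells us that $\Gamma_2(2,2)\neq 1$, that $\Gamma_2(4,n)\neq 1$ for $n\in\{6,10,12,13\}$, and that $\Gamma_2(8,n)\neq 1$ for each of the eighteen listed values. For the matching upper bound $\Gamma_2(\Delta,n)\le 2$ I would give explicit multisets. The double point $2\cdot\chi_P$ handles $\Gamma_2(2,2)\le 2$. For $\Delta=4$, every even value $n\in\{6,10,12\}$ is realized as $2\cdot\cM$ where $\cM$ is a $2$-divisible set of cardinality $n/2\in\{3,5,6\}$ guaranteed by Proposition~\ref{prop_Gamma_1}; the odd value $n=13$ is realized by taking the disjoint union of a double line and a plane in independent ambient spaces, giving cardinality $6+7=13$ and maximum multiplicity $\max\{2,1\}=2$. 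For $\Delta=8$, every even value $n$ in the list of form $2m$ with $m\ge 14$ or $m\in\{14\}$ is realized by $2\cdot\cM$ where $\cM$ is a projective $4$-divisible set of cardinality $m$ (which exists by Proposition~\ref{prop_Gamma_1}); the odd values $\{29,43,53,55,57,59\}$ are each written as $(n-15)+15$, where the first summand is an even cardinality already handled and the second summand is a solid, whose maximum multiplicity is $1$.

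For part~(ii), only upper bounds are needed. The value $\Gamma_2(4,4)\le 4$ is witnessed by the $4$-fold point $4\cdot \chi_P$, and $\Gamma_2(4,11)\le 4$ by the disjoint union of a $4$-fold point and a plane. For $\Gamma_2(8,n)\le 4$ with $n\in\{12,20,24,26\}$, I would scale by $2$ a $4$-divisible multiset realizing $\Gamma_2(4,n/2)\le 2$, which exists either by part~(i) or by Proposition~\ref{prop_Gamma_1}. For the remaining odd values $n\in\{27,35,39,41\}$ of this list, I would write $n=(n-15)+15$ so that the first summand is one of the multiples of $2$ just handled and the second summand is a solid; the maximum multiplicity is $\max\{4,1\}=4$.

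For part~(iii), the $8$-fold point $8\cdot\chi_P$ witnesses $\Gamma_2(8,8)\le 8$, and the remaining three cases $n\in\{22,23,37\}$ are realized by adding, respectively, a double plane, a solid, or both to this $8$-fold point: $22=8+14$, $23=8+15$ and $37=8+14+15$, each giving maximum multiplicity $8$. No step is genuinely difficult; the only thing to watch is matching each listed cardinality to the right combination of a doubled building block and, when needed, a single plane or solid summand to adjust parity, and verifying that all building blocks used (planes, solids, double lines, double planes, $4$-fold lines, $4$- and $8$-fold points) have the divisibility, cardinality and column multiplicity claimed—which is immediate from Corollary~\ref{cor_simplex_repetition_special}.
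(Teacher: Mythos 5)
Your proof is correct and takes essentially the same route as the paper, which obtains the proposition by recursively applying the doubling and direct-sum constructions to the building blocks from Corollary~\ref{cor_simplex_repetition_special} and the projective examples guaranteed by Proposition~\ref{prop_Gamma_1}; you merely make the case-by-case decompositions explicit. (The only blemish is the phrase ``$m\ge 14$ or $m\in\{14\}$'', which should read ``$m=7$ or $m\ge 14$'' so as to cover $n=14$; the construction is unaffected since $\Gamma_2(4,7)=1$.)
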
  
The main goal of the remaining part of this paper is to show that the upper bounds in (ii) and (iii) are indeed sharp, see Theorem~\ref{main_thm}. We remark 
that the constructions used in \cite{kiermaier2020lengths} imply $\Gamma_q(q^r,n)\le r$ whenever $\Gamma_q(q^r,n)\neq \infty$ and $r\in\N$.  

\begin{theorem}
  \label{main_thm} $\,$\\[-4.5mm]
  \begin{itemize}
    \item We have $\Gamma_2(2,1)=\infty$, $\Gamma_2(4,n)=\infty$ for $n\in \{1,2,3,5,9\}$, and $\Gamma_2(8,n)=\infty$ for $n\in \{1, 2, 3, 4, 5, 6, 7, 9,$ $10, 11, 13, 17, 18, 19, 21, 25, 33\}$. 
    \item  We have $\Gamma_2(2,n)=1$ iff $n\ge 3$, $\Gamma_2(4,n)=1$ iff $n\in\{7,8\}$ or $n\ge 14$, and $\Gamma_2(8,n)=1$ iff $n\in \{15, 16, 30, 31, 32, 45, 46, 47, 48, 49, 50, 51\}$ 
           or $n\ge 60$.
    \item We have $\Gamma_2(2,2)=2$, $\Gamma_2(4,n)=2$ for $n\in \{6,10,12,13\}$, and $\Gamma_2(8,n)=2$ for $n\in \{14, 28, 29, 34, 36, 38$, $40, 42, 43, 44, 52, 53, 54, 55, 56, 57, 58, 59\}$.
    \item We have $\Gamma_2(4,n)= 4$ for $n\in\{4,11\}$ and $\Gamma_2(8,n)= 4$ for $n\in \{12, 20, 24, 26, 27, 35, 39, 41\}$.   
    \item We have $\Gamma_2(8,n)= 8$ for $n\in \{8, 22, 23, 37\}$.
   \end{itemize}
\end{theorem}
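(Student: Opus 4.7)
The theorem's first three bullets are immediate from results already proved. Bullet~1 is Proposition~\ref{prop_Gamma_infty}, bullet~2 is Proposition~\ref{prop_Gamma_1}, and bullet~3 combines the upper-bound constructions in part~(i) of the unnumbered proposition preceding Theorem~\ref{main_thm} with the matching lower bound $\Gamma_2(\Delta,n)\ge 2$, which is immediate from bullet~2 because the lengths listed in bullet~3 are precisely those excluded from bullet~2. The remaining work is to promote the inequalities in parts~(ii) and~(iii) of that proposition to equalities, i.e., to establish the lower bounds in bullets~4 and~5. I would carry this out in Sections~\ref{sec_even_doubly_even} and~\ref{sec_8_div} and then assemble Theorem~\ref{main_thm}.

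For the doubly-even part of bullet~4, the goal is $\Gamma_2(4,n)\ge 4$ for $n\in\{4,11\}$. The case $n=4$ is immediate from Corollary~\ref{cor_simplex_repetition_special}. For $n=11$, I would assume a $4$-divisible multiset $\cM$ with $\#\cM=11$ and $\gamma_1(\cM)\le 3$, and use the congruence $\cM(H)\equiv 3\pmod 4$ together with Lemma~\ref{lemma_divisibility_hyperplane} to see that every $\cM|_H$ is a $2$-divisible multiset of cardinality $\cM(H)\in\{3,7\}$ with $\gamma_1(\cM|_H)\le 3$. The classification of $2$-divisible multisets of small $\gamma_1$ obtained in Section~\ref{sec_even_doubly_even} then pins down the combinatorial type of each nonempty hyperplane section, the standard equations~(\ref{se1})--(\ref{se5}) give a linear system for the spectrum $(a_i)$, and projection through a heavy point via Lemma~\ref{lemma_projection} reduces to a smaller $4$-divisible instance that can be ruled out via Proposition~\ref{prop_Gamma_infty} and Corollary~\ref{cor_simplex_repetition_special}.

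For the triply-even targets in bullets~4 and~5, I would use the same pattern one level higher. For any $8$-divisible $\cM$ of cardinality $n$ with bounded $\gamma_1$, Lemma~\ref{lemma_divisibility_hyperplane} yields a $4$-divisible $\cM|_H$ with $\cM(H)\equiv n\pmod 8$ and $\gamma_1(\cM|_H)\le \gamma_1(\cM)$. The classification of doubly-even multisets of small $\gamma_1$ obtained in Section~\ref{sec_even_doubly_even} drastically restricts the admissible values of $\cM(H)$ and the shape of each nonempty hyperplane section, while the standard equations~(\ref{se1})--(\ref{se5}) become a linear system for the spectrum. In favourable cases the spectrum is forced and each section is identified by Corollary~\ref{cor_simplex_repetition_special} or Proposition~\ref{prop_multiples_of_affine_spaces}; in the remaining cases, projection through a point of maximum multiplicity via Lemma~\ref{lemma_projection} produces a strictly smaller $8$-divisible instance, enabling induction on~$n$.

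The main obstacle will be the spectral bookkeeping for the mid-range cases, particularly $n\in\{22,23,37\}$ in bullet~5 and $n\in\{26,27,35,39,41\}$ in bullet~4, where several spectra are a~priori consistent with the standard equations and with the admissible types of hyperplane sections. For each such~$n$ and each putative value of $\gamma_1$, one has to enumerate the surviving spectra and then exclude each by cross-checking the permitted geometric types of hyperplane sections against the point-multiplicity constraints recorded by~(\ref{se3}) and~(\ref{se4}). It is this case analysis, rather than any single conceptual step, that occupies Sections~\ref{sec_even_doubly_even} and~\ref{sec_8_div}.
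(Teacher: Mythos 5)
Your handling of the first three bullets, and of the degenerate cases $n=4$ for $\Gamma_2(4,\cdot)$ and $n\in\{8,12\}$ for $\Gamma_2(8,\cdot)$, is correct and coincides with the paper: these follow from Proposition~\ref{prop_Gamma_infty}, Proposition~\ref{prop_Gamma_1}, the constructions in the unnumbered proposition, and Corollary~\ref{cor_simplex_repetition_special}. The general toolkit you describe for the remaining lower bounds (restrict to hyperplanes via Lemma~\ref{lemma_divisibility_hyperplane}, classify the sections, solve the standard equations for the spectrum, project) is also the one the paper uses.

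However, the descent mechanism you propose for the hard cases has a genuine gap. Projection through a point $Q$ of maximum multiplicity does produce a strictly smaller $8$-divisible multiset, but by Lemma~\ref{lemma_projection} its maximum point multiplicity equals $\max_{L\ge Q}\cM(L)-\cM(Q)$, which can be as large as $2\gamma_1(\cM)$; so the inductive hypothesis (a bound on $\gamma_1$) need not apply to $\cM_Q$ and the induction on $n$ does not close. Concretely, for $n=41$ with $\gamma_1(\cM)\le 3$, projecting through a double point yields cardinality $39$ but a priori only $\gamma_1(\cM_{P_2})\le 4$, which is \emph{not} excluded by $\Gamma_2(8,39)\ge 4$; the paper must first rule out the offending line configurations, which is the entire content of Lemmas~\ref{lemma_4_div_card_17_special_2}, \ref{lemma_4_div_card_17_special_3}, \ref{lemma_8_div_card_41_aux_1} and~\ref{lemma_8_div_card_41_aux_2}, including an explicit coordinatization of a $4$-divisible multiset of cardinality $17$. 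Moreover, for $n\in\{37,41\}$ the decisive descent is on the dimension $k$ of the span (projecting through a point of multiplicity \emph{zero} lying on no heavy line), not on $n$, while $n\in\{20,24,26,27,35\}$ are settled by direct hyperplane arguments rather than induction. Finally, the ``classification of doubly-even multisets of small $\gamma_1$'' that your hyperplane analysis invokes is not available off the shelf: it is itself the bulk of Section~\ref{sec_even_doubly_even} (cardinalities $10$ through $17$, starting with Proposition~\ref{prop_4_div_card_10}), and your proposal supplies no argument for it. So while the strategy is the right one, the proof of bullets~4 and~5 beyond the trivial cases is not actually given.
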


Another tool that we can use in the task of proving Theorem~\ref{main_thm} is the classification of $\Delta$-divisible codes spanned by codewords of weight
$\Delta$ \cite{kiermaier2020classification}. An exemplary implication is:
\begin{lemma}
  Let $\mathcal{C}$ be a binary code with non-zero weights in $\{8,16,24\}$ that is spanned by codewords of weight $8$. Then, we have
  $$
    A_8\in\{0,1,2,3,4,6,7,8,9,10,11,13,14,15,16,17,18,21,22,25,29,30,31,33,37,45\}
  $$
  for the number $A_8$ of the number of codewords of weight $8$ in $C$ (including the case that $C$ is empty). 
\end{lemma}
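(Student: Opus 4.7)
The plan is to deduce the lemma by invoking the classification of $\Delta$-divisible binary codes spanned by codewords of weight $\Delta$ from \cite{kiermaier2020classification}, specialized to $\Delta=8$ and with the extra restriction that every non-zero weight lies in $\{8,16,24\}$. I would first pass to the geometric language of multisets of points from Section~\ref{sec_preliminaries}, so that $\cC$ corresponds to an $8$-divisible multiset $\cM$, each weight-$8$ codeword to a hyperplane $H$ with $\cM(H)=\#\cM-8$, and the spanning condition to the requirement that these hyperplanes intersect trivially.

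Next, the weight bound $24$ imposes strong constraints on the pairwise, triple, and quadruple linear combinations of the weight-$8$ generators, since each such combination must still have weight in $\{0,8,16,24\}$; this severely restricts the possible overlap patterns of the eight-element supports. Invoking the classification theorem of \cite{kiermaier2020classification} in this regime realizes $\cC$ as a combination of a short list of indecomposable sub-codes, and the weight bound sharply limits which combinations may coexist. For each admissible configuration one reads off $A_8$ explicitly: for instance, the extremal value $A_8=45$ arises from combining the first-order Reed--Muller code $\operatorname{RM}(1,4)$ (contributing $A_8=30$) with a simplex-type summand of maximum weight $8$ (contributing $A_8=15$), while smaller values come from simpler or smaller configurations and $A_8=0$ corresponds to the empty code.

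The main obstacle is the bookkeeping of the case analysis: one has to enumerate every indecomposable building block permitted by the classification under the weight restriction, list all valid combinations, and in particular justify the numerous gaps in the asserted set (such as $5$, $12$, $19$--$20$, $23$--$24$, $26$--$28$, $32$, $34$--$36$, and $38$--$44$). Ensuring that no stray configuration slips through requires the full precision of the classification from \cite{kiermaier2020classification}, and this careful case-by-case verification is the heart of the proof.
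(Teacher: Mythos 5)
Your proposal follows essentially the same route as the paper: invoke the classification of $\Delta$-divisible codes spanned by weight-$\Delta$ codewords from \cite{kiermaier2020classification}, decompose $\cC$ into indecomposable summands, use the weight cap of $24$ to bound which summands can be juxtaposed (since weights add across summands, at most three blocks of maximum weight $8$, or one of maximum weight $16$ with one of maximum weight $8$, etc.), and read off $A_8$ for each admissible combination -- and you correctly locate the extremal value $45$ as $\operatorname{RM}(1,4)$ plus the $[15,4]$ simplex code. What remains is exactly the bookkeeping the paper carries out, namely writing down the ten indecomposable weight enumerators and multiplying out all admissible products; your remark about overlap patterns of supports is unnecessary once one uses that weights are additive across direct summands.
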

\begin{proof}
  We apply the classification of \cite{kiermaier2020classification}: 
  The possible weight enumerators of the indecomposable subcodes are given by $1+1x^8$, $1+3x^8$, $1+7x^8$, $1+15x^8$, $1+6x^8+1x^{16}$, $1+10x^8+5x^{16}$, 
  $1+14x^8+1x^{16}$, $1+30x^8+1x^{16}$, $1+15x^8+15x^{16}+1x^{24}$, and $1+21x^8+35x^{16}+7x^{24}$. Combining two such blocks with maximum weight $8$ gives the 
  further possibilities $1+2x^8+1x^{16}$, $1+4x^8+3x^{16}$, $1+8x^8+7x^{16}$, $1+2x^8+1x^{16}$, $1+16x^8+15x^{16}$, $1+2x^8+1x^{16}$, $1+6x^8+9x^{16}$, 
  $1+10x^8+21x^{16}$, $1+18x^8+45x^{16}$, $1+14x^8+49x^{16}$, $1+22x^8+105x^{16}$, and $1+30x^8+225x^{16}$. Combining these or a block with maximum weight $16$ 
  with a block with maximum weight $16$ further possibilities $1+7x^8+7x^{16}+1x^{24}$, $1+9x^8+19x^{16}+3x^{24}$, $1+13x^8+43x^{16}+7x^{24}$, $1+21x^8+91x^{16}+15x^{24}$, 
  $1+11x^8+15x^{16}+5x^{24}$, $1+7x^8+7x^{16}+1x^{24}$, $1+13x^8+35x^{16}+15x^{24}$, $1+17x^8+75x^{16}+35x^{24}$, $1+25x^8+155x^{16}+75x^{24}$, $1+15x^8+15x^{16}+1x^{24}$, 
  $1+17x^8+43x^{16}+3x^{24}$, $1+21x^8+99x^{16}+7x^{24}$, $1+29x^8+211x^{16}+15x^{24}$, $1+31x^8+31x^{16}+1x^{24}$, $1+33x^8+91x^{16}+3x^{24}$, $1+37x^8+211x^{16}+7x^{24}$, 
  and $1+45x^8+451x^{16}+15x^{24}$.         
\end{proof}

\section{Classification results for even and doubly-even multisets of points}
\label{sec_even_doubly_even}

A few classification results are already stated in Corollary~\ref{cor_simplex_repetition_special}. Here the characterized multisets of points are given by $\lambda\cdot\chi_K$ for some 
subspace $K$. For $n\ge 3$ another construction of a spanning even set of $n$ points in $\PG(n-2,2)$ is given by a so-called \emph{projective base} $B_n$ of size $n$, i.e., a set of $n$ points 
such that each $n-1$ points span an $(n-1)$-space. 

\begin{proposition}
  \label{prop_2_div_card_5}
  Let $\cM$ be a $2$-divisible multiset of points in $\PG(v-1,2)$ with cardinality $5$. Then either $\cM=\chi_L+2\cdot\chi_P$, where $L$ is a line and $P$ a point, or 
  $\cM$ is the characteristic function of a projective base $B_5$ of size $5$.
\end{proposition}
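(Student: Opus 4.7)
The plan is to classify $\cM$ by the dimension $k$ of its linear span; after passing to this span we may assume that $\cM$ is spanning in $\PG(k-1,2)$. The key tool is the binary reformulation of $2$-divisibility: $\cM$ is $2$-divisible iff the all-ones vector lies in the dual of $C(\cM)$, equivalently iff
$$\sum_P \cM(P)\cdot P = 0 \quad \text{in }\F_2^k,$$
identifying each projective point with its unique nonzero representative in $\F_2^k$. First I would rule out $k=1$, in which $\cM = 5\chi_P$ yields a $[5,1]_2$ code of odd weight, and $k=5$, in which the five points in $\operatorname{supp}(\cM)$ form a basis and hence sum to the nonzero all-ones vector. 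Thus $k\in\{2,3,4\}$.

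The main step is an enumeration of the partitions $5=\sum_P \cM(P)$ of the multiplicities, reading off the structure from the displayed relation with coefficients reduced modulo $2$. For each of the partitions $(5)$, $(4,1)$, $(3,2)$, $(2,2,1)$ the relation collapses to an equation of the form $P=0$, which is impossible. For the partition $(3,1,1)$ the relation becomes $P_1+P_2+P_3=0$, so the three points lie on a common line $L$, $k=2$, and $\cM = \chi_L + 2\cdot\chi_{P_1}$ with $P_1\in L$. For the partition $(2,1,1,1)$ the relation becomes $P_2+P_3+P_4=0$; the three singletons are then exactly the three points of a line $L$, the doubled point $P_1$ is forced off $L$ by distinctness, $k=3$, and again $\cM = \chi_L + 2\cdot\chi_{P_1}$. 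Finally the partition $(1,1,1,1,1)$ gives five distinct points with $\sum_{i=1}^5 P_i = 0$, and from $P_5 = P_1+\cdots+P_4$ we already obtain $k\le 4$.

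The least routine step is dispatching the $(1,1,1,1,1)$ case. The possibility $k=2$ is excluded immediately, since a line contains only three points. For $k=3$ the five points sit inside $\PG(2,2)$, and using that all seven points of the Fano plane sum to zero we see that the sum of the five chosen points equals the sum of the two omitted points; this can only vanish if the two omitted points coincide, which they do not. Hence $k=4$, and the symmetric relation $\sum_{i=1}^5 P_i = 0$ shows that any four of the points span the ambient $4$-space and are therefore linearly independent, which is the defining property of a projective base $B_5$. Collecting the three surviving cases yields exactly the two alternatives in the proposition.
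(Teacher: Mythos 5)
Your proof is correct, but it follows a genuinely different route from the paper's. The paper first disposes of any point $P$ of multiplicity at least $2$ by observing that $\cM-2\cdot\chi_P$ is again $2$-divisible of cardinality $3$ and invoking Proposition~\ref{prop_simplex_repetition_special} to identify it as a line; in the remaining case $\gamma_1(\cM)=1$ it uses the standard equations (\ref{se1})--(\ref{se5}) to pin down $k=4$ and then excludes collinear triples and coplanar quadruples to force a projective base. You instead exploit the fact that a binary code is even iff the all-ones vector lies in its dual, i.e.\ iff the columns of a generator matrix sum to zero in $\F_2^k$, and then enumerate the seven partitions of $5$; the mod-$2$ reduction of the column-sum relation immediately kills $(5)$, $(4,1)$, $(3,2)$, $(2,2,1)$ and reads off the line or projective-base structure in the remaining three cases. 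Your argument is more self-contained (it needs neither the prior classification of cardinality $3$ nor the counting identities) and the bookkeeping is transparent, but it is tied to $q=2$ and $\Delta=2$, where divisibility reduces to a single linear condition on the columns; the paper's subtraction-plus-standard-equations template is the one that scales to the larger divisibilities and cardinalities treated later. All of your individual steps check out, including the Fano-plane argument excluding $k=3$ in the $(1,1,1,1,1)$ case and the observation that $\sum_{i=1}^{5}P_i=0$ together with $k=4$ makes every four of the five points linearly independent.
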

\begin{proof}
  If there exists a point $P$ with $\cM(P)\ge 2$, then $\cM-2\cdot\chi_P$ is also $2$-divisible with cardinality $3$, so that we can apply Proposition~\ref{prop_simplex_repetition_special}. 
  Thus, we can assume $\gamma_1(\cM)=1$ in the following so that the standard equations yield $k=4$ for the dimension of the span of $\cM$. Since $\gamma_1(\cM)=1$ no three points 
  form a line $L$ (since otherwise $\cM-\chi_L$ would be the characteristic function of a double point). Finally, $2$-divisibility implies that no four points can span a plane, which is a
  hyperplane in our situation.   
\end{proof}

\begin{proposition}
  \label{prop_2_div_card_4}
  Let $\cM$ be a $2$-divisible multiset of points in $\PG(v-1,2)$ with cardinality $4$. Then either $\cM=2\cdot \chi_{P_1}+2\cdot \chi_{P_2}$ for two points $P_1,P_2$ (that may also be 
  equal) or there exists a plane $E$ and line $L\le E$ with $\cM=\chi_E-\chi_L=\chi_{E\backslash L}$. 
\end{proposition}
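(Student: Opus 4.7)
The plan is to split according to the maximum point multiplicity $\gamma_1(\cM)$ and reduce each case to Corollary~\ref{cor_simplex_repetition_special} or to a small geometric classification inside a projective plane.

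First I would handle $\gamma_1(\cM)\ge 2$. Pick a point $P_1$ with $\cM(P_1)\ge 2$. Then $\cM':=\cM-2\chi_{P_1}$ is a $2$-divisible multiset (as the difference of two such) of cardinality $2$. By the first bullet of Corollary~\ref{cor_simplex_repetition_special}, $\cM'=2\chi_{P_2}$ for some point $P_2$, giving $\cM=2\chi_{P_1}+2\chi_{P_2}$ (with $P_1=P_2$ permitted, covering the case $\gamma_1=4$). This also rules out $\gamma_1=3$: in that case $\cM'$ would have multiplicity $1$ at two distinct points and could not equal $2\chi_{P_2}$.

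The main work is the projective case $\gamma_1(\cM)=1$, i.e.\ $\cM$ is a set of $4$ points. Let $k$ be the dimension of the span. We need $k=3$: $k\le 2$ is ruled out because a $2$-space contains only $3$ points, while $k\ge 4$ is ruled out because any $3$ of the $4$ points then span a hyperplane $H$ of the span not containing the fourth point, giving $\cM(H)=3$, contradicting $2$-divisibility. Hence $\cM$ is a $4$-set inside some plane $E$; set $L:=E\setminus \operatorname{supp}(\cM)$, a set of $3$ points of $E$. The goal is to show that $L$ is a line of $E$, which by $\chi_{E\backslash L}=\chi_E-\chi_L$ will finish the proof.

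For the final step I would translate $2$-divisibility into a condition on $L$. Every hyperplane $H$ of the ambient space that does not contain $E$ cuts $E$ in a line $M$, and $2$-divisibility requires $|{\operatorname{supp}(\cM)}\cap M|$ to be even; since $|M|=3$, this is equivalent to $|L\cap M|$ being odd, i.e.\ $|L\cap M|\in\{1,3\}$ for each of the $7$ lines $M$ of $E$. Letting $x$ be the number of lines $M$ with $|L\cap M|=3$ and $y$ the number with $|L\cap M|=1$, counting point-line incidences in the Fano plane $E$ via the three points of $L$ yields $3x+y=3\cdot 3=9$ together with $x+y=7$, forcing $x=1$ and $y=6$. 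Thus exactly one line of $E$ is contained in $L$, and since $|L|=3$ this line equals $L$. I expect this last incidence-counting argument inside $\PG(2,2)$ to be the only non-routine step; everything else is a clean reduction to Corollary~\ref{cor_simplex_repetition_special} and an elementary hyperplane parity check.
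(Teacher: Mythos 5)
Your proof is correct, but it takes a genuinely different route from the paper. The paper's entire proof is a one-line reduction: the statement is the case $r=1$ of Proposition~\ref{prop_multiples_of_affine_spaces}, whose proof runs through the standard equations (forcing $a_0=1$ and $a_{2^r}=2^k-2$) and Lemma~\ref{lemma_lower_bound_space_mult} to pin down the point multiplicities outside the unique empty hyperplane. You instead split on $\gamma_1(\cM)$: for $\gamma_1\ge 2$ you subtract a double point and invoke Corollary~\ref{cor_simplex_repetition_special}, which cleanly handles $\cM=2\chi_{P_1}+2\chi_{P_2}$ and excludes $\gamma_1=3$; for $\gamma_1=1$ you force the span to be a plane and then run an incidence count in the Fano plane ($3x+y=9$, $x+y=7$, so exactly one line of $E$ is disjoint from the support). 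Both arguments are sound — your hyperplane-parity step implicitly uses that every line of $E$ is cut out by some ambient hyperplane avoiding the rest of $E$, which is standard and fine. What each approach buys: yours is elementary and self-contained, needing only the cardinality-$2$ and cardinality-$3$ classifications plus a hand count in $\PG(2,2)$; the paper's is shorter here because the general $2^r$-divisible, cardinality-$2^{r+1}$ classification is already established and settles all $r$ uniformly, so re-deriving the $r=1$ case by hand would be redundant in context.
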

\begin{proof}
  This is a special case of Proposition~\ref{prop_multiples_of_affine_spaces}.
\end{proof}

\begin{proposition}
  \label{prop_2_div_card_6}
  Let $\cM$ be a $2$-divisible multiset of points in $\PG(v-1,2)$ with cardinality $6$. Then either $\cM$ contains a point of multiplicity at least two, $\cM$ is the characteristic function 
  of a projective base $B_6$ of size $6$ or $\cM=\chi_{L_1}+\chi_{L_2}$ for two (disjoint) lines $L_1,L_2$.
\end{proposition}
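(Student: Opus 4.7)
The plan is as follows. First reduce to $\gamma_1(\cM)=1$; otherwise some point has multiplicity at least two and we are in the first alternative. With $\cM$ now a set, I analyze it by $k := \dim\langle\cM\rangle$. Having six distinct points forces $[k]_2 = 2^k-1 \ge 6$, hence $k\ge 3$, while $C(\cM)$ is a $2$-divisible code of length $6$ and is therefore contained in the $[6,5]$ even-weight subcode of $\F_2^6$, giving $k\le 5$. The case $k=3$ puts $\cM$ inside $\PG(2,2)$ as the complement of a single Fano point $P$; the four lines avoiding $P$ then carry multiplicity $3$, contradicting $2$-divisibility.

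For $k=4$ the standard equations (\ref{se1})--(\ref{se5}), with $a_i = 0$ for odd $i$ and $a_6 = 0$ by spanning, solve uniquely to $(a_0,a_2,a_4,a_6)=(0,9,6,0)$. I then count triples of points of $\cM$ weighted by the number of hyperplanes containing them: in $\PG(3,2)$ a collinear triple lies in $[2]_2=3$ hyperplanes and a non-collinear one in exactly $1$, so letting $t$ denote the number of collinear triples,
\[
  3t + (20-t) \;=\; \sum_{H} \binom{\cM(H)}{3} \;=\; 6\cdot\binom{4}{3} \;=\; 24,
\]
giving $t=2$. Since $\cM$ is a set, each collinear triple is a whole line contained in $\cM$, so there are two lines $L_1,L_2$ with $\cM(L_i)=3$. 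As $|L_1|+|L_2|=6=\#\cM$, they must be disjoint and $\cM=\chi_{L_1}+\chi_{L_2}$.

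For $k=5$ the standard equations give $(a_0,a_2,a_4,a_6)=(1,15,15,0)$. The analogous count---now with a line in $[3]_2=7$ hyperplanes of $\PG(4,2)$ and a non-collinear triple in $[2]_2=3$---yields $7t + 3(20-t) = 15\binom{4}{3} = 60$, forcing $t=0$: no three points of $\cM$ are collinear. To conclude $\cM$ is a projective base $B_6$ I check the defining property directly: if some five of the six points lay in a hyperplane $H$, then $\cM(H)\ge 5$, and by evenness $\cM(H)=6$, contradicting $k=5$. Hence every five points span $\PG(4,2)$, which is exactly the definition of $B_6$.

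I expect the only delicate step to be pinning down the admissible dimensions (handled via the even-weight-subcode observation together with the Fano inspection for $k=3$) and the two triple-counts; once the spectrum is known in each case, the geometric identification of $\cM$ falls out immediately.
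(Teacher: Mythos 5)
Your argument is correct in substance but takes a genuinely different route from the paper. The paper first splits on whether the support contains a full line: if so, it subtracts $\chi_L$ and invokes Proposition~\ref{prop_simplex_repetition_special} on the remaining $2$-divisible multiset of cardinality $3$; if not, it argues geometrically (a cap argument forces $k\ge 4$, a plane with four points is excluded for $k=4$, and bounds on plane and solid multiplicities force a projective base for $k=5$). You instead fix the dimension, solve the standard equations for the full spectrum in each case, and then use a third-moment identity $\sum_H \binom{\cM(H)}{3}=[k-2]_2\,t+[k-3]_2\,(20-t)$ to count collinear triples directly. This is a nice self-contained alternative: it avoids the recursive appeal to the cardinality-$3$ classification and makes the dichotomy between the two non-degenerate outcomes ($t=2$ versus $t=0$) completely mechanical. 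The price is that you need the extra counting identity, which is not among the paper's standard equations (\ref{se1})--(\ref{se5}), though it is proved the same way; all your numerical values ($(a_0,a_2,a_4,a_6)=(0,9,6,0)$ for $k=4$ and $(1,15,15,0)$ for $k=5$, and the incidence counts $[2]_2=3$, $[1]_2=1$, $[3]_2=7$) check out.

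One step is under-justified: in the case $k=4$ you conclude from ``$|L_1|+|L_2|=6=\#\cM$'' that $L_1$ and $L_2$ are disjoint. That inference does not follow from cardinalities alone, since $\cM\ge\chi_{L_1}$ and $\cM\ge\chi_{L_2}$ only give $\#\cM\ge|L_1\cup L_2|$, which is $5$ if the lines meet; the configuration of two concurrent lines plus a sixth point off both is not excluded by counting. The fix is immediate from the spectrum you already computed: if $L_1\cap L_2\ne\emptyset$, the plane $E=\langle L_1,L_2\rangle$ is a hyperplane of $\PG(3,2)$ with $\cM(E)\ge 5$, contradicting $a_i=0$ for all $i\ge 5$. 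With that one line added, the proof is complete. (Your $t=0$ computation in the case $k=5$ is actually redundant, since the hyperplane argument alone already establishes the defining property of a projective base, but it does no harm.)
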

\begin{proof}
  If there is no point of multiplicity at least two, then we have $\gamma_1(\cM)=1$, which we assume in the following. If $\cM$ contains three points forming a line $L$, then $\cM-\chi_L$ is 
  also $2$-divisible of cardinality $3$, so that we can apply Proposition~\ref{prop_simplex_repetition_special}. In the remaining part we can assume that each three points span a plane and 
  denote the dimension of the span of $\cM$ by $k$. Since each line contains at most two points of $\cM$, we have $k\ge 4$. If $k=4$, then there has to be a plane $E$ containing four points. 
  Since no three points of $\cM|_E$ form a line, there exists a line $L\le E$ such that $\cM|_E=\chi_E-\chi_L=\chi_{E\backslash L}$. However, then $\cM|_E$ is $2$-divisible,
  c.f.~Proposition~\ref{prop_2_div_card_4}, and $\cM-\cM|_E=2\cdot \chi_P$ for a suitable point $P$. Thus, it remains to consider the case $k=5$. Here $2$-divisibility implies $\cM(E)\le 3$ 
  for every plane $E$ since otherwise $\cM(\langle E,P\rangle)=6$ for each point $P\notin E$ with $\cM(P)=1$, which contradicts $k=5$. Moreover, no five points can span a solid, which is a 
  hyperplane for $k=5$.  
\end{proof}
We remark that in the case where $\cM$ contains a point $P$ of multiplicity at least two we can apply Proposition~\ref{prop_2_div_card_4} to $\cM-2\cdot\chi_P$.

\begin{proposition}
  \label{prop_2_div_card_7}
  Let $\cM$ be a $2$-divisible multiset of points in $\PG(v-1,2)$ with cardinality $7$. Then either $\cM$ contains a point of multiplicity at least two, there exists a line $L$ such that 
  $\cM-\chi_L\ge 0$, or $\cM$ is the characteristic function of a projective base $B_7$ of size $7$.
\end{proposition}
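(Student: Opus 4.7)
The plan is to set aside the first two alternatives of the statement and show that the remaining case forces a $B_7$. We may assume $\gamma_1(\cM)=1$ (else case~(a)) and $\cM(L)\le 2$ for every line $L$ (else the three points on a line with $\cM(L)=3$ give $\cM-\chi_L\ge 0$, case~(b)). So $\cM$ is a set of $7$ points with no three collinear; let $k$ denote the dimension of its span, and observe that $\#\cM=7$ together with $2$-divisibility forces $\cM(H)$ to be odd for every hyperplane $H$. The goal is to prove $k=6$, after which the oddness constraint gives the projective-base property immediately.

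Small $k$ is ruled out by direct dimension counts. For $k\le 2$ the ambient space has fewer than $7$ points. For $k=3$, $\cM$ would be the whole Fano plane and hence contain lines. For $k=4$, no three collinear gives $\cM(H)\le 4$ for every Fano-plane hyperplane $H$, so odd $\cM(H)\in\{1,3\}$; the standard equations~(\ref{se1})--(\ref{se2}) become $a_1+a_3=15$ and $a_1+3a_3=49$, yielding $a_3=17>15$. For $k\ge 7$ the seven points are linearly independent, so any six of them span a hyperplane $H$ with $\cM(H)=6$, which is even. The case $k=6$ is immediate: oddness together with $\cM(H)<\#\cM=7$ forces $\cM(H)\le 5$, so no hyperplane contains six points of $\cM$, which means every six of the seven span the full ambient $6$-space, precisely the $B_7$ condition.

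The hard part is ruling out $k=5$. I project through any $P\in\operatorname{supp}(\cM)$; by Lemma~\ref{lemma_projection} the image $\cM_P$ is a $2$-divisible set of cardinality $6$ in $\PG(3,2)$, so Proposition~\ref{prop_2_div_card_6} leaves only the options ``$B_6$'' or ``two disjoint lines,'' and the former fails because a $B_6$ requires a $5$-dimensional span. Hence $\cM_P=\chi_{L_1}+\chi_{L_2}$ with disjoint lines $L_1,L_2$ in $\PG(3,2)$, whose preimages are planes $E_1,E_2$ through $P$ satisfying $E_1\cap E_2=\langle P\rangle$ and $\dim(E_1+E_2)=5$; a short projection count gives $\cM(E_i)=\cM_P(L_i)+\cM(P)=4$, and then inclusion--exclusion yields $\cM(E_1\cup E_2)=7=\#\cM$, so $\operatorname{supp}(\cM)\subseteq E_1\cup E_2$ and $\cM|_{E_i}$ is a $B_4$ in the Fano plane $E_i$. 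Finally, for any hyperplane $H$ of the $5$-dimensional ambient space with $P\notin H$ (there are $16$ such), $H\cap E_i$ is a line of $E_i$ avoiding $P$ and hence meets the four-point frame $\cM|_{E_i}$ in either $0$ or $2$ points; combined with $H\cap E_1\cap E_2=\{0\}$, inclusion--exclusion gives $\cM(H)\in\{0,2,4\}$, contradicting oddness. This projection-plus-frame analysis is the step I expect to be the main obstacle; the other cases reduce to a dimension count or the standard equations.
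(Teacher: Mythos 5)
Your proof is correct, and for the decisive case $k=5$ it takes a genuinely different route from the paper's. The paper first bounds every plane multiplicity by $3$ (a plane with $\cM(E)\ge 4$ would carry a quadrilateral $\chi_{E\backslash L}$, whose $2$-divisible complement of cardinality $3$ must be a line, landing in the second alternative of the statement), rules out $k=4$ via the standard equations, and for $k=5$ locates a solid $S$ with $\cM(S)=5$, recognizes $\cM|_S$ as a $B_5$, and obtains the contradiction that $\cM-\cM|_S$ is a double point. You instead project through a support point, invoke Proposition~\ref{prop_2_div_card_6} to identify the projection as two disjoint lines, lift these to two planes meeting exactly in the projection point, and read off an even multiplicity on each of the $16$ hyperplanes avoiding that point --- a clean parity contradiction. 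Both arguments are sound; the paper's stays within its subtraction/classification toolkit and needs the plane bound $\cM(E)\le 3$ anyway for its $k=4$ step, while yours showcases the projection technique of Lemma~\ref{lemma_projection}, relies on the just-established cardinality-$6$ classification, and gets by with the weaker arc bound $\cM(E)\le 4$. You also treat $k\le 3$ and $k=7$ explicitly, which the paper leaves implicit.
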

\begin{proof}
  W.l.o.g.\ we assume $\gamma_1(\cM)=1$ and $\cM(L)\le 2$ for every line $L$. If there would be a plane $E$ with $\cM(E)\ge 4$, then there would be a line $L\le E$ such that 
  $\cM|_E=\chi_{E\backslash L}$. However, in this case $\cM|_E$ is $2$-divisible and $\cM-\cM|_E$ would be $2$-divisible with cardinality $3$, which is possible for the characteristic function 
  of a line only. Thus, we assume $\cM(E)\le 3$ for every plane and that each three points of $\cM$ span a plane. So, we have $k\ge 5$ for the dimension of the span of $\cM$ since 
  the standard equations do not have a solution with $a_5=0$ for $k=4$. For $k=5$ there would be a solid $S$ with $\cM(S)=5$. Noting that $\cM(L)\le 2$ for each line $L\le S$ and 
  $\cM(E)\le 3$ for each plane $E\le S$ we conclude that $\cM|_S$ would be a projective base of size $5$, so that $\cM-\cM|_S=2\cdot \chi_P$ for a suitable point $P$, 
  which contradicts our assumption. Thus, it remains to consider the case $k=6$. Here we have $\cM(S)\le 4$ for each solid since otherwise 
  $\cM(\langle S,P\rangle)=7$ for each point $P\notin S$ with $\cM(P)=1$. From $2$-divisibility we conclude $\cM(H)\le 5$ for each hyperplane $H$, so that 
  $\cM$ has to be the characteristic function of a projective base of size $7$.  
\end{proof}
In the case where $\cM$ contains a point $P$ of multiplicity at least two we can apply Proposition~\ref{prop_2_div_card_5} to $\cM-2\cdot\chi_P$. If a line $L$ with 
$\cM-\chi_L\ge 0$ exists, then we can apply Proposition~\ref{prop_2_div_card_4} to $\cM-\chi_L$. We remark that for each dimension $3\le k\le 7$ of the span of $\cM$ 
there exists an up to symmetry unique example, if we assume $\gamma_1(\cM)=1$. For even sets of points over $\F_2$ of cardinality $n\ge 8$ the classification gets more 
involved, see \cite{ubt_eref40887} for computational results.\footnote{Note that adding a parity check bit to an arbitrary binary linear code yields a $2$-divisible linear code
whose effective length is increased by one, so that the classification of even sets of points over $\F_2$ is equivalent to the classification of sets of points over $\F_2$ (in some sense).}

\bigskip

Let $\cM$ be a doubly-even multiset of points over $\F_2$. Cardinalities $n\in\{4,6,7\}$ are characterized in Corollary~\ref{cor_simplex_repetition_special} and cardinality 
$n=8$ is characterized in Proposition~\ref{prop_multiples_of_affine_spaces}. Due to Proposition~\ref{prop_Gamma_infty} we have $\#\cM\ge 10$ for all other feasible cases. 

\begin{proposition}
  \label{prop_4_div_card_10}
  Let $\cM$ be a $4$-divisible multiset of points in $\PG(v-1,2)$ with cardinality $10$. Then there either exists a point $P$ with $\cM(P)\ge 4$ or $\cM=2\cdot \chi_{B_5}$ 
  where $B_5$ denotes a projective base of size $5$. 
\end{proposition}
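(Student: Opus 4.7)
The plan is to assume $\gamma_1(\cM) \le 3$ (otherwise a point $P$ with $\cM(P) \ge 4$ exists and we are done) and deduce that $\cM = 2\cdot\chi_{B_5}$ for a projective base $B_5$ of size $5$. After restricting to the span of $\cM$, I would work in $\PG(k-1, 2)$ where $k$ is the dimension of that span. Since $\cM$ is $4$-divisible with $\#\cM = 10$, every hyperplane $H$ satisfies $\cM(H) \equiv 2 \pmod 4$, and $\cM(H) < 10$ by spanning, so $\cM(H) \in \{2, 6\}$ and only $a_2, a_6$ are nonzero in the spectrum. Solving the standard equations (\ref{se1}) and (\ref{se2}) gives $a_2 = 2^{k-2}+1$ and $a_6 = 3\cdot 2^{k-2} - 2$, and substituting into (\ref{se5}) (using $\lambda_i = 0$ for $i \ge 4$ from the hypothesis $\gamma_1 \le 3$) yields the key identity
$$
\lambda_2 + 3\lambda_3 \;=\; 1 + 2^{6-k}.
$$

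The main step is to force $\lambda_1 = 0$. If some point $R$ had $\cM(R) = 1$, then by Lemma~\ref{lemma_projection} the projection $\cM_R$ would be a $4$-divisible multiset of cardinality $9$, contradicting $\Gamma_2(4, 9) = \infty$ from Proposition~\ref{prop_Gamma_infty}. Hence $\lambda_1 = 0$, and (\ref{se4}) becomes $2\lambda_2 + 3\lambda_3 = 10$. Combined with the displayed identity this gives $\lambda_2 = 9 - 2^{6-k}$ and $3\lambda_3 = 2^{7-k} - 8$; non-negativity and integrality force $k = 4$, so $(\lambda_2, \lambda_3) = (5, 0)$.

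Finally I would check that the five double points form a projective base in $\PG(3, 2)$. They span $\PG(3, 2)$ by construction, and if any four lay on a common plane $E$, then $\cM(E) \ge 8$, contradicting $\cM(E) \in \{2, 6\}$. Hence no four are coplanar, i.e.\ they form a projective base $B_5$, and $\cM = 2\cdot\chi_{B_5}$. The only slightly delicate bookkeeping is deriving the identity $\lambda_2 + 3\lambda_3 = 1 + 2^{6-k}$ from (\ref{se5}); everything else is a short projection argument combined with $\Gamma_2(4, 9) = \infty$ and elementary arithmetic.
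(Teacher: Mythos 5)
Your proof is correct, but it reaches the conclusion by a genuinely different route than the paper. The paper also starts from $a_2=2^{k-2}+1$, $a_6=3\cdot 2^{k-2}-2$, but then leans on Lemma~\ref{lemma_lower_bound_space_mult}: it first excludes a point of multiplicity $3$ (all hyperplanes through such a point have multiplicity $6$, forcing $\cM(P_3)=2+2^{4-k}$ and then a contradiction on the line joining it to a double point), next pins down $k=4$ via a line of multiplicity at least $3$, reads off $\lambda_1=0$, $\lambda_2=5$ from the standard equations, and finally invokes Proposition~\ref{prop_2_div_card_5} applied to $\tfrac12\cM$. You instead kill $\lambda_1$ in one stroke by projecting through a multiplicity-$1$ point and citing $\Gamma_2(4,9)=\infty$, then let the second-moment identity $\lambda_2+3\lambda_3=1+2^{6-k}$ together with $2\lambda_2+3\lambda_3=10$ do all the remaining work (forcing $k=4$, $\lambda_3=0$, $\lambda_2=5$ by integrality and non-negativity), and you verify the projective-base property directly from $\cM(E)\in\{2,6\}$ rather than quoting the cardinality-$5$ classification. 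I checked the arithmetic: the identity from (\ref{se5}) is $46\cdot 2^{k-2}-29=45\left(2^{k-2}-1\right)+2^{k-2}\left(\lambda_2+3\lambda_3\right)$, which indeed gives $\lambda_2+3\lambda_3=1+2^{6-k}$, and the projection step is legitimate since $\gamma_1(\cM)\le 3$ forces $k\ge 3\ge 2$ so Lemma~\ref{lemma_projection} applies. Your route avoids Lemma~\ref{lemma_lower_bound_space_mult} and Proposition~\ref{prop_2_div_card_5} entirely and is arguably shorter and more self-contained for this particular statement; the paper's route has the advantage of exercising the averaging lemma and the recursive classification machinery that it reuses in many later proofs.
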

\begin{proof}
  W.l.o.g.\ we assume $\gamma_1(\cM)\le 3$ and that $\cM$ is spanning with dimension $k$. From the standard equations we compute $a_2=2^{k-2}+1$ and $a_6=3\cdot 2^{k-2}-2$, so that 
  $\lambda_2\ge 1$ since each hyperplane with multiplicity $2$ is $2$-divisible, see Lemma~\ref{lemma_divisibility_hyperplane}, and so contains a double point $P_2$. If $P_3$ 
  is a point with multiplicity $3$, then each  hyperplane $H$ containing $P_3$ has multiplicity $\cM(H)=6$, so that Lemma~\ref{lemma_lower_bound_space_mult} yields $k=4$ via 
  $\cM(P_3)=2+2^{4-k}$. For the line $L$ spanned by $P_2$ and $P_3$ we have $\cM(H)=6$ for all hyperplanes $H$ containing $L$, so that Lemma~\ref{lemma_lower_bound_space_mult} 
  yields $\cM(L)=4<\cM(P_2)+\cM(P_3)$ -- contradiction. Thus, we have $\gamma_1(\cM)=2$ and $k\ge 3$ since $(2^2-1)\cdot 2=6<10$. Moreover, $k\neq 3$ since  
  $\cM'$ defined by $\cM'(P)=2-\cM(P)$ is also $4$-divisible with cardinality $2\cdot \left(2^{k}-1\right)-10=4$, which is impossible. 
  
  Let $L$ be a line with $\cM(L)\ge 3$, which clearly exists due to $\lambda_2\ge 1$ and $\lambda_1+\lambda_2\ge 2$. Each hyperplane $H$ containing $L$ 
  has multiplicity $\cM(H)=6$, so that Lemma~\ref{lemma_lower_bound_space_mult} yields $\cM(L)=4$ and $k=4$. With this, solving the standard equations gives 
  $\lambda_1=0$ and $\lambda_2=5$. Thus, $\tfrac{1}{2}\cdot\cM$ is $2$-divisible with cardinality $5$ and we can apply Proposition~\ref{prop_2_div_card_5}.
\end{proof}

Using Corollary~\ref{cor_simplex_repetition_special} and Proposition~\ref{prop_4_div_card_10} we conclude:
\begin{corollary}  
  \label{cor_4_div_card_10}
  Let $\cM$ be a $4$-divisible multiset of points in $\PG(v-1,2)$ with cardinality $10$. Then, we have $\cM(P)\in\{0,2,4,6\}$ for every point $P$.
\end{corollary}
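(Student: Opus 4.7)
The plan is to reduce the problem via Proposition~\ref{prop_4_div_card_10} to an explicit decomposition of $\cM$ and then read off the multiplicities. First I would split into two cases according to that proposition. If $\cM = 2\cdot \chi_{B_5}$, then every point has multiplicity in $\{0,2\}$, which is already contained in $\{0,2,4,6\}$, and the claim holds trivially.

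Otherwise, there exists a point $P$ with $\cM(P)\ge 4$. The key idea here is to subtract $4\chi_P$ from $\cM$. Since $4\chi_P$ is $4$-divisible and $\cM(P)\ge 4$, the difference $\cM':=\cM - 4\chi_P$ is a genuine (nonnegative) $4$-divisible multiset of points of cardinality $10-4=6$. Corollary~\ref{cor_simplex_repetition_special} then forces $\cM' = 2\chi_L$ for some line $L$, yielding the explicit decomposition
\[
  \cM \;=\; 4\chi_P + 2\chi_L.
\]

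From this decomposition I can read off every point multiplicity directly: the point $P$ has multiplicity $4$ if $P\notin L$ or $6$ if $P\in L$; the points of $L\setminus\{P\}$ have multiplicity $2$; and every other point has multiplicity $0$. Hence $\cM(Q)\in\{0,2,4,6\}$ for every point $Q$, completing the case.

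I expect essentially no obstacle, since all the real work is already done in Proposition~\ref{prop_4_div_card_10} and Corollary~\ref{cor_simplex_repetition_special}. The only mild thing to verify is that the subtracted multiset $\cM'$ is indeed nonnegative (ensured by $\cM(P)\ge 4$) and $4$-divisible (immediate, as $4$-divisibility is preserved under subtraction of $4\chi_P$), so that the classification from Corollary~\ref{cor_simplex_repetition_special} applies.
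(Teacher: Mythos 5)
Your proof is correct and follows exactly the route the paper intends: the paper derives this corollary by combining Proposition~\ref{prop_4_div_card_10} with Corollary~\ref{cor_simplex_repetition_special}, which is precisely your case split into $\cM=2\chi_{B_5}$ versus $\cM=4\chi_P+2\chi_L$. Your write-up just makes explicit the subtraction step and the resulting multiplicities that the paper leaves implicit.
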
  
  
\begin{lemma}
  \label{lemma_lower_bound_point_mult_special1}
  Let $0\le l< r$ be integers and $\cM$ be a spanning $q^r$-divisible multiset of points in $\PG(k-1,q)$ of cardinality $n=q^l\cdot \frac{q^{r+1-l}-1}{q-1}+q^r$. Then $\gamma_1(\cM)=q^l$ or 
  $\gamma_1(\cM)=q^r-\frac{q^l-q^{2+r-k}}{q-1}$.
\end{lemma}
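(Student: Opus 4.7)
The plan is to split on the size of $m:=\gamma_1(\cM)$ relative to $a:=q^l[r-l]_q=(q^r-q^l)/(q-1)$. The starting observation is that $n=a+2q^r$, and $q^r$-divisibility forces $\cM(H)\equiv n\pmod{q^r}$, so every hyperplane multiplicity lies in $\{a,a+q^r\}$ (spanningness rules out $\cM(H)=n$). This is the only structural input beyond the standard counting identities and the lemmas/propositions already established.

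For the easier case $m>a$: I pick $P^*$ with $\cM(P^*)=m$, and note that every hyperplane $H\ge P^*$ has $\cM(H)\ge m>a$, so must equal $a+q^r$. Lemma~\ref{lemma_lower_bound_space_mult} applied with $K=P^*$, $s=a+q^r$, and $n-s=q^r$ then yields $\cM(P^*)=(a+q^r)-q^r/(q-1)+q^r/\bigl(q^{k-2}(q-1)\bigr)$; substituting $a=(q^r-q^l)/(q-1)$ and simplifying collapses this to $q^r-\tfrac{q^l-q^{2+r-k}}{q-1}$, the second asserted value.

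For the case $m\le a$: I would first record, by double-counting the pairs $(H,\text{point})$, that for any point $P$ the number of hyperplanes $H\ni P$ with $\cM(H)=a+q^r$ equals $b_1^{(P)}=q^{k-2-r}(\cM(P)-a)+2[k-2]_q$; non-negativity of $b_0^{(P)}=[k-1]_q-b_1^{(P)}$ is then equivalent to the universal upper bound $\cM(P)\le q^r-\tfrac{q^l-q^{2+r-k}}{q-1}=:f$. A short calculation gives $f-a=\bigl(q^r(q-2)+q^{r-k+2}\bigr)/(q-1)>0$, so in Case A one has $\cM(P)\le m\le a<f$ strictly for every $P$, hence $b_0^{(P)}\ge 1$: every point $P$ lies on some hyperplane $H_P$ of multiplicity exactly $a$.

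The decisive step is then the identification $a=q^l[(r-1)+1-l]_q$. Lemma~\ref{lemma_divisibility_hyperplane} tells us $\cM|_{H_P}$ is $q^{r-1}$-divisible, and its cardinality matches Proposition~\ref{prop_simplex_repetition_special} applied with $(r,l)$ replaced by $(r-1,l)$; therefore $\cM|_{H_P}=q^l\chi_K$ for some $(r-l)$-space $K\le H_P$. This forces $\cM(P)\in\{0,q^l\}$ for every $P$, so $m\le q^l$; conversely any point of such a $K$ has multiplicity $q^l$, giving $m\ge q^l$ and hence $m=q^l$. The main obstacle is precisely spotting this arithmetic coincidence which turns the hyperplanes of multiplicity $a$ into rigid $q^l$-fold subspaces; once this is in hand, both the lower and upper bounds on $m$ in Case A fall out immediately.
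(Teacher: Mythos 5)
Your proof is correct and follows essentially the same route as the paper's: both arguments reduce the hyperplane multiplicities to the two values $m_2=q^l[r-l]_q$ and $m_1=m_2+q^r$, identify the multiplicity-$m_2$ hyperplanes as $q^l$-fold $(r-l)$-spaces via Lemma~\ref{lemma_divisibility_hyperplane} and Proposition~\ref{prop_simplex_repetition_special}, and apply Lemma~\ref{lemma_lower_bound_space_mult} to a point of maximum multiplicity all of whose hyperplanes are forced to have multiplicity $m_1$. The only organizational difference is that, in the case $\gamma_1(\cM)\le m_2$, you produce a multiplicity-$m_2$ hyperplane through \emph{every} point by an explicit double count, whereas the paper invokes Lemma~\ref{lemma_repeated_simplex} once to see that multiplicity $m_2$ occurs at all and draws the bound $\gamma_1(\cM)\ge q^l$ from a single such hyperplane.
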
 
\begin{proof}
  The possible hyperplane multiplicities are $m_1:=q^l\cdot \frac{q^{r+1-l}-1}{q-1}=n-q^r$ and $m_2:=q^l\cdot \frac{q^{r-l}-1}{q-1}=n-2q^r$. Due to Lemma~\ref{lemma_repeated_simplex} 
  both multiplicities indeed occur. If $H$ is a hyperplane with multiplicity $\cM(H)=m_2=q^l\cdot \frac{q^{r-l}-1}{q-1}$, then Proposition~\ref{prop_simplex_repetition_special} implies 
  the existence of an $(r-l)$-dimensional subspace $S'$ in $H$ with $\cM|_H=q^l\cdot \chi_{S'}$. Thus we have $\gamma_1(\cM)\ge q^l$. If $P$ is a point with multiplicity $\cM(P)>q^l$, 
  then each hyperplane through $P$ has cardinality $m_1$. Lemma~\ref{lemma_lower_bound_space_mult} yields 
  $$
    \cM(P)=n-\frac{q^r}{q^{k-2}}\cdot[k-1]_q=q^r-\frac{q^l-q^{2+r-k}}{q-1}.
  $$
\end{proof}

\begin{proposition}
  \label{prop_4_div_card_11}
  Let $\cM$ be a $4$-divisible multiset of points in $\PG(v-1,2)$ with cardinality $11$. Then, $\cM=\chi_E+4\cdot\chi_P$, where $E$ is a plane and $P$ a point.
\end{proposition}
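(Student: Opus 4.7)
The plan is to force the existence of a point $P$ with $\cM(P)\ge 4$, then peel off $4\chi_P$ and reduce to the already-classified case of a $4$-divisible multiset of cardinality $7$ via Corollary~\ref{cor_simplex_repetition_special}. The cardinality $11=1\cdot[3]_2+2^2$ fits the hypothesis of Lemma~\ref{lemma_lower_bound_point_mult_special1} with $q=2$, $r=2$, $l=0$, and this lemma is the engine that supplies the needed point.

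Assume $\cM$ is spanning in $\PG(k-1,2)$. The cases $k\in\{1,2\}$ are ruled out by $4$-divisibility alone: in $\PG(0,2)$ the only point would have multiplicity $11\not\equiv 0\pmod 4$, and in $\PG(1,2)$ every point would have to have multiplicity $\equiv 3\pmod 4$ as a hyperplane, but three such residues add up to $9\equiv 1\pmod 4 \neq 11\pmod 4$. Thus $k\ge 3$ and Lemma~\ref{lemma_lower_bound_point_mult_special1} yields $\gamma_1(\cM)\in\{1,\,3+2^{4-k}\}$. The value $\gamma_1(\cM)=1$ is excluded by Proposition~\ref{prop_Gamma_1}, since $11\notin\{7,8\}$ and $11<14$ give $\Gamma_2(4,11)\ne 1$. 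Hence $\gamma_1(\cM)=3+2^{4-k}$ must be a positive integer, which forces $k\le 4$. Combining, $k\in\{3,4\}$ and $\gamma_1(\cM)\in\{4,5\}$.

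Now pick a point $P$ with $\cM(P)=\gamma_1(\cM)\ge 4$ and set $\cM':=\cM-4\chi_P$. Because $4\chi_P$ is itself a $4$-divisible multiset, $\cM'$ is a non-negative $4$-divisible multiset of cardinality $7$. Corollary~\ref{cor_simplex_repetition_special} then yields $\cM'=\chi_E$ for some plane $E$, whence $\cM=\chi_E+4\chi_P$ as claimed. The main technical hurdle is the invocation of Lemma~\ref{lemma_lower_bound_point_mult_special1} combined with the separate exclusion of $\gamma_1(\cM)=1$; once a point of multiplicity at least $4$ is secured, the final reduction to the $n=7$ classification is immediate.
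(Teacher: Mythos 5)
Your proof is correct and follows essentially the same route as the paper: apply Lemma~\ref{lemma_lower_bound_point_mult_special1} with $l=0$, $r=2$, rule out $\gamma_1(\cM)=1$ since no projective $4$-divisible code of length $11$ exists, conclude $\gamma_1(\cM)=3+2^{4-k}\ge 4$, and then reduce to the cardinality-$7$ classification after subtracting $4\cdot\chi_P$. Your explicit handling of $k\le 2$ and of the integrality constraint forcing $k\le 4$ only makes precise what the paper leaves implicit.
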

\begin{proof}
  Since no $4$-divisible set of $11$ points in $\PG(v-1,2)$ exists, Lemma~\ref{lemma_lower_bound_point_mult_special1} implies $\gamma_1(\cM)=3+2^{2+r-k}\ge 4$, where
  $k$ is the dimension of the span of $\cM$ and $r=2$. Reducing the multiplicity of a point $P$ with maximum multiplicity by four gives a $4$-divisible multiset of points 
  with cardinality $7$, which is the characteristic function of a plane by Proposition~\ref{prop_simplex_repetition_special}. 
\end{proof}

\begin{proposition}
  \label{prop_4_div_card_12}
  Let $\cM$ be a $4$-divisible multiset of points in $\PG(v-1,2)$ with cardinality $12$. Then either there exists a point with multiplicity at least four 
  or all points have even multiplicity.
\end{proposition}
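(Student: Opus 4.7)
The plan is a proof by contradiction: assume $\gamma_1(\cM) \le 3$ and that some point $P$ has odd multiplicity. Two cases arise, $\cM(P) \in \{1,3\}$, and I would dispatch them in turn.

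The case $\cM(P) = 3$ is immediate. Applying Lemma~\ref{lemma_projection}, the projection $\cM_P$ is $4$-divisible of cardinality $12 - 3 = 9$, contradicting $\Gamma_2(4, 9) = \infty$ from Proposition~\ref{prop_Gamma_infty}. Hence we may assume $\gamma_1(\cM) \le 2$, and the remaining task is to exclude $\cM(P) = 1$.

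Projecting through such a $P$ gives $\cM_P$, a $4$-divisible multiset of cardinality $11$, so Proposition~\ref{prop_4_div_card_11} forces $\cM_P = \chi_E + 4\chi_Q$ for a plane $E$ and a point $Q$. The hypothesis $\gamma_1(\cM) \le 2$ yields $\cM(\ell) \le 5$ for every line $\ell$ through $P$, so $\gamma_1(\cM_P) \le 4$; this rules out $Q \in E$, which would give $\cM_P(Q) = 5$. Consequently the support of $\cM_P$ spans a $4$-dimensional subspace, so (after passing to the span of $\cM$) the ambient space is $\PG(4,2)$. Translating back, the line $L$ through $P$ corresponding to $Q$ satisfies $\cM(L) = 5 = 1 + 2 + 2$, with both other points of multiplicity exactly $2$, while each of the seven lines through $P$ inside the $4$-dimensional subspace $T$ corresponding to $E$ has $\cM$-multiplicity $2$ and thus carries one additional point of multiplicity $1$. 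This pins down the spectrum: $\lambda_1 = 8$, $\lambda_2 = 2$, and $\lambda_i = 0$ otherwise.

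The main obstacle is then to derive a contradiction from this rigid configuration, and I expect the standard equations to close the argument directly. All hyperplane multiplicities lie in $\{0, 4, 8\}$ (they are multiples of $4$ and at most $n - 1 = 11$ by spanning). Equations (\ref{se1}), (\ref{se2}), and (\ref{se5}) with $k = 5$ and $n = 12$ read
$$
  a_0 + a_4 + a_8 = 31, \qquad a_4 + 2a_8 = 45, \qquad 6a_4 + 28a_8 = 66 \cdot 7 + 8\lambda_2 = 478,
$$
which solve to $a_8 = 13$, $a_4 = 19$, and $a_0 = -1 < 0$, an impossibility that completes the proof.
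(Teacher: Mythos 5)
Your argument is correct, but it takes a genuinely different route from the paper's in both halves. The paper never projects: for a point of multiplicity $3$ it looks at hyperplanes of multiplicity $4$ through that point, excludes them via the classification of $2$-divisible multisets of cardinality $4$ (Proposition~\ref{prop_2_div_card_4}), and then derives a contradiction from Lemma~\ref{lemma_lower_bound_space_mult}; for the coexistence of multiplicities $1$ and $2$ it applies the same hyperplane-restriction argument to the line joining the two points. You instead project through the offending point in both cases: multiplicity $3$ dies immediately against $\Gamma_2(4,9)=\infty$ (Proposition~\ref{prop_Gamma_infty}), and multiplicity $1$ is funnelled through Proposition~\ref{prop_4_div_card_11}, which rigidifies the configuration ($k=5$, $\lambda_1=8$, $\lambda_2=2$) so that the standard equations finish with $a_0=-1$. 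I checked the details: the exclusion of $Q\le E$ via $\gamma_1(\cM_P)\le 4$ is exactly what Lemma~\ref{lemma_projection} provides, the spectrum count $\lambda_1=8$, $\lambda_2=2$ is forced, and the linear system indeed gives $a_8=13$, $a_4=19$, $a_0=-1$. Your treatment of the multiplicity-$3$ case is shorter than the paper's, and your treatment of the multiplicity-$1$ case is more explicit --- the paper's closing sentence leaves the final contradiction largely to the reader, whereas you compute it in full; the price is that you need the full strength of Proposition~\ref{prop_4_div_card_11} plus a dimension count, while the paper only needs the cardinality-$4$ classification. All results you invoke (Propositions~\ref{prop_Gamma_infty} and~\ref{prop_4_div_card_11}, Lemma~\ref{lemma_projection}, and the standard equations) precede this proposition in the paper, so there is no circularity.
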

\begin{proof}
  W.l.o.g.\ we assume $\gamma_1(\cM)\le 3$. Assume that $P$ is a point with multiplicity $\cM(P)=3$. If $H$ is a hyperplane with $P\le H$ and $\cM(H)=4$, then we can apply 
  Proposition~\ref{prop_2_div_card_4} to conclude a contradiction since $\cM|_H$ is $2$-divisible, see Lemma~\ref{lemma_divisibility_hyperplane}. Thus, all hyperplanes
  containing $P$ have multiplicity $8$ and Lemma~\ref{lemma_lower_bound_space_mult} yields a contradiction. So, we conclude $\gamma_1(\cM)=2$ from Proposition~\ref{prop_Gamma_1}.
  
  Denoting the dimension of the span of $\cM$ by $k$ we conclude $k\ge 3$ from $2\cdot\left(2^k-1\right)\ge 11$. Moreover, $k\neq 3$ since $\cM'$ defined by 
  $\cM'(P)=2-\cM(P)$ is also $4$-divisible with cardinality $2\cdot 7-12=2$ otherwise, which is clearly impossible. Now assume that $P_1$ is a point with multiplicity $1$ 
  and $P_2$ a point with multiplicity $2$. Consider the line $L$ spanned by $P_1$ and $P_2$. Now observe that $\cM(H)\neq 4$ for each hyperplane $H$ containing $L$ 
  since $\cM|_H$ is $2$-divisible and Proposition~\ref{prop_2_div_card_4} would yield a contradiction otherwise. Since we have $\lambda_2\ge 1$, this implies $\lambda_1=0$.  
\end{proof}
So, we can read off the explicit classification from Proposition~\ref{prop_multiples_of_affine_spaces} or Proposition~\ref{prop_2_div_card_6}. 

\begin{proposition}
  \label{prop_div_4_card_13}
  Let $\cM$ be a $4$-divisible multiset of points in $\PG(v-1,2)$ with cardinality $13$. Then, either $\cM=\chi_E+2\cdot\chi_L$, where $E$ is a plane and $L$ a line, 
  or there exists a projective base $B_5$ of size $5$ and a point $C$ outside of the span of $B_5$ such that $\cM(C)=3$, $\cM(Q)=1$ if there exists a point $P$ in $B_5$ such that 
  $Q\in\langle P,C\rangle$, $Q\neq C$, and $\cM(Q)=0$ otherwise.
\end{proposition}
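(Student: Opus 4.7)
The strategy is to split on $\gamma_1 := \gamma_1(\cM) \in \{2, 3\}$. The upper bound $\gamma_1 \le 3$ follows from Proposition~\ref{prop_Gamma_infty}: a point $P$ with $\cM(P) \ge 4$ would make $\cM - 4\chi_P$ a $4$-divisible multiset of cardinality $9$, which does not exist. Proposition~\ref{prop_Gamma_1} gives $\gamma_1 \ne 1$.

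For $\gamma_1 = 2$, I would pick a point $P$ with $\cM(P) = 2$ and apply Lemma~\ref{lemma_projection} to produce a $4$-divisible multiset $\cM_P$ of cardinality $11$. Proposition~\ref{prop_4_div_card_11} forces $\cM_P = \chi_{E'} + 4\chi_{Q'}$. Since every line through $P$ has $\cM$-multiplicity at most $6$, one gets $\gamma_1(\cM_P) \le 4$, forcing $Q' \notin E'$ and $\cM_P(Q') = 4$. The line $L$ through $P$ lifting $Q'$ therefore satisfies $\cM(L) = 6$, so all three of its points have multiplicity $2$. Then $\cM - 2\chi_L$ is $4$-divisible of cardinality $7$, hence $\chi_E$ for a plane $E$ by Corollary~\ref{cor_simplex_repetition_special}, yielding $\cM = \chi_E + 2\chi_L$ (with $E$ and $L$ automatically disjoint since $\gamma_1 = 2$).

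For $\gamma_1 = 3$, I would pick $C$ with $\cM(C) = 3$ and project through $C$ to obtain $\cM_C$, a $4$-divisible multiset of cardinality $10$. Proposition~\ref{prop_4_div_card_10} gives two options. If $\cM_C = 2\chi_{B_5}$, the five lines $L_i$ through $C$ lifting $B_5$ each have $\cM(L_i) = 5$, so the two non-$C$ points on each $L_i$ have multiplicities summing to $2$. A pattern $(2,0)$ gets ruled out by projecting through the multiplicity-$2$ point $P$: Proposition~\ref{prop_4_div_card_11} forces all multiplicities of $\cM_P$ into $\{0,1,4,5\}$, but $\cM_P(L_i/P) = \cM(L_i) - \cM(P) = 3$. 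So each $L_i$ contributes two multiplicity-$1$ points, giving precisely the second alternative. Otherwise $\cM_C$ has a point $R'$ with $\cM_C(R') \ge 4$, and $\cM_C - 4\chi_{R'}$ is $4$-divisible of cardinality $6$, hence $2\chi_L$ for a line $L$ by Corollary~\ref{cor_simplex_repetition_special}. Lifting $R'$ to a line $L'_{R'} \ni C$ and $L$ to a plane $E' \ni C$, the sub-subcases $R' \in L$ (so $L'_{R'} \subseteq E'$ with $\cM$ supported in $E'$) and $R' \notin L$ (so $L'_{R'} \cap E' = \{C\}$ and $k = 4$) both yield $\cM = \chi_{E'} + 2\chi_{L'_{R'}}$.

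The main obstacle is the last sub-subcase ($R' \notin L$), where I must rule out the multiplicity pattern $(3,3,1)$ on $L'_{R'}$ in favor of $(3,2,2)$. If a second multiplicity-$3$ point $X$ existed on $L'_{R'}$, projection through $X$ would again land in the multiplicity-$\ge 4$ alternative and produce a second plane $E^*$ through $X$; the intersection $M := E' \cap E^*$ is then a line disjoint from $L'_{R'}$ in the $4$-dimensional ambient space, and the three hyperplanes containing $L'_{R'}$ force each point of $M$ to have multiplicity $2$. But then any hyperplane meeting $L'_{R'}$ at the multiplicity-$1$ point $Y$ has $\cM$-multiplicity in $\{3,7\}$, contradicting $\cM(H) \equiv 1 \pmod 4$. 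A similar but easier use of Proposition~\ref{prop_4_div_card_11} rules out the $(2,0)$ patterns on the three remaining lines of $E'$ through $C$.
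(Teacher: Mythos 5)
Your proof is correct, but it follows a genuinely different route from the paper's. You organize the argument by $\gamma_1(\cM)\in\{2,3\}$ and project through a point of maximal multiplicity, reducing everything to the already established classifications of cardinality $11$ (Proposition~\ref{prop_4_div_card_11}) and cardinality $10$ (Proposition~\ref{prop_4_div_card_10}) and then lifting the quotient structure back; the recurring workhorse is that projecting through a multiplicity-$2$ point produces a cardinality-$11$ multiset whose point multiplicities must lie in $\{0,1,4,5\}$, which kills all the $(2,0)$ patterns. The paper never projects in this proof: it computes the spectrum from the standard equations, splits on $\lambda_2+\lambda_3\ge 2$ versus $\lambda_2+\lambda_3\le 1$, and uses Lemma~\ref{lemma_lower_bound_space_mult} to pin down the multiplicity of the line spanned by two heavy points, obtaining $\cM\ge 2\cdot\chi_L$ in the first case and the cone over a projective base in the second. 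Your approach is more modular and ``inductive on cardinality,'' but it forces you to confront the one configuration the paper's case split sidesteps, namely two multiplicity-$3$ points on the distinguished line; your exclusion of it (two planes $E'$, $E^*$ meeting in a line $M$ disjoint from $L'_{R'}$, each point of $M$ of multiplicity $2$, then a hyperplane through the multiplicity-$1$ point violating $\cM(H)\equiv 1\pmod 4$) is sound, though you should state explicitly the intermediate fact that makes it work: since $\cM(E'\setminus\{C\})=\cM(E^*\setminus\{X\})=6=\#\cM-\cM(L'_{R'})$, \emph{all} multiplicity outside $L'_{R'}$ sits on $M$, and each of the three hyperplanes through $L'_{R'}$ has multiplicity exactly $9$ and meets $M$ in a single point, which is what forces multiplicity $2$ there. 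The only other loose end, common to both your write-up and the paper's, is the lift of the projective base from the quotient to the ambient space in the second alternative; this is easily repaired by choosing representatives on the five lines whose sum is zero rather than $C$.
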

\begin{proof}
  Since no $4$-divisible multiset of points in $\PG(v-1,2)$ with cardinality $9$ exists, we have $\gamma_1(\cM)<4$ and $k\ge 3$ for the dimension $k$ of the span of $\cM$. 
  Due to Corollary~\ref{cor_simplex_repetition_special} it suffices to show the existence of a plane $E$ with $\cM\ge \chi_E$. If $k=3$, then we consider $\cM'$ defined by 
  $\cM'(P)=3-\cM(P)$. With this, $\cM'$ is also $4$-divisible, has cardinality $3\cdot \left(2^k-1\right)-\#\cM=8$ and maximum point multiplicity at most $3$. From 
  Proposition~\ref{prop_multiples_of_affine_spaces} we conclude $\gamma_1(\cM')\le 2$, so that we can choose $E$ as the ambient space. In the remaining part we have $k\ge 4$.
  Since there is no $2$-divisible multiset of cardinality $1$, we can use the standard equations to compute $a_5=5\cdot 2^{k-3}+1$, $a_9=3\cdot 2^{k-3}-2$, and $\lambda_2=1-3\lambda_3+2^{6-k}$.   
  
  Assume that $P_1,P_2$ are two different points with $\cM(P_1),\cM(P_2)\ge 2$. Let $L$ be the line spanned by $P_1,P_2$ and $H$ be an arbitrary hyperplane containing $L$. 
  Since $\cM|_H$ is $2$-divisible and contains both $P_1$ and $P_2$, Proposition~\ref{prop_2_div_card_5} yields $\cM(H)=9$. Applying Lemma~\ref{lemma_lower_bound_space_mult} 
  yields $\cM(L)=5+2^{5-k}$. If $k=5$, then $\lambda_2=3-3\lambda_3$ and the assumption $\lambda_2+\lambda_3\ge 2$ implies $\lambda_3=0$ and $\lambda_2=3$, so that $\cM\ge 2\cdot\chi_L$ (using $\cM(L)=6$). 
  If $k=4$, then we have $\cM(L)=7$ and $\lambda_3\le 1$ also implies $\cM\ge 2\cdot\chi_L$. Since $2\cdot \chi_L$ is $4$-divisible $\cM-2\cdot\chi_L$ is also $4$-divisible 
  with cardinality $7$, Corollary~\ref{cor_simplex_repetition_special} implies the existence of a plane $E$ with $\cM=2\cdot\chi_L+\chi_E$.
  
  If $\lambda_2+\lambda_3\le 1$, then $\lambda_2=1-3\lambda_3+2^{6-k}$ implies $\lambda_2=0$, $\lambda_3=1$, and $k=5$. If $P$ is a point with multiplicity $1$, then all 
  hyperplanes containing the line $L$ spanned by $P$ and the unique point of multiplicity $3$ have multiplicity $9$, so that Lemma~\ref{lemma_lower_bound_space_mult} yields $\cM(L)=5$. 
  In other words there exist five pairwise different lines $L_1,\dots,L_5$ that all contain the unique point of multiplicity $3$, denoted by $P_3$, such that $\cM=-2\chi_{P_3}+\sum_{i=1}^5 \chi_{L_i} $. 
  If $S$ is a solid, then $S$ can contain at most three of the lines $L_i$ (using $\cM(S)\le 9$) and intersects the others in a point. Thus, factoring out $P_3$ from the $L_i$ yields a 
  projective base of size $5$.      
\end{proof}

\begin{lemma}
  \label{lemma_aux_card_15_4_div_1}
  Let $\cM$ be a $4$-divisible multiset of points in $\PG(v-1,2)$ with $\#\cM=15$ and $2\le \gamma_1(\cM)\le 3$ that does not contain a plane in its support. Then, we have $k\ge 5$ 
  for the dimension $k$ of the span of $\cM$ and there does not exist a line $L'$ with $\cM(P)\ge 2$ for all points $P$ in $L'$. Moreover, we have 
   \begin{eqnarray}
    a_7 &=& 7\cdot 2^{k-3}+1-2a_3,\\ 
    a_{11} &=& 2^{k-3}-2+a_3,\text{ and}\\ 
    a_3 &=& \left(4+\lambda_2+3\lambda_3\right)\cdot 2^{k-6}-1 \label{card25_lambda}
  \end{eqnarray}   
  for the spectrum of $\cM$ and the multiplicity $\cM(S)$ of each subspace $S$ of codimension $2$ is odd.
\end{lemma}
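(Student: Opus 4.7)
The plan is to dispatch the spectrum identities and codim-$2$ parity by pure modular arithmetic on the standard equations, derive the doubled-line obstruction from a prior non-existence result, and save the real work for the dimension bound $k\ge 5$.

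Spanning $4$-divisibility with $\#\cM=15$ and $\gamma_1\le 3$ restricts hyperplane multiplicities to $\{3,7,11\}$, since they are $\equiv 3\pmod 4$, strictly less than $15$, and at most $3[k-1]_2$. Substituting $\sum_i\binom{i}{2}\lambda_i=\lambda_2+3\lambda_3$ into the standard equations~(\ref{se1}), (\ref{se2}), (\ref{se5}) and solving the resulting $3\times 3$ linear system in $a_3,a_7,a_{11}$ yields the three stated spectrum formulas. For the codim-$2$ claim, any $S$ lies in exactly $q+1=3$ hyperplanes; each point outside $S$ is in exactly one of them, so $\sum_{H\ge S}\cM(H)=2\cM(S)+15$. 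The left side is a sum of three values each $\equiv 3\pmod 4$, hence $\equiv 1\pmod 4$, forcing $\cM(S)$ odd. For the non-doubled-line claim, $2\chi_{L'}$ has weights in $\{0,4\}$ and hence is itself $4$-divisible, so $\cM-2\chi_{L'}$ would be a $4$-divisible multiset of cardinality $9$, contradicting $\Gamma_2(4,9)=\infty$ from Proposition~\ref{prop_Gamma_infty}.

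The main obstacle is ruling out $k=3$ and $k=4$. For $k=3$ the ambient space is a Fano plane, the standard equations collapse to $a_3=1$, $a_7=6$, and the hypothesis forces the support to have at most $6$ points; since $\#\cM=15$ with $\gamma_1\le 3$ requires at least $5$ support points, the support has size $5$ or $6$. Size $5$ would make every support point have multiplicity $3$, but any five Fano points contain a collinear triple, producing a line of multiplicity $9\not\equiv 3\pmod 4$. Size $6$ has a single zero point $P_0$, and the four Fano lines avoiding $P_0$ lie entirely in the support; counting incidences gives $\sum_{L\not\ni P_0}\cM(L)=2\#\cM=30$, exceeding the bound $4\cdot 7=28$. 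For $k=4$ we must have $a_3\ge 1$ (else $\lambda_2=\lambda_3=0$, so $\cM=\chi_V$, putting every plane in the support), so I can pick any $H_3$; by Corollary~\ref{cor_simplex_repetition_special}, $\cM|_{H_3}=\chi_L$ for some line $L$. The three hyperplanes through $L$ have multiplicities summing to $2\cM(L)+15=21$, and one of them is $H_3$ itself, so the other two sum to $18$ with values in $\{3,7,11\}$, forcing $\{7,11\}$; in particular there is a unique hyperplane $H_{11}\ge L$ with $\cM(H_{11})=11$. The restriction $\cM|_{H_{11}}$ is $2$-divisible of cardinality $11$ in the Fano plane $H_{11}$, carrying mass $3$ on $L$ and mass $8$ spread over the four points of $H_{11}\setminus L$. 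For each pair of these four points, the Fano line through them meets $L$ in a mult-$1$ point, so $2$-divisibility of $\cM|_{H_{11}}$ forces $\cM(P_i)+\cM(P_j)$ to be even; since this holds for all six pairs, the four multiplicities share a common parity. Being at most $3$ and summing to $8$, they are either $(2,2,2,2)$ or a permutation of $(3,3,1,1)$; in either case every point of $H_{11}\setminus L$, and hence every one of the seven points of $H_{11}$, lies in the support, contradicting the hypothesis.

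The trickiest step is the $k=4$ case: seeing that the unique pencil through $L$ must contain exactly one hyperplane of each admissible multiplicity, which pins down $H_{11}$ and lets the Fano-plane parity analysis force a full plane into the support.
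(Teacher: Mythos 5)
Your proposal is correct, and three of the four claims (the spectrum formulas via the standard equations, the codimension-$2$ parity via summing the three hyperplanes through $S$, and the doubled-line exclusion via the non-existence of a $4$-divisible multiset of cardinality $9$) coincide with the paper's proof. Where you genuinely diverge is in ruling out $k\in\{3,4\}$. For $k=3$ the paper invokes the complementation trick: $\cM'(P)=3-\cM(P)$ is $4$-divisible of cardinality $6$, hence a double line by Corollary~\ref{cor_simplex_repetition_special}, forcing $\cM(P)\ge 1$ everywhere and thus a plane in the support; you instead bound the support size between $5$ and $6$ and kill both cases by direct Fano-plane counting (a collinear triple of triple points, resp.\ the incidence count $30>4\cdot 7$). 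For $k=4$ the paper works in the multiplicity-$7$ hyperplane through $L$ to force $\lambda_2\ge 2$ and then pins down $\left(\lambda_1,\lambda_2,\lambda_3\right)$ by integrality of $a_3$; you instead work inside the multiplicity-$11$ hyperplane $H_{11}\ge L$, where the six lines joining pairs of the four points of $H_{11}\setminus L$ each meet $L$ in a point of multiplicity $1$, so $2$-divisibility forces a common parity on those four multiplicities and hence the distributions $(2,2,2,2)$ or $(3,3,1,1)$, either of which puts the whole plane $H_{11}$ in the support. Your $k=4$ route is arguably more transparent than the paper's (which leaves the step from $\lambda_2\ge 2$ to $\lambda_2=4,\lambda_3=0$ rather terse), at the cost of being specific to this configuration rather than reusing the complementation machinery. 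The only cosmetic blemish is the aside that for $k=3$ ``the standard equations collapse to $a_3=1$, $a_7=6$'': this is true, but only after observing that $\gamma_1\le 3$ forces $a_{11}=0$ in a Fano plane; since you never use these values, nothing is affected.
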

\begin{proof}
  Since $\gamma_1(\cM)\le 3$, we have $k\ge 3$. If $k=3$, then we consider $\cM'$ defined by $\cM'(P)=3-\cM(P)$. With this, $\cM'$ is $4$-divisible with cardinality $6$, to that 
  Corollary~\ref{cor_simplex_repetition_special} implies $\gamma_1(\cM')\le 2$. Thus, we can choose $E$ as the ambient space and have $\cM\ge \chi_E$. In the remaining part we have $k\ge 4$. 
  The stated equations for the spectrum can be directly concluded from the standard equations. If $k=4$, then $\lambda_2+\lambda_3\ge 1$ implies $a_3\ge 1$. Let $H$ be a hyperplane with multiplicity 
  $3$ and $L$ be a line such that $\cM|_H=\chi_L$. For the two other hyperplanes $H'$, $H''$ that contain $L$ w.l.o.g.\ we can assume $\cM(H')=7$ and $\cM(H'')=11$. Since $\cM|_{H'}$ is not the 
  characteristic function of a plane there exists a point $P\le H'$ with $\cM(P)\ge 2$ so that $\cM|_{H'}-\chi_L-2\cdot \chi_P$ is a double point and we have $\lambda_2\ge 2$. Thus,
  $a_3\in\N$ implies $\lambda_2=4$, $\lambda_3=0$, and $\lambda_1=7$. However, there cannot be seven points of multiplicity $1$ and two points of multiplicity $2$ in $H''$.
  
  If $L'$ is a line with $\cM(P)\ge 2$ for all points $P$ in $L'$, then $\cM-2\cdot \chi_{L'}$ would be $4$-divisible with cardinality $9$, which is impossible. If $S$ is a subspace 
  of codimension $2$, then denote the three hyperplanes containing $S$ by $H_1,H_2,H_3$, so that $\#\cM+2\cdot\cM(S)=\cM(H_1)+\cM(H_2)+\cM(H_3)\equiv 1\pmod 4$ yielding 
  $\cM(S)\equiv 1\pmod 2$.  
\end{proof}

\begin{lemma}
  \label{lemma_aux_card_15_4_div_2}
  Let $\cM$ be a $4$-divisible multiset of points in $\PG(v-1,2)$ with $\#\cM=15$ and $2\le \gamma_1(\cM)\le 3$ that does not contain a plane in its support. There exist $a_3$ lines 
  $L_i$ sharing a common point $B$ such that $\cM(P)=1$ for all points $P$ contained in one of the lines $L_i$ and $\lambda_1\ge 2a_3+1$. Moreover, we have $k=5$ 
  for the dimension $k$ of the span of $\cM$.
\end{lemma}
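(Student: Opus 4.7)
The plan is to set up the $a_3$ lines arising from the multiplicity-$3$ hyperplanes, establish that they pairwise meet, upgrade this to a common point, count multiplicity-$1$ points, and finally use the spectrum identities of Lemma~\ref{lemma_aux_card_15_4_div_1} to pin down $k=5$.

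First, for each hyperplane $H$ with $\cM(H)=3$, Lemma~\ref{lemma_divisibility_hyperplane} gives that $\cM|_H$ is $2$-divisible of cardinality $3$, so by Corollary~\ref{cor_simplex_repetition_special} there is a line $L\le H$ with $\cM|_H=\chi_L$. Labelling these lines $L_1,\dots,L_{a_3}$, each carries multiplicity exactly $1$ on all three of its points. For distinct $i,j$ set $S=H_i\cap H_j$; then $\cM(S)\le\cM(H_i)=3$ and, by Lemma~\ref{lemma_aux_card_15_4_div_1}, $\cM(S)$ is odd. The case $\cM(S)=3$ would force $L_i\le S\le H_j$ and hence $L_i=L_j$; so for $i\ne j$ we have $\cM(S)=1$, and the unique supported point of $S$ is simultaneously $L_i\cap S$ and $L_j\cap S$, so $L_i$ and $L_j$ share a point.

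Next I would apply the classical fact that pairwise intersecting projective lines either concur in a single point or are coplanar. The coplanar alternative is ruled out as follows: if the $L_i$ all lie in a plane $E$, then $\cM$ equals $1$ on the entire union $\bigcup L_i$; using that no plane lies in $\operatorname{supp}(\cM)$, either the union covers all of $E$ (yielding a plane in the support, contradiction) or it covers exactly six points with the remaining point of $E$ having multiplicity $0$, so $\cM(E)=6$. For $k=5$ the plane $E$ is codimension-$2$ and $\cM(E)$ must be odd by Lemma~\ref{lemma_aux_card_15_4_div_1}, contradicting $\cM(E)=6$; the case $k\ge 6$ is handled together with the dimension argument below. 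With concurrency established, let $B$ be the common point; since $L_i\cap L_j=\{B\}$ for $i\ne j$, the three points of each $L_i$ contribute $B$ plus two further multiplicity-$1$ points that are pairwise distinct across the $L_i$, so $\lambda_1\ge 1+2a_3$.

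Finally, to obtain $k=5$, I would plug $\lambda_1\ge 2a_3+1$ into the global identity $\lambda_1+2\lambda_2+3\lambda_3=15$ and combine with the formula $a_3=(4+\lambda_2+3\lambda_3)\cdot 2^{k-6}-1$ from Lemma~\ref{lemma_aux_card_15_4_div_1}. A direct inequality gives $k\le 6$, and for $k=6$ the condition $\gamma_1(\cM)\ge 2$ narrows the possibilities to a short explicit list of spectra, e.g.\ $(\lambda_2,\lambda_3,a_3)\in\{(1,0,4),(2,0,5)\}$. These residual cases can be eliminated by projecting through $B$ via Lemma~\ref{lemma_projection}: the image $\cM_B$ is $4$-divisible of cardinality $14$ in a $(k-1)$-dimensional projective space, the $a_3$ lines through $B$ project to $a_3$ distinct points of multiplicity $2$, and combining the resulting hyperplane counts of $\cM_B$ with the admissible spectrum of a $4$-divisible multiset of cardinality $14$ produces a contradiction. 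The main obstacle will be performing the coplanar exclusion in the second step and eliminating the borderline $k=6$ configurations cleanly; both require careful combinatorial bookkeeping rather than a single short argument.
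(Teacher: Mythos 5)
Your opening moves match the paper's: extracting a line $L_i$ from each multiplicity-$3$ hyperplane via Corollary~\ref{cor_simplex_repetition_special}, and using the odd multiplicity of codimension-$2$ subspaces from Lemma~\ref{lemma_aux_card_15_4_div_1} to force $\cM(H_i\cap H_j)=1$ and hence pairwise intersection (a small point: you should first rule out $L_i=L_j$ for $H_i\neq H_j$ before invoking it to exclude $\cM(S)=3$; this follows since $L_i=L_j$ would give $\cM(H_i\cap H_j)=3$ and a third hyperplane of multiplicity $15$, impossible for a spanning multiset). After that, however, there are two genuine gaps. First, the coplanar non-concurrent alternative is never actually eliminated for $k\ge 6$: you defer it to ``the dimension argument below,'' but that argument rests on $\lambda_1\ge 2a_3+1$ and on projecting through the common point $B$, both of which presuppose concurrency. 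The logic is circular. The paper closes this case separately: $\cM(E)=6$ even forces $k\ge 6$ by the codimension-$2$ parity, Equation~(\ref{card25_lambda}) then forces $a_3\ge 4$, and the fourth line is forced into $E$, which is shown to be impossible. You need an argument of this kind before you may assume the point $B$ exists.

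Second, your elimination of the residual $k=6$ concurrent configurations $(\lambda_2,\lambda_3,a_3)\in\{(1,0,4),(2,0,5)\}$ is only an announcement. Projecting through $B$ via Lemma~\ref{lemma_projection} yields a $4$-divisible multiset $\cM_B$ of cardinality $14$ with $a_3$ double points, but $4$-divisible multisets of cardinality $14$ exist in abundance (indeed $\Gamma_2(4,14)=1$), so no contradiction follows from the ``admissible spectrum'' alone; you would have to exploit the specific structure of $\cM_B$, and you give no indication of how. The paper's route here is substantially different and does real work: it shows that no solid spanned by three of the $L_i$ can contain a fourth (via the $2$-divisibility of $\cM|_H-2\chi_Q-\chi_{L_1}-\chi_{(L_2\cup L_3)\setminus B}=\chi_{L_4\setminus B}$), deduces that any four of the lines span a hyperplane of multiplicity $11$, pins down $a_3=5$, $(\lambda_1,\lambda_2,\lambda_3)=(11,2,0)$, and finally reaches a contradiction by double-counting over the hyperplanes avoiding $B$. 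Until you either carry out your projection argument in detail or substitute an argument of comparable substance, the proof of $k=5$ --- and with it the lemma --- is incomplete.
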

\begin{proof}
  Let $H$ be a hyperplane of multiplicity $3$ and $L$ be a line with $\cM|_H=\chi_L$. If $H'$ is another hyperplane with multiplicity $3$ and $L'$ a line with $\cM|_{H'}=\chi_{L'}$, then we have 
  $L\neq L'$ since otherwise $\cM(H\cap H')=3$ and the third hyperplane containing $H\cap H'$ would have multiplicity $15=\#\cM$. So, there exist $a_3$ lines $L_i$ such that $\cM(P)=1$ for all points 
  $P$ contained in one of the lines $L_i$. Moreover any two such lines $L_i$ intersect in exactly a point.  
  
  So, if $a_3\le 2$, then there exist $a_3$ lines $L_i$ sharing a common point $B$ such that $\cM(P)=1$ for all points $P$ contained in one of the lines $L_i$ and $\lambda_1\ge 2a_3+1$.
  
  If there exist three of the lines $L_i$ with pairwise different intersection points, then they span a plane $E$ with six points of multiplicity $1$ noting that the $7$th point $P$ has multiplicity $0$ 
  since $\cM$ does not contain a plane in its support. Since the multiplicity of every subspace of codimension $2$ is odd, we have $k\ge 6$, see Lemma~\ref{lemma_aux_card_15_4_div_1}. With this 
  Equation~(\ref{card25_lambda}) yields $a_3\ge 4$. The fourth line $L_i$ also has to be completely contained in $E$, which is impossible. 
  
  Thus, in general all lines $L_i$ intersect in a common point $B$ and we conclude $\lambda_1\ge 2a_3+1$. Due to Lemma~\ref{lemma_aux_card_15_4_div_1} it remains to show $k\le 5$. From 
  $\lambda_2+\lambda_3\ge 1$ we conclude $\lambda_1\le 13$ and $a_3\le 6$, so that Equation~(\ref{card25_lambda}) implies $k\le 6$. If $k=6$, then Equation~(\ref{card25_lambda}) 
  gives $a_3\ge 4$. Let $S$ be a solid spanned by three of the lines $L_i$. If $S$ also contains a fourth line $L_i$, then we have $\cM(S)\ge 9$ and all three hyperplanes containing 
  $S$ have multiplicity $11$ and indeed $\cM(S)=9$. Let $H$ be one of these hyperplanes that contains a point $Q$ with multiplicity at least $2$, so that indeed $\cM(Q)=2$. W.l.o.g.\ 
  we assume that the four lines in $S$ are labeled $L_1,\dots,L_4$. Note that $\cM|_H$, $2\cdot \chi_Q$, $\chi_{L_1}$, and $\chi_{(L_2\cup L_3)\backslash B}$ are $2$-divisible so that 
  $\cM|_H-2\cdot\chi_Q-\chi_{L_1}-\chi_{(L_2\cup L_3)\backslash B}=\chi_{L_4\backslash B}$ is 
  also $2$-divisible, which is a contradiction. Thus, any four lines $L_i$ span a hyperplane $H$ with multiplicity $11$. Pick one of these and a solid $S$ that intersects these four $L_i$ in 
  exactly point $B$, so that $\cM(S)\in\{1,3\}$. If $\cM(S)=1$, then the two other hyperplanes that contain $S$ have multiplicity $3$, which would imply $\gamma_1(\cM)=1<2$. If 
  $\cM(S)=3$, then one of the other two hyperplanes containing $S$ has multiplicity $3$ and one has multiplicity $7$, which implies $a_3\ge 5$. With this, Equation~(\ref{card25_lambda}) 
  gives $a_3=5$, $\lambda_1=11$, $\lambda_2=2$, and $\lambda_3=0$. Let $\mathcal{H}$ denote the $16$ hyperplanes not containing $B$. Since $5\cdot 1+2\cdot 2=9<11$ we have 
  $\cM(H)\in \{3,7\}$ for all $H\in \mathcal{H}$, so that $8\cdot 14=\sum_{H\in\mathcal{H}} (\cM(H)-1) \ge 6\cdot 16$ -- contradiction.
  
  Thus, we have $k\le 5$ and Lemma~\ref{lemma_aux_card_15_4_div_1} gives $k=5$.               
\end{proof}

\begin{proposition}
  \label{prop_4_div_card_15}
  Let $\cM$ be a $4$-divisible multiset of points in $\PG(v-1,2)$ with cardinality $15$. Then either there exists a point of multiplicity at least $4$, $\gamma_1(\cM)=1$, or there 
  exists a plane $E$ with $\cM\ge\chi_E$.
\end{proposition}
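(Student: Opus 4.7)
The plan is to assume, for contradiction, that $\gamma_1(\cM)\in\{2,3\}$ and the support of $\cM$ contains no plane, and then derive a contradiction. The two preceding auxiliary lemmas already do most of the structural work: Lemma~\ref{lemma_aux_card_15_4_div_1} fixes the hyperplane spectrum in terms of $a_3$ and forces $k\ge 5$, while Lemma~\ref{lemma_aux_card_15_4_div_2} produces $a_3$ lines $L_1,\dots,L_{a_3}$ concurrent at a point $B\in\operatorname{supp}(\cM)$ with $\cM|_{L_i}=\chi_{L_i}$, yields $\lambda_1\ge 2a_3+1$, and pins down $k=5$. With $k=5$ the spectrum becomes $a_3=(\lambda_2+3\lambda_3+2)/2$, $a_7=29-2a_3$, $a_{11}=2+a_3$.

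Substituting $\lambda_1=15-2\lambda_2-3\lambda_3$ into $\lambda_1\ge 2a_3+1=\lambda_2+3\lambda_3+3$ gives $\lambda_2+2\lambda_3\le 4$. Combined with $\lambda_2+\lambda_3\ge 1$ (from $\gamma_1\ge 2$) and the integrality of $a_3$ (which forces $\lambda_2+\lambda_3$ to be even, hence $\ge 2$), only four point-multiplicity profiles remain: $(\lambda_2,\lambda_3)\in\{(2,0),(4,0),(1,1),(0,2)\}$, giving $(a_3,\lambda_1)=(2,11),(3,7),(3,10),(4,9)$ respectively.

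Each profile is then ruled out by a local geometric argument built from three recurring tools. First, if $R$ is a point with $\cM(R)=3$, no mult-$3$ hyperplane $H$ can contain $R$, since otherwise $\cM|_H=3\chi_R$ would fail the $2$-divisibility guaranteed by Lemma~\ref{lemma_divisibility_hyperplane}; this severely restricts the position of the $R_j$ in profiles (C) and (D). Second, a plane $\langle L_i,L_j\rangle$ that contains a third concurrent line $L_k$ is fully covered by mult-$1$ points, contradicting the no-plane hypothesis, so for $a_3\ge 3$ the $L_i$ span at least a solid, and in the extremal profiles (B) and (D), where $\lambda_1=2a_3+1$, the union $L_1\cup\dots\cup L_{a_3}$ exhausts \emph{all} multiplicity-$1$ points of $\cM$. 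Third, for any two points $P_1,P_2$ of multiplicity $\ge 2$, each of the seven hyperplanes through $L=\langle P_1,P_2\rangle$ has multiplicity in $\{7,11\}$, so applying Lemma~\ref{lemma_lower_bound_space_mult} and the classifications of $2$-divisible multisets of cardinalities $5,6,7$ (Propositions~\ref{prop_2_div_card_5}--\ref{prop_2_div_card_7}) to $\cM|_H$ yields rigid structural information on $H$. When direct analysis is awkward, projecting through $B$ gives a $4$-divisible multiset $\cM_B$ of cardinality $14$ in $\PG(3,2)$ whose maximum point multiplicity equals $\max_{L\ni B}\cM(L)-1$, to be matched against the known classification of small $4$-divisible multisets via Propositions~\ref{prop_4_div_card_10}--\ref{prop_div_4_card_13}.

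The hardest sub-case will be profile (A), where $a_3=2$ forbids only one plane from the support and the two mult-$2$ points can sit essentially anywhere outside $\langle L_1,L_2\rangle$. There I expect the contradiction to come from a double count of the mult-$2$ points against the hyperplanes of $\PG(4,2)$ partitioned by their intersection multiplicity with the solid spanned by $L_1\cup L_2$, using that $\cM$ restricted to this solid is already $2$-divisible and that each hyperplane intersection is a plane of multiplicity $1$, $3$, or $5$; this mirrors, at a finer level, the parity argument at the end of the proof of Lemma~\ref{lemma_aux_card_15_4_div_2}.
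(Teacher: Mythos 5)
Your setup is sound and your reduction to four profiles is correct: with $k=5$ from Lemma~\ref{lemma_aux_card_15_4_div_2}, the identity $a_3=(2+\lambda_2+3\lambda_3)/2$ together with $\lambda_1\ge 2a_3+1$ and the parity of $\lambda_2+3\lambda_3$ does leave exactly $(\lambda_2,\lambda_3)\in\{(2,0),(4,0),(1,1),(0,2)\}$. But from that point on the proposal stops being a proof. The entire mathematical content of the proposition lies in actually eliminating these profiles, and you do not do this: you list ``three recurring tools'' and assert that each profile ``is then ruled out by a local geometric argument,'' and for the case you yourself identify as hardest you write only that you ``expect the contradiction to come from a double count.'' That is a research plan, not an argument. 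The paper's treatment shows the gap is real: it first pins down $\cM(E)=5$ for $E=\langle L_1,L_2\rangle$ (note $E$ is a \emph{plane}, not a solid as you write), uses the three hyperplanes through $E$ to force all points of multiplicity $\ge 2$ into the unique multiplicity-$11$ hyperplane among them and to conclude $(\lambda_1,\lambda_2,\lambda_3)=(11,2,0)$ with $(a_3,a_7,a_{11})=(2,25,4)$, and then needs a genuinely new idea to finish: the intersection $E'$ of any two of the four multiplicity-$11$ hyperplanes has $\cM(E')=7$, contains both double points and a line of three multiplicity-$1$ points (via Proposition~\ref{prop_2_div_card_5} applied in the third hyperplane through $E'$), and the $\binom{4}{2}=6$ resulting planes through the line $L'$ of the two double points force $\lambda_1\ge 1+6\cdot 2=13>11$. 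Nothing in your sketch supplies this step or an equivalent one.

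A secondary concern is that even the profiles you consider ``easier'' are not obviously dispatched by the tools you name. For instance in profile (D) ($\lambda_3=2$, $a_3=4$, $\lambda_1=9$), the natural first moves --- analyzing the line through the two triple points and the multiplicity-$7$ hyperplane containing it --- lead to a consistent-looking local configuration rather than an immediate contradiction, so a concrete argument is still required there too. I would also point out that your first tool (a multiplicity-$3$ hyperplane cannot contain a triple point) is already subsumed by Lemma~\ref{lemma_aux_card_15_4_div_2}, which says every multiplicity-$3$ hyperplane restricts to $\chi_{L_i}$ with all points of multiplicity~$1$. In summary: correct frame, correct and mildly different bookkeeping (four explicit profiles versus the paper's direct derivation of the single surviving one), but the decisive eliminations are missing.
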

\begin{proof}
  W.l.o.g.\ we assume $2\le \gamma_1(\cM)\le 3$ and that $\cM$ does not contain a plane in its support. Lemma~\ref{lemma_aux_card_15_4_div_2} states $k=5$ for the dimension of the 
  span of $\cM$, so that Equation~(\ref{card25_lambda}) yields $a_3\ge 2$. Using the notation from Lemma~\ref{lemma_aux_card_15_4_div_2} we consider the plane $E:=\left\langle L_1,L_2\right\rangle$ 
  with $\cM(E)\ge 5$. Due to Lemma~\ref{lemma_aux_card_15_4_div_1} $\cM(E)$ is odd. Since $\cM$ does not contain a plane in its support, we have $\cM(E)\in\{5,7\}$. If $\cM(E)=7$, then 
  there exists a point $P\le E$ with multiplicity $\cM(P)=2$. However, $E$ is contained in a hyperplane $H$ of multiplicity $7$ and $\cM|_H-2\cdot \chi_P-\chi_{L_1}=\chi_{L_2\backslash B}$ 
  is $2$-divisible -- contradiction. Thus, $\cM(E)=5$ and for the three hyperplanes $H_1,H_2,H_3$ containing $E$ we can assume w.l.o.g.\ $\cM(H_1)=7$, $\cM(H_2)=7$, and $\cM(H_3)=11$.
  Since $\cM|_{H_i}-\chi_{L_1}$ is $2$-divisible for $i=1,2$ we have $\gamma_1(H_i)=1$. Now let $P\le H_3$ be a point of multiplicity at least $2$, so that $\cM|_{H_3}-2\cdot\chi_P-\chi_{L_1}$ 
  is $2$-divisible with cardinality $6$, so that Proposition~\ref{prop_2_div_card_6} implies $\lambda_2+\lambda_3\ge 2$. With this we conclude $\lambda_1=11$, $\lambda_2=2$, $\lambda_3=0$, and 
  $\left(a_3,a_7,a_{11}\right)=(2,25,4)$. For two hyperplanes $H$, $H'$ of multiplicity $11$ let $E'$ denote their intersection. Counting points gives $\cM(E')\ge 7$. Since $\cM$ does not contain 
  a plane in its support, we have $\cM(E')=7$, so that the third hyperplane $H''$ containing $E'$ has multiplicity $7$ and contains a double point $P$. So, $\cM|_{H''}-2\cdot \chi_P$ is 
  $2$-divisible with cardinality $5$ and dimension at most $3$. Using Proposition~\ref{prop_2_div_card_5} we conclude that $E'$ contains both points of multiplicity $2$ and three points of 
  multiplicity $1$ that form a line. Let $L'$ be the line spanned by the two double points and $Q$ be the third point on the line, so that $\cM(Q)=1$ and $\cM(L')=5$. Each of ${4\choose 2}=6$ 
  pairs of hyperplanes of multiplicity $11$ yields a different plane $E'\ge L'$, so that $\lambda_1\ge 1+6\cdot 2=13$, which is a contradiction. 
\end{proof}
We remark that if a $4$-divisible multiset of points $\cM$ in $\PG(v-1,2)$ with cardinality $15$ contains a point $P$ with multiplicity at least $4$, then 
$\cM-4\cdot\chi_P$ is also $4$-divisible with cardinality $11$, so that Proposition~\ref{prop_4_div_card_11} implies the existence of a plane in the support of $\cM$. For 
$\gamma_1(\cM)=1$ the possibilities have been classified in \cite{honold2019classification}. Except for a single case all point sets also contain a plane in its support. We summarize 
the result in:
\begin{corollary} 
  \label{cor_4_div_card_15}
  Let $\cM$ be a $4$-divisible multiset of points in $\PG(v-1,2)$ with cardinality $15$. Then either there exists a plane $E$ and a $4$-divisible multiset of points $\cM'$ in $\PG(v-1,2)$ 
  with cardinality $8$ such that $\cM=\chi_E+\cM'$ or there exists a projective base $B_7$ of seven points and a point $P$ outside of $\langle B_7\rangle$ such that 
  $\cM(Q)=1$ iff there is a point $P'$ in $B_7$ with $Q\le \left\langle P',P\right\rangle$ and $\cM(Q)=0$ otherwise.
\end{corollary}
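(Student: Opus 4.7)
The plan is to feed the three alternatives of Proposition~\ref{prop_4_div_card_15} into corresponding cases of the corollary, and then deal with the $\gamma_1=1$ case via the external classification already mentioned in the paragraph preceding the statement. The first two cases are immediate bookkeeping; the last case is where the real content lives.

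Case $\gamma_1(\cM)\ge 4$: pick $P$ with $\cM(P)\ge 4$. Then $\cM-4\chi_P$ is a $4$-divisible multiset of cardinality $11$, so Proposition~\ref{prop_4_div_card_11} gives a plane $E$ and a point $P'$ with $\cM-4\chi_P=\chi_E+4\chi_{P'}$. Setting $\cM':=4\chi_P+4\chi_{P'}$, one obtains $\cM=\chi_E+\cM'$, and $\cM'$ is a $4$-divisible multiset of cardinality $8$ (it is an $8$-fold point or a pair of $4$-fold points).

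Case a plane $E$ with $\cM\ge\chi_E$ exists: put $\cM':=\cM-\chi_E$. Then $\cM'$ is nonnegative of cardinality $8$, and it is $4$-divisible as the difference of two $4$-divisible multisets (using that $\chi_E$ itself is $4$-divisible over $\F_2$, which is the statement of Corollary~\ref{cor_simplex_repetition_special} for $n=7$).

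Case $\gamma_1(\cM)=1$: here $\cM$ is a genuine $4$-divisible projective set of $15$ points, and I would invoke the classification of \cite{honold2019classification}. According to the remark preceding the corollary, that classification lists a finite number of isomorphism types, all but one of which contain a plane in their support; in the non-exceptional subcases one is therefore already reduced to the first conclusion of the corollary with $\cM'$ a $4$-divisible $8$-point set. The remaining, unique exceptional isomorphism type then has to be matched with the description in the statement: seven lines through a common point $P$, meeting a distinguished hyperplane $\langle B_7\rangle$ (not containing $P$) in the points of a projective base $B_7$. The verification that this configuration really gives a $4$-divisible set of $15$ points reduces to computing hyperplane weights in $\PG(7,2)$: a hyperplane either contains $P$, in which case it cuts each of the seven lines in a line and meets the configuration in $1+2\cdot(\#\{P'\in B_7:P'\in H\})$ points (always giving a multiplicity congruent to $1\bmod 4$, in $\{7,11,15\}$ by the projective-base property), or it avoids $P$, in which case it meets each of the seven lines in a single point and the multiplicity equals $\#\{P'\in B_7:P'\in H\}+\#\{R_{P'}\in H\}$, which again lands in a residue class divisible by $4$.

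The main obstacle is the last case, and specifically the matching of the exceptional example in the classification of \cite{honold2019classification} with the projective-base construction. The cleanest route I see is to argue intrinsically: from $\gamma_1=1$ and the absence of a plane in the support, derive (as in Lemmas~\ref{lemma_aux_card_15_4_div_1} and~\ref{lemma_aux_card_15_4_div_2}) strong spectral constraints that pin down a distinguished point $P$ on many short hyperplanes, then project through $P$ using Lemma~\ref{lemma_projection} to obtain a $4$-divisible multiset of cardinality $14$ in a hyperplane; structure of that projection together with $\gamma_1=1$ forces the seven residual points to form a projective base $B_7$, which recovers the stated description. If that intrinsic argument proves too long, I would simply cite \cite{honold2019classification} and verify by direct inspection that the single non-plane-containing isomorphism type it records coincides with the construction given.
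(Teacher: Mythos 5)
Your proposal is correct and follows essentially the same route as the paper: the paper derives the corollary from the three alternatives of Proposition~\ref{prop_4_div_card_15}, handling the $\gamma_1(\cM)\ge 4$ case by subtracting $4\cdot\chi_P$ and invoking Proposition~\ref{prop_4_div_card_11}, the plane case by setting $\cM'=\cM-\chi_E$, and the $\gamma_1(\cM)=1$ case by citing the classification in \cite{honold2019classification}, whose single exceptional type is the cone over a projective base $B_7$. Your additional musings about an intrinsic argument for the last case are not needed; the paper simply cites the external classification as you also propose as a fallback.
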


\begin{lemma}
  \label{lemma_4_div_card_16_special}
  Let $\cM$ be a $4$-divisible multiset of points in $\PG(v-1,2)$ with cardinality $16$ and $\gamma_1(\cM)\le 3$. Then, we have $\lambda_3\le 4$ and $\lambda_2+3\lambda_3\le 12$. 
  If $\lambda_3<4$, then $\lambda_3\le 2$.
\end{lemma}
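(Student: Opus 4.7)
The plan is to analyze the projection $\cM_P$ through each multiplicity-$3$ point $P$ and to exploit the classification of $4$-divisible multisets of cardinality $13$ from Proposition~\ref{prop_div_4_card_13}. Let $S_3:=\{P:\cM(P)=3\}$, so that $|S_3|=\lambda_3$. For every $P\in S_3$, Lemma~\ref{lemma_projection} shows that $\cM_P$ is $4$-divisible of cardinality $13$, so Proposition~\ref{prop_div_4_card_13} implies $\gamma_1(\cM_P)\le 3$ and that $\cM_P$ has at most three points of multiplicity~$3$. Translating back, every line $L\ni P$ satisfies $\cM(L)\le 6$, and for any other $P'\in S_3$ we have $\cM(\langle P,P'\rangle)=6$ with the third point of this line having multiplicity $0$. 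The $\lambda_3-1$ other points of $S_3$ thus yield distinct value-$6$ lines through $P$, and there are at most three such lines, giving $\lambda_3\le 4$.

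To rule out $\lambda_3=3$ I would argue as follows. The bound just used forces $\cM_P$ to contain at least two points of multiplicity~$3$ for every $P\in S_3$, which in Proposition~\ref{prop_div_4_card_13} only occurs for $\cM_P=\chi_E+2\chi_L$ with $L\le E$; in that case there are exactly three collinear multiplicity-$3$ points in $\cM_P$. Hence the three value-$6$ lines through $P$ share a common plane, and since two of them contain the other two points of $S_3$, this plane must equal $\pi:=\langle S_3\rangle$. Here $\pi$ has (vector) dimension $3$ since $\cM(L)\le 6$ forbids three collinear multiplicity-$3$ points. In the Fano plane $\pi$, writing $S_3=\{P_1,P_2,P_3\}$ and denoting by $M_i$ the third point on $\langle P_j,P_k\rangle$ (so $\cM(M_i)=0$) and by $C$ the centroid shared by the three medians, the third value-$6$ line through $P_i$ in $\pi$ is the median $\{P_i,M_i,C\}$, forcing $\cM(C)=3$ -- a fourth multiplicity-$3$ point, contradicting $|S_3|=3$.

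Finally, $\lambda_2+3\lambda_3\le 12$ rewrites as $\lambda_1+\lambda_2\ge 4$. For $\lambda_3\le 2$ this is immediate from $\lambda_1+\lambda_2=(16-3\lambda_3)-\lambda_2\ge(16-3\lambda_3)/2\ge 5$. The only remaining case is $\lambda_3=4$, where the bound forces $\lambda_2=0$; I would handle it by contradiction. Suppose $\cM(Q)=2$. By the same structural argument as above, $S_3$ lies in a common Fano plane $\pi$ with $\cM(\pi)=12$ whose points all have multiplicity $0$ or $3$, so $Q\notin\pi$. For each $P_i\in S_3$, $\cM_{P_i}$ is in case 1a with multiplicities in $\{0,1,3\}$, so $\cM(\langle P_i,Q\rangle)\neq 5$; combined with $5\le\cM(\langle P_i,Q\rangle)\le 6$ this forces $\cM(\langle P_i,Q\rangle)=6$, and the third point $R_i$ of this line has multiplicity~$1$. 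Since $Q\notin\pi$, the $R_i$ are pairwise distinct and all lie in $\langle\pi,Q\rangle\setminus(\pi\cup\{Q\})$, so the solid $S:=\langle\pi,Q\rangle$ satisfies $\cM(S)\ge\cM(\pi)+\cM(Q)+\sum_{i=1}^4\cM(R_i)=12+2+4=18>16=\#\cM$, a contradiction. The main obstacle is the Fano-plane calculation in the $\lambda_3=3$ case, where one must verify that the three value-$6$ lines through each $P_i$ are forced to be coplanar, that their common plane equals $\langle S_3\rangle$, and that the third of them is the median to the centroid, so that $\cM(M_i)=0$ propagates to the forbidden equality $\cM(C)=3$.
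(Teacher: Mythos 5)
Your proof is correct, and it takes a genuinely different route from the paper's. Where you project through each multiplicity-$3$ point to land on the cardinality-$13$ classification (Proposition~\ref{prop_div_4_card_13}) and then argue entirely inside the Fano plane $\pi=\langle S_3\rangle$, the paper instead works with differences rather than projections: it first shows that three points of multiplicity at least $2$ cannot be collinear (via $\cM-2\cdot\chi_L$ of cardinality $10$ and Corollary~\ref{cor_4_div_card_10}), splits off the case where four such points form an affine plane $A$ (analyzing $\cM-2\cdot\chi_A$ with Proposition~\ref{prop_multiples_of_affine_spaces}), and then kills $\lambda_3\ge 3$ in the remaining case by a dimension-by-dimension analysis $k\in\{4,5,6\}$ using the standard equations, Lemma~\ref{lemma_lower_bound_space_mult}, and hyperplane restrictions. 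Your approach buys a dimension-free argument: the observation that at most three lines through a multiplicity-$3$ point can have multiplicity $6$ gives $\lambda_3\le 4$ in one line, the centroid computation in $\pi$ excludes $\lambda_3=3$ cleanly, and the final count $\cM(\langle\pi,Q\rangle)\ge 18>16$ disposes of $\lambda_2>0$ when $\lambda_3=4$. The trade-off is that you lean heavily on Proposition~\ref{prop_div_4_card_13} (itself one of the longer classifications in the paper), and in particular on reading off from its two cases that a projection with at least two multiplicity-$3$ points must be $\chi_E+2\cdot\chi_L$ with $L\le E$ -- a step you carry out correctly since case~2 and the non-incident configurations of case~1 admit at most one such point. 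The paper's version, by contrast, extracts some extra structural information (e.g., that any three points of multiplicity at least $2$ span a plane) that is reused later, whereas your argument proves exactly the three numerical claims and nothing more.
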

\begin{proof}
  W.l.o.g.\ we assume $\gamma_1(\cM)=3$. Note that $\lambda_2\le\tfrac{16-3\lambda_3}{2}$ implies $\lambda_2+3\lambda_3\le 8+\tfrac{3\lambda_3}{2}\le 12.5$ for $\lambda_3\le 3$. 
  So, let $P_1,P_2,P_3$ be three arbitrary different points with multiplicity at least $2$. If they form a line $L$, then $\cM-2\cdot\chi_L$ is $4$-divisible with cardinality $10$, 
  so that Corollary~\ref{cor_4_div_card_10} yields a contradiction. Thus, any three points with multiplicity at least two span a plane $E$. If $E$ contains a fourth point of multiplicity 
  at least $2$, then they form an affine plane $A$, so that we can apply Proposition~\ref{prop_multiples_of_affine_spaces} to conclude the statement. Since there is no $4$-divisible multiset 
  of points with cardinality $[3]_2\cdot 3-\#\cM=7\cdot 3-16=5$, we have $k\ge 4$ for the dimension of the span of $\cM$. For the case $k=4$ consider a plane $E$ spanned by three 
  points of multiplicity $3$. Since $\cM(E)\ge 9$, we have $\cM(E)=12$. However, the three different lines spanned by the three pairs of the considered three points of multiplicity $3$ 
  have even cardinality and the third point has multiplicity at most $1$, so that $E$ contains three points with multiplicity $0$ that form a line $L'$. Since the fourth point in $E\backslash L'$ 
  has multiplicity at most $1$, we obtain the contradiction $\cM(E)\le 10$. Thus, we can assume $k\ge 5$ in the following.  

  Let $E$ be a plane spanned by three points of multiplicity $3$. Since $\cM(E)\ge 9$ we have $\cM(H)=12$ for every hyperplane $H$ containing $E$, so that 
  Lemma~\ref{lemma_lower_bound_space_mult} yields $\cM(E)=8+2^{6-k}$. An arbitrary line $L$ that is contained in hyperplanes of multiplicity $12$ only has multiplicity 
  $\cM(L)=8+2^{5-k}$, which is impossible. Now assume $k=5$ for a moment, so that $\cM(E)=10$ and $E$ contains an additional point $Q$ with multiplicity $1$. 
  If there would be a line $L\le E$ with multiplicity $7$, then for a hyperplane $H\ge L$ with multiplicity $8$ we would have that $\cM|_H-\chi_L$ is $2$-divisible with 
  cardinality $5$ containing two double points -- contradiction. Thus, the four points with non-zero multiplicity in $E$ form an affine plane, i.e., $\cM|_E$ is $2$-divisible.
  So, for each of the three hyperplanes $H$ containing $E$ we have $\cM(H)=12$ and $\cM|_H-\cM|_E$ is $2$-divisible with cardinality $2$, i.e., a double point. Thus, we conclude
  $\lambda_3=3$, $\lambda_2=3$, and $\lambda_1=1$. However, for a line $L$ spanned by two points $P',P''$ of multiplicity $3$ we have $\cM(L)=6$ there exists a hyperplane $H\ge L$ 
  with multiplicity $\cM(H)=8$. Thus, $\cM|_H-2\cdot\chi_{P'}-2\cdot\chi_{P''}$ is $2$-divisible with cardinality $4$ containing at least two points of multiplicity $2$, so
  that Proposition~\ref{prop_multiples_of_affine_spaces} yields the existence of two points with multiplicity $1$ outside of $L$. This contradicts $\lambda_1=1$ and it remains to consider 
  the case $k=6$. Here the plane $E$ spanned by three points of multiplicity $3$ has multiplicity $9$ and any solid containing $S$ has multiplicity $10$. Thus, we have 
  $\lambda_3=3$, $\lambda_2=0$, and $\lambda_1=7$. Now consider a line $L$ spanned by two points of multiplicity $3$, so that $\cM(L)=6$, and let $H\ge L$ be a hyperplane with multiplicity
  $8$, so that Proposition~\ref{prop_multiples_of_affine_spaces} yields that $\cM|_H$ spans a plane $E'$ and the four points of non-zero multiplicity form an affine plane. Let $S$ be 
  the solid spanned by $E'$ and the third point of multiplicity $3$, so that $\cM(S)\ge 8+3=11$, which is impossible for $k=6$.   
\end{proof}

\begin{lemma}
  \label{lemma_4_div_card_16_special2}
  Let $\cM$ be a $4$-divisible spanning multiset of points in $\PG(k-1,2)$ with cardinality $16$, $\gamma_1(\cM)=3$, and $\lambda_2\ge 2$. Then, we have 
  $\left(\lambda_1,\lambda_2,\lambda_3\right)\in\big\{(7,3,1),(6,2,2),(9,2,1)\big\}$.
\end{lemma}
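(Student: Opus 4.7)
The hypothesis $\lambda_2\ge 2$, combined with the bound $\lambda_2+3\lambda_3\le 12$ from Lemma~\ref{lemma_4_div_card_16_special} and the fact that $\gamma_1(\cM)=3$ implies $\lambda_3\ge 1$, narrows the situation to $\lambda_3\in\{1,2\}$. My approach would be to fix a point $Q$ with $\cM(Q)=3$ and project through it. By Lemma~\ref{lemma_projection}, the projection $\cM_Q$ is a spanning $4$-divisible multiset in $\PG(k-2,2)$ of cardinality $13$, where $k$ is the span dimension of $\cM$. Proposition~\ref{prop_div_4_card_13} then gives exactly two possible structures: (a) $\cM_Q=\chi_{E'}+2\chi_{L'}$ for some plane $E'$ and line $L'$, or (b) the ``projective base $B_5$ plus apex'' configuration with exactly one mult-$3$ point of $\cM_Q$ and no mult-$2$ point.

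The next step would be to translate the mult-$2$ points of $\cM$ into features of $\cM_Q$. For each mult-$2$ point $P\ne Q$, the constraint ``no line contains three points of multiplicity $\ge 2$''---established inside the proof of Lemma~\ref{lemma_4_div_card_16_special} by applying Corollary~\ref{cor_4_div_card_10} to $\cM-2\chi_L$---forces the third point on $\langle Q,P\rangle$ to have multiplicity at most $1$, so $\cM(\langle Q,P\rangle)\in\{5,6\}$; these values correspond respectively to a mult-$2$ and a mult-$3$ point of $\cM_Q$. Case (b) offers only one such point in total, giving $\lambda_2\le 1$ and contradicting the hypothesis, so $\cM_Q$ must take the form in case (a).

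A short case check on the three incidence types of $L'$ with $E'$ (namely $L'\subseteq E'$, $|L'\cap E'|=1$, and $L'\cap E'=\emptyset$) shows that the number of mult-$3$ plus mult-$2$ points of $\cM_Q$ equals $|L'|=3$ in every configuration. Each mult-$6$ line $\langle Q,P\rangle$ of $\cM$ contributes at most one mult-$2$ point of $\cM$, and this contribution is zero precisely when the third point has multiplicity $3$ (the ``$(0,3)$-split''); each mult-$5$ line similarly contributes at most one. This already yields $\lambda_2\le 3$. For $\lambda_3=2$, the line $\langle Q,Q'\rangle$ joining the two mult-$3$ points must have its third point of multiplicity $0$, since otherwise $\cM_Q$ would contain a point of multiplicity $4$, incompatible with case (a); hence this line is a $(0,3)$-split mult-$6$ line contributing zero mult-$2$ points, and the bound sharpens to $\lambda_2\le 2$. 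Combined with $\lambda_1+2\lambda_2+3\lambda_3=16$, these bounds leave exactly $(\lambda_1,\lambda_2,\lambda_3)\in\{(9,2,1),(7,3,1),(6,2,2)\}$.

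The main obstacle is the sub-case analysis inside case (a): although the invariant ``the number of mult-$3$ plus mult-$2$ points of $\cM_Q$ equals $|L'|$'' holds uniformly, carefully tracking which lines $\langle Q,P\rangle$ realize which splits, and then assembling the final count of $\lambda_2$, requires handling the three geometric sub-configurations (which, via Lemma~\ref{lemma_projection}, correspond to distinct values of $k$) separately.
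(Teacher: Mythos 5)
Your argument is correct, and it takes a genuinely different route from the paper's. You project through a triple point $Q$ and invoke the classification of $4$-divisible multisets of cardinality $13$ (Proposition~\ref{prop_div_4_card_13}): the projective-base alternative is excluded by $\lambda_2\ge 2$, since distinct points of $\cM$ of multiplicity at least $2$ project to distinct such points of $\cM_Q$ by the no-three-heavy-points-on-a-line rule (itself a consequence of Corollary~\ref{cor_4_div_card_10} applied to $\cM-2\chi_L$, exactly as in the paper); in the remaining alternative the points of $\cM_Q$ of multiplicity at least $2$ are precisely the three points of $L'$, so exactly three lines through $Q$ have multiplicity at least $5$ and each carries at most one further point of multiplicity at least $2$. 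This yields $\lambda_2+\lambda_3\le 4$ in one stroke, and together with $\lambda_2\ge 2$, $\lambda_3\ge 1$ and $\lambda_1+2\lambda_2+3\lambda_3=16$ only the three stated triples survive. The paper instead splits on whether four heavy points are coplanar (in which case Proposition~\ref{prop_multiples_of_affine_spaces} applies and yields $(7,3,1)$ or $(6,2,2)$) or not; in the latter case it takes the plane $E$ spanned by a triple point and two double points, forces $\cM(H)=12$ for every hyperplane $H\ge E$, and uses Lemma~\ref{lemma_lower_bound_space_mult} together with the standard equations to pin down $k\in\{5,6\}$, eliminate $k=5$, and obtain $\lambda_2+\lambda_3=3$ for $k=6$, i.e., $(9,2,1)$. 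Your route is shorter and avoids the spectrum computations entirely, at the price of leaning on the full cardinality-$13$ classification, which the paper has available in any case. One remark: the ``main obstacle'' you flag at the end is not actually an obstacle. The invariant that the heavy points of $\cM_Q$ are exactly $L'$ holds uniformly over the three incidence types of $L'$ with $E'$, and your count never needs to distinguish them (nor the corresponding values of $k$); the only additional input, namely that for $\lambda_3=2$ the line joining the two triple points is one of the three heavy lines and carries no double point, already follows from the no-three-heavy-points rule. So what you have written is essentially a complete proof, not just a plan.
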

\begin{proof}
  Let $P_1,P_2,P_3$ be three arbitrary different points with multiplicity at least $2$. If they form a line $L$, then $\cM-2\cdot\chi_L$ is $4$-divisible with cardinality $10$, 
  so that Corollary~\ref{cor_4_div_card_10} yields a contradiction. Thus, any three points with multiplicity at least two span a plane $E$. If $E$ contains a fourth point of multiplicity 
  at least $2$, then they form an affine plane $A$, so that we can apply Proposition~\ref{prop_multiples_of_affine_spaces} to conclude the statement. Assume $\cM(P_1)=3$,  
  $\cM(P_2)=\cM(P_3)=2$, and that each plane contains at most three points of multiplicity at least $2$ in the following. Since there is no $4$-divisible multiset of cardinality $9$, 
  $E$ contains at least one point with multiplicity $0$, so that $7\le \cM(E)\le 10$. Since $\cM|_H$ is $2$-divisible for any hyperplane $H$, we have $\cM(H)=12$ if $H\ge E$, so that 
  Lemma~\ref{lemma_lower_bound_space_mult} yields $\cM(E)=8+2^{6-k}$, which implies 
  $\cM(E)\in \{9,10\}$ and $k\in\{5,6\}$. Since $E$ contains only three points of multiplicity at least $2$, we have $a_0=0$. With this, we conclude 
  $\lambda_2+3\lambda_3=2^{6-k}\cdot\left(3+a_{12}\right)-8$. Let $Q$ be an arbitrary point outside of $E$ and $S$ be the solid spanned by $Q$ and $E$. If $k=5$, then $\cM(S)=12$ and 
  $\cM(E)=10$ implies $\cM(Q)\le 2$, so that $\lambda_3=1$. Using $\lambda_2+3\lambda_3=2a_{12}-2$ we conclude $\lambda_2\equiv 1\pmod 2$, so that $\lambda_2\ge 3$. Now consider a 
  plane $E'$ spanned by three points of multiplicity $2$. Since $E'$ does not contain a fourth point of multiplicity at least $2$, we conclude that every hyperplane $H'\ge E'$ has 
  multiplicity $12$. As before, we can conclude $\cM(E')=10$, which then implies $\cM(P)\ge 1$ for all $P\le E$ -- contradiction. It remains to consider the case $k=6$ where 
  $\cM(E)=9$ and $\cM(S)=10$ for every solid $S\ge E$, so that $\lambda_2+\lambda_3=3$.    
\end{proof}

 \begin{lemma}
  \label{lemma_4_div_card_17_aux}
  Let $\cM$ be a $4$-divisible multiset of points in $\PG(v-1,2)$ with cardinality $17$ and $2\le \gamma_1(\cM)\le 3$. Then, we have  
  \begin{eqnarray*}
    a_5 &=& 2^{k-3}+2+a_{13},\\\ 
    a_9 &=& 7\cdot 2^{k-3}-3-2a_{13},\\
    \lambda_2+3\lambda_3 &=& -5+2^{6-k}\cdot\left(3+a_{13}\right),
  \end{eqnarray*}
  and $k\ge 4$. Moreover, each three points of multiplicity at least $2$ span a plane and each four points of multiplicity at least $2$ span a solid.
\end{lemma}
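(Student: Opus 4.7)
The plan is to first derive the spectral formulas from the standard equations, then rule out $k=3$ by a complement argument in $\PG(2,2)$, and finally establish the two geometric claims by subtracting a small $4$-divisible pattern. Since $\cM$ is $4$-divisible of cardinality $17$, hyperplane multiplicities are $\equiv 1 \pmod 4$, and $\gamma_1(\cM)\le 3$ restricts them to $\{1,5,9,13\}$. A hyperplane $H$ with $\cM(H)=1$ would restrict to a single point of odd cardinality, contradicting the $2$-divisibility guaranteed by Lemma~\ref{lemma_divisibility_hyperplane}, so $a_1=0$. Solving (\ref{se1}) and (\ref{se2}) for $a_5$ and $a_9$ in terms of $a_{13}$ gives the first two formulas; plugging these into (\ref{se5}), whose right-hand side collapses to $\binom{17}{2}(2^{k-2}-1)+2^{k-2}(\lambda_2+3\lambda_3)$ using $\gamma_1\le 3$, and isolating $\lambda_2+3\lambda_3$ yields the third formula.

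For $k\ge 4$, note that $k=3$ would place $\cM$ in $\PG(2,2)$; there every line carries at most $3\cdot 3=9$ points counted with multiplicity, so $a_{13}=0$ and hyperplane multiplicities lie in $\{5,9\}$. The complement $\cM':=3\chi_{\PG(2,2)}-\cM$ then has cardinality $21-17=4$ and line multiplicities $9-\cM(L)\in\{0,4\}$, so it is $4$-divisible; but Corollary~\ref{cor_simplex_repetition_special} forces $\cM'$ to be a $4$-fold point, which would give a point with $\cM$-multiplicity $-1$, a contradiction. For the three-point claim, if three points of multiplicity at least $2$ are collinear on a line $L$, then $\cM\ge 2\chi_L$ and $\cM-2\chi_L$ is $4$-divisible of cardinality $11$ with $\gamma_1\le 3$, contradicting Proposition~\ref{prop_4_div_card_11}, which forces such a multiset to contain a $4$-fold point.

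Finally, suppose four points of multiplicity at least $2$ all lay in some plane $E$. The three-point claim just established forbids three of them being collinear, and a short enumeration in $\PG(2,2)$ shows that any $4$ points with no three collinear are exactly the complement of some line $L''\le E$, i.e., they form an affine plane $A$ (indeed, if the $3$-point complement of our $4$-set were not collinear, a counting argument shows some line of $E$ would miss all three complement points and thus lie entirely inside the $4$-set). A direct weight count shows $\chi_A$ is $2$-divisible (weight enumerator $1+6x^2+x^4$), hence $2\chi_A$ is $4$-divisible of cardinality $8$; then $\cM-2\chi_A$ would be $4$-divisible of cardinality $9$, contradicting $\Gamma_2(4,9)=\infty$ from Proposition~\ref{prop_Gamma_infty}. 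The main obstacle is this last step: identifying the affine-plane structure of the four coplanar points and verifying that its doubled indicator is already $4$-divisible; once these are in hand, the reduction to the non-existence of $4$-divisible multisets of cardinality $9$ closes the argument.
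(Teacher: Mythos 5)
Your proposal is correct and takes essentially the same route as the paper: the spectrum formulas from the standard equations after excluding hyperplane multiplicity $1$, the complement $\cM'(P)=3-\cM(P)$ to rule out $k=3$, subtracting $2\chi_L$ and invoking Proposition~\ref{prop_4_div_card_11} for the three-point claim, and subtracting $2\chi_{E\backslash L}$ to reach the impossible cardinality $9$ for the four-point claim. The only difference is that you spell out the step the paper leaves implicit (four coplanar points, no three collinear, form an affine plane whose doubled indicator is $4$-divisible), which is a correct and welcome addition rather than a deviation.
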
 
\begin{proof}
  Since no $2$-divisible multiset of points of cardinality $1$ exists, the multiplicities of the hyperplanes are contained in $\{5,9,13\}$. From the standard equations we compute the 
  stated equations. Clearly we have $k\ge 3$. If $k=3$, then $\cM'$ defined by $\cM'(P)=3-\cM(P)$ would be $4$-divisible with $\#\cM'=4$ and $\gamma_1\!\left(\cM'\right)\le 3$ -- contradiction. 
  If $L'$ is a line with $\cM\ge 2\cdot\chi_{L'}$, then $\cM-2\cdot\chi_{L'}$ would be $4$-divisible with cardinality $11$, so that Proposition~\ref{prop_4_div_card_11}
  implies $\gamma_1(\cM)\ge 4$ -- contradiction. In other words each three points of multiplicity at least $2$ span a plane. If $E$ is a plane and $L\le E$ a line with
  $\cM\ge 2\cdot \chi_{E\backslash L}$, then $\cM-2\cdot \chi_{E\backslash L}$ would be $4$-divisible with cardinality $9$ -- contradiction. Thus, any four points of multiplicity 
  at least $2$ span a solid. 
\end{proof}

\begin{lemma}
  \label{lemma_4_div_card_17_special}
  Let $\cM$ be a $4$-divisible multiset of points in $\PG(v-1,2)$ with cardinality $17$ and $2\le \gamma_1(\cM)\le 3$. Then there exists a point $P$ with $\cM(P)=2$, $k\ge 6$, 
  or $\lambda_3=1$.
\end{lemma}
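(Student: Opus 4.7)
I will argue by contradiction, assuming $\lambda_2=0$, $k\le 5$, and $\lambda_3\ne 1$; combined with $\gamma_1(\cM)\in\{2,3\}$, this forces $\gamma_1=3$ and hence $\lambda_3\ge 2$. Lemma~\ref{lemma_4_div_card_17_aux} gives $k\in\{4,5\}$, and its identity $\lambda_2+3\lambda_3=-5+2^{6-k}(3+a_{13})$ together with $\lambda_2=0$ and $3\lambda_3\le 17$ leaves only the sub-cases $(k,\lambda_3)\in\{(4,5),(5,3),(5,5)\}$.

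For $(4,5)$, Lemma~\ref{lemma_4_div_card_17_aux} (no three collinear, no four coplanar) forces the five mult-$3$ points to form the unique ovoid $\mathcal{O}$ in $\PG(3,2)$; each plane meets $\mathcal{O}$ in $1$ (tangent, five in number) or $3$ (secant, ten in number) points, so $\cM(H)=3+\varepsilon$ or $9+\varepsilon$ with $\varepsilon\in\{0,1,2\}$ the count of the two mult-$1$ points in $H$, and $\cM(H)\in\{5,9,13\}$ then forces every tangent plane to contain both mult-$1$ points, contradicting that they lie together in only $[2]_2=3<5$ planes. For $(5,5)$, after fixing $P_i=e_i$ the four-point-span property of Lemma~\ref{lemma_4_div_card_17_aux} yields the five solids $S_i=\{x:x_i=0\}$ with $\cM(S_i)\ge 12$ and thus $\cM(S_i)=13$ by divisibility, so each contains exactly one of the two mult-$1$ points $Q_\ell=\sum_{j\in S_\ell}e_j$; incidence counting then pins $|S_1|+|S_2|=5$ and $S_1\cap S_2=\emptyset$, hence up to symmetry $|S_1|=2,|S_2|=3$. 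Any size-$3$ set $T\supseteq S_1$ gives a plane $E_T=\langle e_j:j\in T\rangle$ containing $Q_1$ but not $Q_2$, so $\cM(E_T)=10$; but the codim-$2$ identity $\sum_{H\supseteq E_T}\cM(H)=2\cM(E_T)+17$ together with $\cM(H)\in\{5,9,13\}$ forces $\cM(E_T)$ to be odd, a contradiction.

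The main obstacle is $(5,3)$, where $a_{13}=4$. For the plane $E=\langle P_1,P_2,P_3\rangle$, the codim-$2$ parity forces $\cM(E)$ odd, and $9\le\cM(E)\le 13$. A hyperplane-sum argument rules out $\cM(E)=13$, and for $\cM(E)=9$ a mult-$9$ hyperplane $H\supseteq E$ has $\cM|_H=3(\chi_{P_1}+\chi_{P_2}+\chi_{P_3})$, whose $2$-divisibility in $H$ (Lemma~\ref{lemma_divisibility_hyperplane}) fails on any plane of $H$ meeting $E$ along $\langle P_i,P_j\rangle$ (multiplicity $6$, violating the required odd parity). Thus $\cM(E)=11$, all three hyperplanes through $E$ have multiplicity $13$, and a single further mult-$13$ hyperplane remains. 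I finish with a coordinate analysis: take $P_i=e_i$ for $i=1,2,3$ and let the two mult-$1$ points $R_1,R_2\in E$ have supports $S_\ell\subseteq\{1,2,3\}$, so up to $\mathfrak{S}_3$-symmetry either $|S_1|=|S_2|=2$ or $\{|S_1|,|S_2|\}=\{2,3\}$. Using the identity that each of the six mult-$1$ points outside $E$ lies in exactly two of any family $\{H_{e_i+\beta}:\beta\in\langle e_4,e_5\rangle\}$, the divisibility constraints either force two extra mult-$13$ hyperplanes among those with $|\operatorname{supp}(\alpha)\cap\{1,2,3\}|=1$ (contradicting $a_{13}=4$) or produce an incompatible total across the four hyperplanes with $|\operatorname{supp}(\alpha)\cap\{1,2,3\}|=3$. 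Finishing this last coordinate calculation cleanly is the step that I expect to require the most care.
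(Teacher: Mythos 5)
Your reduction is sound: negating the conclusion gives $\lambda_2=0$, $k\le 5$, $\lambda_3\ne 1$, hence $\gamma_1=3$ and $\lambda_3\ge 2$, and the spectrum identity from Lemma~\ref{lemma_4_div_card_17_aux} correctly isolates the sub-cases $(k,\lambda_3)\in\{(4,5),(5,3),(5,5)\}$. Your treatments of $(4,5)$ (the five triple points form a frame/ovoid of $\PG(3,2)$, and the five tangent planes cannot all contain the line spanned by the two single points) and of $(5,5)$ (the coordinate solids $S_i$ have multiplicity $13$, forcing the two single points to have complementary supports of sizes $2$ and $3$, and then a plane $E_T$ of even multiplicity $10$ violates the mod-$4$ congruence $\sum_{H\ge E_T}\cM(H)=2\cM(E_T)+17$) are both correct and, while phrased differently from the paper, are of comparable difficulty. (The paper handles $(4,5)$ by showing via Proposition~\ref{prop_2_div_card_5} that all $a_5=6$ hyperplanes of multiplicity $5$ must contain the line through the two single points, and it absorbs $(5,5)$ into the $(5,3)$ analysis by showing $\cM(E)=11$ forces $\lambda_3=3$.)

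The genuine gap is the main case $(5,3)$. You correctly derive $\cM(E)=11$, that all three hyperplanes through $E$ have multiplicity $13$, and that $a_{13}=4$, but the concluding ``coordinate analysis'' is only an unexecuted sketch: the hyperplanes $H_\alpha$ and the ``identity'' about the six exterior single points are never defined or proved, the case split is not carried out, and you explicitly flag that finishing it ``is the step that I expect to require the most care.'' As it stands no contradiction has actually been derived, so the lemma is not proved. For comparison, the paper closes this case without coordinates: since $E$ contains three triple points, two single points and only two empty points, some line $L\le E$ through two of the $P_i$ has third point of multiplicity $1$, hence $\cM(L)=7$; every hyperplane through $L$ then has multiplicity in $\{9,13\}$, and multiplicity $9$ is excluded because $\cM|_H-2\chi_{P_i}-2\chi_{P_j}$ would be a $2$-divisible multiset of cardinality $5$ containing the three collinear points of $L$, which by Proposition~\ref{prop_2_div_card_5} forces either a double point (contradicting $\lambda_2=0$) or a point multiplicity exceeding $3$; hence all $[3]_2=7$ hyperplanes through $L$ have multiplicity $13$, contradicting $a_{13}=4$. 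You would need to supply an argument of this kind (or actually complete your coordinate computation) before the proof can be accepted.
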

\begin{proof}
  Assuming $\lambda_2=0$ the standard equations yield
  \begin{eqnarray*}
    a_5 &=& 2^{k-3}+2+a_{13},\\ 
    a_9 &=& 7\cdot 2^{k-3}-3-2a_{13},\text{ and}\\
    3\lambda_3 &=& \left(3+a_{13}\right)\cdot 2^{6-k}-5,
  \end{eqnarray*}
  where $k$ denotes the dimension of the span of $\cM$. If $k\le 3$, then $\lambda_3\le 5$ implies $a_{13}<0$  -- contradiction. If $k=4$, then $\lambda_3\equiv 1\pmod 4$, so that we can assume $\lambda_3=5$. With this we have $a_5=6$, $a_9=7$, $a_{13}=2$, and $\lambda_1=2$. Let $L$ be the line spanned by the two points of multiplicity one. From Proposition~\ref{prop_2_div_card_5} we conclude that every hyperplane of multiplicity $5$ has to contain $L$. However, $L$ is contained in three hyperplanes only -- contradiction.

  Finally, assume $k=5$ and $\lambda_3\ge 2$.
  If $\lambda_3=2$, then $3\lambda_3 = \left(3+a_{13}\right)\cdot 2^{6-k}-5$ would imply that $a_{13}$ is fractional. So, let $P_1,P_2,P_3$ be different points with multiplicity $3$. 
  They cannot form a line $L$ since $\cM-2\cdot\chi_L$ would be $4$-divisible with cardinality $11$ but does not 
  contain a point of multiplicity at least $4$, which contradicts Proposition~\ref{prop_4_div_card_11}. So, let $E$ be the plane spanned by $P_1$, $P_2$, and $P_3$. Clearly every hyperplane 
  $H$ containing $E$ has multiplicity at least $9$. However, multiplicity $9$ is impossible, since otherwise $\cM|_H-\sum_{i=1}^3 2\cdot \chi_{P_i}$ would be $2$-divisible of cardinality $3$, so that 
  Corollary~\ref{cor_simplex_repetition_special} yields that $P_1$, $P_2$, $P_3$ form a line -- contradiction. Thus, we have $\cM(E)=11$, which implies $\lambda_3=3$, $\lambda_1=8$, $a_{13}=4$, 
  $a_5=10$, and $a_9=17$. Now let $L\le E$ be a line with multiplicity $7$, so that $\cM|_L$ is $2$-divisible and all $7$ hyperplanes containing $L$ have multiplicity $13$ -- contradiction.
\end{proof}

\begin{lemma}
  \label{lemma_4_div_card_17_special_2}
  Let $\cM$ be a $4$-divisible multiset of points in $\PG(v-1,2)$ with cardinality $17$ and $2\le \gamma_1(\cM)\le 3$. If there exists a line $L$ consisting of three 
  points $P_i$ with $\cM\!\left(P_i\right)=i$ for $1\le i\le 3$, then we have $k=5$, $\lambda_1=9$, $\lambda_2=1$, $\lambda_3=2$, $a_{13}=3$, $a_9=19$, and $a_5=9$. 
  Up to symmetry a unique representation of $\cM$ is given by the columns of 
  $$
    \begin{pmatrix}
    111 & 1 & 111 & 1 & 00 & 0 & 00 & 00 & 00 \\
    000 & 1 & 111 & 0 & 11 & 0 & 00 & 00 & 11 \\
    000 & 0 & 111 & 1 & 00 & 1 & 01 & 01 & 01 \\
    000 & 0 & 000 & 0 & 00 & 0 & 11 & 00 & 11 \\
    000 & 0 & 000 & 0 & 00 & 0 & 00 & 11 & 11 \\ 
    \end{pmatrix}.
  $$ 
\end{lemma}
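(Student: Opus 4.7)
The plan is to progressively constrain $\cM$ using the rigid structure imposed by the line $L$ with multiplicities $(1,2,3)$: first bound the dimension $k$, then the spectrum, then the multiplicity profile $(\lambda_1,\lambda_2,\lambda_3)$, and finally pin down the configuration. Since $\cM(L) = 6$ exceeds the smallest hyperplane multiplicity $5$, every hyperplane through $L$ has multiplicity $9$ or $13$; writing $h_9, h_{13}$ for these counts, the incidence identities
\[
h_9 + h_{13} = 2^{k-2}-1, \qquad 9 h_9 + 13 h_{13} = 6(2^{k-2}-1) + 11(2^{k-3}-1)
\]
yield $h_{13} = (5 \cdot 2^{k-3} - 8)/4$. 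Integrality rules out $k = 4$, $\cM(L) = 6 \not\equiv 17 \pmod 4$ forbids $L$ from being a hyperplane (so $k \ne 3$), and Lemma~\ref{lemma_4_div_card_17_aux} supplies $k \ge 4$; together these force $k \ge 5$.

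The central rigidity is the explicit form of $\cM|_H$ on every mult-$9$ hyperplane $H \supset L$: since $\cM|_H - \chi_L - 2\chi_{P_3}$ is $2$-divisible of cardinality $4$, Proposition~\ref{prop_2_div_card_4} applies, and the alternative $2\chi_Q + 2\chi_{Q'}$ is incompatible with $P_2$ having odd remainder multiplicity $1$; hence $\cM|_H = \chi_L + 2\chi_{P_3} + \chi_{E'_H \setminus L'_H}$, where $E'_H \subset H$ is a plane meeting $L$ exactly at $P_2$ and $L'_H \subset E'_H$ is a line avoiding $P_2$. Consequently the only mult-$\ge 2$ points in $\cM|_H$ are $P_2$ and $P_3$, so no other mult-$\ge 2$ point of $\cM$ lies in any mult-$9$ hyperplane through $L$, and for each such point $Q$ all $[k-3]_2$ hyperplanes through $\langle Q, L\rangle$ must be mult-$13$. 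For $k \ge 6$ a case analysis on $(\lambda_2, \lambda_3)$, combined with the spectrum equation from Lemma~\ref{lemma_4_div_card_17_aux} and its consequence that four mult-$\ge 2$ points must span a solid, produces contradictions and establishes $k = 5$. I expect this to be the main obstacle in fleshing out the proof.

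Once $k = 5$ is fixed, $h_{13} = 3$ and Lemma~\ref{lemma_4_div_card_17_aux} gives $\lambda_2 + 3\lambda_3 = 2a_{13}+1$ with $(a_5,a_9) = (6+a_{13}, 25-2a_{13})$. With $\lambda_3 \ge 1$ and $\lambda_2 \ge 1$, the same rigidity rules out $(\lambda_2,\lambda_3) = (4,1)$: each of the three extra mult-$2$ points $Q_1, Q_2, Q_3$ would force all three mult-$13$ hyperplanes through $L$ to contain $\langle Q_i, L\rangle$, so $Q_1, Q_2, Q_3$ would all lie in the common plane $\pi = \langle L, Q_1\rangle$; but then $\{P_2, Q_1, Q_2, Q_3\} \subset \pi$ is four mult-$\ge 2$ points sitting in a plane rather than spanning a solid, contradicting Lemma~\ref{lemma_4_div_card_17_aux}. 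Hence $(\lambda_1,\lambda_2,\lambda_3) = (9,1,2)$, $a_{13} = 3$, and $(a_5,a_9,a_{13}) = (9,19,3)$.

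Finally, let $B$ denote the second mult-$3$ point. Since $B$ would be a second mult-$3$ point in any mult-$9$ hyperplane through $L$ that contained it, the rigidity forbids this, so the three mult-$13$ hyperplanes through $L$ are exactly the three hyperplanes through $E := \langle L, B\rangle$. The identity $\sum_{H \supset E} \cM(H) = \#\cM + 2\cM(E)$ then yields $\cM(E) = 11$, which pins down $\cM|_E$ uniquely, and the nine remaining mult-$1$ points outside $E$ are determined by enforcing the explicit form $\chi_L + 2\chi_{P_3} + \chi_{E'_H \setminus L'_H}$ on the four mult-$9$ hyperplanes through $L$ together with the corresponding constraints on the hyperplanes transverse to $L$. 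Choosing the basis $P_3 = e_1$, $P_2 = e_2$, $B = e_1+e_2+e_3$ and completing with $e_4, e_5$ realizes the unique configuration as the matrix displayed in the lemma.
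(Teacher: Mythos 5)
Your opening moves are sound and essentially track the paper's: the integrality count for hyperplanes through $L$ is a valid variant of the paper's parity argument for codimension-two subspaces, and your ``rigidity'' on multiplicity-$9$ hyperplanes through $L$ (via Proposition~\ref{prop_2_div_card_4}) is the same observation the paper makes by noting that $\cM|_H-\chi_L-2\cdot\chi_{P_3}-2\cdot\chi_Q$ cannot be a double point. But there are two genuine gaps. First, the exclusion of $k\ge 6$ is precisely where you write that you ``expect this to be the main obstacle,'' and it is: this is the longest and hardest part of the paper's proof. For $k=6$ one gets $\cM(E)=10$ for $E=\langle L,Q\rangle$ and multiplicity $11$ for every solid above $E$; the contradiction is then extracted by pairing the three residual unit points of $\cM|_E$ with pairs of unit points outside $E$, invoking Proposition~\ref{prop_2_div_card_6} to see that each resulting sextuple is either two disjoint lines or a projective base of size $6$, eliminating the two-lines alternative via Proposition~\ref{prop_4_div_card_10}, and finally a coordinate computation with projective bases of size $6$ showing that more than seven unit points would be forced outside $E$. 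A ``case analysis on $(\lambda_2,\lambda_3)$'' does not contain the idea that makes this work. Second, your rigidity only constrains multiplicity-$\ge 2$ points \emph{other than} $P_2,P_3$, so it says nothing when $\lambda_2=\lambda_3=1$. For $k=5$ that case dies by the parity of $\lambda_2+3\lambda_3=1+2a_{13}$, but for $k\ge 6$ it is consistent with all your equations and you never address it; the paper kills it by projecting through $P_2$, so that $P_1$ and $P_3$ merge into a $4$-fold point $P'$, and applying Proposition~\ref{prop_4_div_card_11} to $\cM_{P_2}-4\cdot\chi_{P'}$.

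A smaller but real issue is the endgame. With $(\lambda_1,\lambda_2,\lambda_3)=(9,1,2)$ and $\cM(E)=11$ for $E=\langle L,B\rangle$, the restriction $\cM|_E$ is \emph{not} yet pinned down: the points $P_1,P_2,P_3,B$ account for multiplicity $9$, and the remaining mass $2$ must be placed as two unit points among the other three points of $E$. One of these is forced by the projection through $P_2$ together with Proposition~\ref{prop_4_div_card_11}, but deciding which of the last two candidates carries the final unit requires analysing how the three external lines of unit points meet $E$ (the paper's argument that $Z_i=A$ for all $i$). Your phrase ``enforcing the explicit form on the four mult-$9$ hyperplanes'' gestures at this without carrying it out. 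Likewise, your $k=5$ multiplicity analysis explicitly rules out only $(\lambda_2,\lambda_3)=(4,1)$; you still need to check that no other solution of $\lambda_2+3\lambda_3=1+2a_{13}$ with $a_{13}\ge 3$ survives the constraint that all extra multiplicity-$\ge 2$ points lie in a single plane through $L$ and that any four such points span a solid.
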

\begin{proof}
  Since $\cM(L)=6\not\equiv \#\cM \pmod 2$, Lemma~\ref{lemma_divisibility_hyperplane} implies $k\ge 5$. Assume that $Q$ is another point with $\cM(Q)\ge 2$ and consider the
  plane $E$ spanned by $L$ and $Q$. If $H\ge E$ is a hyperplane with multiplicity $9$, then $\cM':=\cM|_H-\chi_L-2\cdot \chi_{P_3}-2\cdot\chi_Q$ would be $2$-divisible with cardinality 
  $2$, so that
  $\cM'=2\cdot\chi_P$ for some point $P$. However, we have $\cM'\!\left(P_2\right)=1$ -- contradiction. Thus, every
  hyperplane $H\ge E$ has multiplicity $13$, so that Lemma~\ref{lemma_lower_bound_space_mult} yields $\cM(E)=9+2^{6-k}$. 
  
  If $k=5$, then $\cM(E)=11$ and the other six planes $E'$ containing $L$ have multiplicity $7$, so that all points of multiplicity at least $2$ are contained in $E$. Lemma~\ref{lemma_4_div_card_17_aux} then yields $\lambda_2+\lambda_3=3$,
  so that $\lambda_2+3\lambda_3 = -5+2^{6-k}\cdot\left(3+a_{13}\right)$ and $a_{13}\ge 3$ implies $\lambda_2=1$, $\lambda_3=2$, $\lambda_1=9$, and $a_{13}=3$. Using the equations 
  in Lemma~\ref{lemma_4_div_card_17_aux} we then compute $a_9=19$ and $a_5=9$. 
  
  If $k=6$, then $\cM(E)=10$ and each solid $S\ge E$ has multiplicity $11$. Thus, as before, we conclude that all points with multiplicity at least $2$ are contained in $E$ and 
  $\lambda_2+\lambda_3=3$. Let $R_1,R_2,R_3$ be pairwise different points such that $\cM|_E-\chi_L-2\cdot\chi_{P_3}-2\cdot\chi_Q=\sum_{i=1}^3\chi_{R_i}$. For any two points $Z_1,Z_2$ 
  of multiplicity $1$ outside of $E$ there exists a third point $Z_3$ (depending on $Z_1$ and $Z_2$), so that $\cM|_H=\cM|_E+\sum_{i=1}^3 \chi_{Z_i}$ for the hyperplane 
  $H=\left\langle E,Z_1,Z_2\right\rangle$ and $\cM':=\sum_{i=1}^3 \chi_{R_i}\,+\,\sum_{i=1}^3 \chi_{Z_i}$ is $2$-divisible consisting of six different points of multiplicity $1$.   
  Now we apply Proposition~\ref{prop_2_div_card_6}. If $\cM'$ is the sum of the characteristic functions of two disjoint lines, then one of the two lines has to be contained in $E$, 
  i.e., the $R_i$ form a line. In that case, also the $Z_i$ have to form a line for each choice of a pair $\left\{Z_1,Z_2\right\}$, so that the seven points of multiplicity $1$ outside of 
  $E$ form a disjoint plane $E'$. However, then we can apply Proposition~\ref{prop_4_div_card_10} to $\cM-\chi_{E'}=\cM|_E$ to obtain a contradiction. Thus, $\cM'$ is a projective base of size 
  $6$ in all cases. This is impossible as it can be seen using coordinate representations: W.l.o.g.\ we assume $R_i=e_i$ for $1\le i\le 3$, where $e_j$ denotes the $j$th unit vector. Since $\cM$ 
  is spanning, we assume w.l.o.g.\ that also $e_i$ are points of multiplicity $1$ for $4\le i\le 6$.  Choosing $4\le i<j\le 6$ the points $e_1,e_2,e_3,e_i,e_j$ are 
  completed by $e_1+e_2+e_3+e_i+e_j$ to a projective base of size $6$. However, $e_1,e_2,e_3,e_1+e_2+e_3+e_i+e_j,e_1+e_2+e_3+e_i+e_h$ are completed by $e_j+e_h$ to a 
  projective base of size $6$ for each $\{i,j,h\}=\{4,5,6\}$, which gives more than seven points of multiplicity $1$ outside of $E$ -- contradiction.

  
  \medskip
  
  It remains to consider the case $\lambda_3=\lambda_2=1$ and $\lambda_1=12$. 
  The projection $\cM_{P_2}$ of $\cM$ through $P_2$ is $4$-divisible with cardinality $15$ and a unique point $P'$ of cardinality $4$ (arising from $P_1$ and $P_3$). However, 
  Proposition~\ref{prop_4_div_card_11} yields a contradiction for $\cM_{P_2}-4\cdot\chi_{P'}$.    
  
  \medskip

  For the classification of the case $k=5$, $\lambda_1=9$, $\lambda_2=1$, $\lambda_3=2$, $a_{13}=3$, $a_9=19$, and $a_5=9$, we consider a projection $\cM_{P_2}$ of 
  $\cM$ through the unique point $P_2$ of multiplicity $2$. Note that the line $L$ is mapped to a point $\overline{Q}$ with multiplicity $\cM_{P_2}(\overline{Q})=
  \cM\!\left(P_3\right)+\cM\!\left(P_1\right)=4$, so that $\cM_{P_2}-4\cdot\chi_{\overline{Q}}$ is $4$-divisible with cardinality $11$. From Proposition~\ref{prop_4_div_card_11} 
  we conclude the existence of a point $\overline{Q}'$ and a plane $\overline{E}'$ such that $\cM_{P_2}=4\cdot\chi_{\overline{Q}}+ 4\cdot\chi_{\overline{Q}'}+\chi_{\overline{E}'}$. 
  The preimage of $\overline{Q}'$ has to be a line $L'$ consisting of $P_2$, the second point of multiplicity $3$, that we denote by $P_3'$, and a point $P_1'$ of multiplicity $1$. 
  Since $P_2$ and $P_3'$ are contained in $E$, also $P_1'$ is contained in $E$. Let $\tilde{L}:=\left\langle P_1,P_1'\right\rangle$, $N$ be the third point in $\tilde{L}$, and 
  $A$ be the seventh point in $E$, i.e., the set of points in $E$ is given by $\left\{P_1,P_2,P_3,P_1',P_3',A,N\right\}$. Since $\cM(E)=11$ we have $\cM(A)+\cM(N)=1$. Each of the 
  three hyperplanes $H_1,H_2,H_3$ containing $E$ has multiplicity $13$ and consists of two points of multiplicity $1$ outside of $E$. By $L_i$ we denote the line 
  spanned by those two points, where $1\le i\le 3$. Since $k=5$, the line $L_i$ meets $E$ in a point $Z_i$. We will now determine $Z_i$ and show $Z_1=Z_2=Z_3$. Let $Z_i\le \hat{L}\le E$ be an
  arbitrary line and $\hat{E}=\left\langle \hat{L},L_i\right\rangle$ be a plane depending on the choice of $\hat{L}$. Note that $\cM(\hat{E})=\cM(\hat{L})+2$ and 
  $\cM(\hat{E})\equiv \#\cM\equiv 1\pmod 2$ due to Lemma~\ref{lemma_divisibility_hyperplane}. Thus $Z_i$ cannot be contained in $L$ or $L'$, which implies $Z_i\in\{A,N\}$. Noting 
  that $N\le \left\langle P_3,P_3'\right\rangle$ we then conclude $Z_i=A$ for all $1\le i\le 3$, which then implies $\cM(A)=1$ and $\cM(N)=0$. For a 
  parameterization of $\cM$ we denote the $i$th vector by $e_i$ and choose w.l.o.g.\ $A=e_3$, $P_2=e_2$, $P_3=e_1$, so that $P_3'=e_1+e_2+e_3$, $P_1=e_1+e_3$, $P_1'=e_1+e_2$, 
  and $N=e_2+e_3$. W.l.o.g.\ we choose $L_1=\left\langle e_3,e_4\right\rangle$ and $L_2=\left\langle e_3,e_5\right\rangle$. From $4$-divisibility we then conclude $L_3=\left\langle
  e_3,e_2+e_4+e_5\right\rangle$, which can be seen e.g.\ by looking at the first $15$ columns of the stated matrix and using the fact that the number of ones in each row 
  has to be divisible by $4$.         
\end{proof}

\begin{lemma}
  \label{lemma_4_div_card_17_special_3}
  Let $\cM$ be a spanning $4$-divisible multiset of points in $\PG(k-1,2)$ with cardinality $17$, $\lambda_2=0$, and $\gamma_1(\cM)=3$. Then, we have $\lambda_3\in\{1,2\}$ and 
  a line spanned by two points of multiplicity $3$ has multiplicity $6$. If $\lambda_3=2$, then $k\ge 6$.  
\end{lemma}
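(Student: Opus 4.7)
The plan is as follows. Under our hypotheses $\lambda_2 = 0$ and $\gamma_1(\cM) = 3$, all point multiplicities lie in $\{0,1,3\}$ and all hyperplane multiplicities in $\{5,9,13\}$. Lemma~\ref{lemma_4_div_card_17_aux} already guarantees that any three (resp.\ four) points of multiplicity at least $2$ span a plane (resp.\ a solid), so no three triple points are collinear and no four triples are coplanar.

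For the line-multiplicity claim, fix distinct triple points $P_1, P_2$ and let $L = \langle P_1, P_2 \rangle$. The third point of $L$ has multiplicity $0$ or $1$, so $\cM(L) \in \{6,7\}$. I would rule out $\cM(L) = 7$: if some hyperplane $H \supseteq L$ had $\cM(H) = 9$, then the multiset $\cM|_H - \chi_L - 2\chi_{P_1} - 2\chi_{P_2}$ would be $2$-divisible of cardinality $2$, hence a double point (by Corollary~\ref{cor_simplex_repetition_special}); this double point must lie outside $L$ (the residual vanishes on $L$), forcing an actual point of multiplicity $2$, contradicting $\lambda_2 = 0$. So every hyperplane through $L$ would have multiplicity $13$, and Lemma~\ref{lemma_lower_bound_space_mult} would then give $\cM(L) = 9 + 2^{5-k}$, never equal to $7$. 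Hence $\cM(L) = 6$.

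For $\lambda_3 \le 2$, I would assume $\lambda_3 \ge 3$ and pick three triples $P_1, P_2, P_3$ spanning a plane $E$. A parallel argument shows that no hyperplane $H \supseteq E$ can have $\cM(H) = 9$: the residual $\cM|_H - 2(\chi_{P_1} + \chi_{P_2} + \chi_{P_3})$ would be $2$-divisible of cardinality $3$, hence $\chi_{L'}$ for a line $L'$, but the residual value $1$ at each $P_i$ would force $\{P_1, P_2, P_3\} \subseteq L'$, contradicting non-collinearity. So every $H \supseteq E$ has multiplicity $13$, and Lemma~\ref{lemma_lower_bound_space_mult} gives $\cM(E) = 9 + 2^{6-k}$; integrality together with Lemma~\ref{lemma_4_div_card_17_special} (applicable since $\lambda_2 = 0$ and $\lambda_3 \ne 1$) forces $k = 6$ and $\cM(E) = 10$. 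I would then dispose of $\lambda_3 \in \{3,4,5\}$ in turn. For $\lambda_3 = 3$, the spectrum from Lemma~\ref{lemma_4_div_card_17_aux} is $(a_5, a_9, a_{13}) = (21, 31, 11)$, and I would double-count incidences of triples with hyperplanes, a total of $3 \cdot 31 = 93$. Multiplicity-$5$ hyperplanes contribute $0$ (they are projective bases by Proposition~\ref{prop_2_div_card_5} and $\lambda_2 = 0$); the seven hyperplanes through $E$ contribute $21$; every other hyperplane contains at most two of $\{P_1, P_2, P_3\}$ (else $E \subseteq H$ and $\cM(H) \ge 10$). This gives an upper bound $21 + 2\cdot 31 + 2\cdot 4 = 91 < 93$, a contradiction. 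For $\lambda_3 = 4$, the four triples span a solid $S$ with $\cM(S) \ge 12$, so all three hyperplanes through $S$ have multiplicity $13$, and Lemma~\ref{lemma_lower_bound_space_mult} forces $\cM(S) = 11$, a contradiction. For $\lambda_3 = 5$, whether the five triples span a solid or the ambient hyperplane, a directly containing subspace has multiplicity $\ge 15 > 13$, impossible.

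The concluding clause ``$\lambda_3 = 2 \Rightarrow k \ge 6$'' is immediate from Lemma~\ref{lemma_4_div_card_17_special}, since $\lambda_2 = 0$ and $\lambda_3 \ne 1$ force exactly that disjunct. The main obstacle is the tight double-count in the $\lambda_3 = 3$ subcase: the margin $91 < 93$ is only $2$, and it crucially depends on $\cM(E) = 10$ strictly exceeding $9$, which is precisely what prevents multiplicity-$9$ hyperplanes outside the seven through $E$ from containing all three triples; the $\lambda_3 \in \{4,5\}$ cases then fall out of the sharp solid-multiplicity formula in Lemma~\ref{lemma_lower_bound_space_mult}.
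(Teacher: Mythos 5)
Your proof of the claim that a line through two triple points has multiplicity $6$, and your reduction of the bound $\lambda_3\le 2$ to the case $\lambda_3\ge 3$ with $k=6$ and $\cM(E)=10$ for the plane $E$ spanned by three triple points, coincide with the paper's argument (including deducing the final clause ``$\lambda_3=2\Rightarrow k\ge 6$'' from Lemma~\ref{lemma_4_div_card_17_special}). From that point on you take a genuinely different route: the paper writes $\cM=\cM|_E+\sum_{i=1}^{3}\chi_{L_{H_i}}-2\chi_P$ using the three hyperplanes through the solid $\langle E,P\rangle$ for a point $P\notin E$ of positive multiplicity and obtains a contradiction by comparing this decomposition for two different choices of $P$, which disposes of all $\lambda_3\ge 3$ at once; you instead split into $\lambda_3\in\{3,4,5\}$. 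Your $\lambda_3\in\{4,5\}$ arguments are correct and clean.

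The $\lambda_3=3$ case, however, has a genuine gap. Your double count needs hyperplanes of multiplicity $5$ to contain no triple point, and you justify this by saying they are projective bases ``by Proposition~\ref{prop_2_div_card_5} and $\lambda_2=0$''. That inference is wrong: Proposition~\ref{prop_2_div_card_5} also allows $\cM|_H=\chi_{L'}+2\chi_P$ with $P\le L'$, which has no double point but does contain a point of multiplicity $3$. A multiplicity-$5$ hyperplane could therefore a priori contain one triple point, and since there are $a_5=21$ of them, your upper bound degrades from $91$ to $112\ge 93$ and the contradiction evaporates; with a margin of only $2$ this step carries the whole case. The conclusion is salvageable: if such an $H$ contained $P_1$, then $H\cap E$ would have to be the unique line of $E$ through $P_1$ of multiplicity at most $5$, namely $\langle P_1,Q\rangle$ with $Q=P_1+P_2+P_3$, so $\cM(H\cap E)=4$; on the other hand $\cM|_H=\chi_{L'}+2\chi_{P_1}$ is supported on a line $L'$ that cannot lie inside $E$ (no line of $E$ through $P_1$ has multiplicity pattern $(3,1,1)$), whence $\cM(H\cap E)=\cM(P_1)=3$ --- a contradiction. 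You need to supply this (or an equivalent) argument before the count closes.
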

\begin{proof}
  Assume that $L$ is a line consisting of two points of multiplicity $3$ and one point of multiplicity $1$. Noting that $\cM|_L$ is $2$-divisible and $\lambda_2=0$ we 
  conclude $\cM(H)=13$ for each hyperplane $H\ge L$. Since $\cM(L)=7$, we have $k\ge 4$ and can use Lemma~\ref{lemma_lower_bound_space_mult} to conclude 
  $\cM(L)>9$ -- contradiction.
  
  Due to Lemma~\ref{lemma_4_div_card_17_special} it suffices to consider the case $\lambda_3\ge 3$. W.l.o.g. we assume that the points with the coordinates 
  $e_1$, $e_2$, and $e_3$ have multiplicity $3$, so that the points in $\left\langle e_1+e_2,e_1+e_3\right\rangle$ have multiplicity zero. Set $E:=\left\langle e_1,e_2,e_3\right\rangle$ 
  and $Q:=\left\langle e_1+e_2+e_3\right\rangle$. Since no $4$-divisible multiset of points of cardinality $9$ exists, we have $\cM(Q)\in \{0,1\}$ so that $\cM(E)\in \{9,10\}$. Note
  that $\cM|_E$ is not $2$-divisible if $\cM(E)=9$, so that all  hyperplanes $H\ge E$ have multiplicity $13$ and Lemma~\ref{lemma_lower_bound_space_mult} yields 
  $\cM(E)=9+2^{6-k}$, so that $k=6$ and $\cM(E)=10$, i.e., $\cM(Q)=1$. So, $\cM|_E$ is $2$-divisible and each hyperplane $H\ge E$ has multiplicity $13$ and is 
  given by $\cM|_H-\cM|_E=\chi_{L_H}$ for some line $L_H$. For an arbitrary point $P \not\in E$ with
  non-zero multiplicity consider the solid $S=\langle E,P\rangle$ and the three hyperplanes $H_1,H_2,H_3$ containing $S$. Since $\cM(H_1)=\cM(H_2)=\cM(H_3)=13$, we have $\cM(S)=11$
  and $\cM=\cM|_E+ \sum_{i=1}^3 \chi_{L_{H_i}} -2\chi_P$. Using the same argument for a
  different point $P\neq Q \not\in E$ with non-zero multiplicity yields a contradiction.
\end{proof}

\section{The minimum possible point multiplicity for triply-even multisets of points}
\label{sec_8_div}

Let $\cM$ be an $8$-divisible multiset of points in $\PG(v-1,2)$. From Corollary~\ref{cor_simplex_repetition_special} we know $\gamma_1(\cM)=8$ if $\#\cM=8$ and $\gamma_1(\cM)=4$ if 
$\#\cM=12$.

\begin{lemma}
  \label{lemma_8_div_card_22}
  Let $\cM$ be an $8$-divisible multiset of points in $\PG(v-1,2)$ with cardinality $22$. Then, we have $\gamma_1(\cM)\ge 8$.
\end{lemma}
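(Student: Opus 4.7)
The plan is to assume $\gamma_1(\cM) \le 7$ and derive a contradiction. After restricting $\cM$ to its span I may assume $\cM$ spans $\PG(k-1, 2)$. Since $\cM$ is $8$-divisible with $\#\cM = 22$, every hyperplane multiplicity satisfies $\cM(H) \equiv 6 \pmod 8$, and spanning rules out $\cM(H) = 22$, so $\cM(H) \in \{6, 14\}$. For a hyperplane with $\cM(H) = 6$, Lemma~\ref{lemma_divisibility_hyperplane} makes $\cM|_H$ a $4$-divisible multiset of cardinality $6$, which by Corollary~\ref{cor_simplex_repetition_special} is forced to be $2\chi_L$ for a line $L \le H$; in particular every point of such an $H$ has $\cM$-multiplicity at most $2$.

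Next I would classify point multiplicities. Any point $P$ with $\cM(P) \ge 3$ lies on no hyperplane of multiplicity $6$, so all hyperplanes through $P$ have multiplicity $14$; Lemma~\ref{lemma_lower_bound_space_mult} applied with $l = 1$, $s = 14$, $n = 22$, $q = 2$ then gives the exact value
$$\cM(P) = 14 - \frac{22-14}{1} + \frac{22-14}{2^{k-2}} = 6 + 2^{5-k}.$$
Combined with $\cM(P) \le 7$ and integrality, this forces $k = 5$ and $\cM(P) = 7$. Hence every point multiplicity lies in $\{0, 2, 7\}$, and the value $7$ can occur only when $k = 5$.

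In the case that no point has multiplicity $7$, I would write $\cM = 2\cM'$ where $\cM'$ is a $4$-divisible set (so $\gamma_1(\cM') = 1$) of cardinality $11$; but by Proposition~\ref{prop_Gamma_1} we have $\Gamma_2(4, 11) \ne 1$, giving the desired contradiction. In the remaining case I would pick a point $P$ with $\cM(P) = 7$ and project through it. By Lemma~\ref{lemma_projection}, $\cM_P$ is an $8$-divisible multiset of cardinality $22 - 7 = 15$ spanning $\PG(3, 2)$, and each of its hyperplanes has multiplicity $\cM(H) - \cM(P) = 14 - 7 = 7$. By Lemma~\ref{lemma_repeated_simplex} this constant-hyperplane-multiplicity condition forces $\cM_P$ to be the characteristic function of $\PG(3, 2)$. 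Translating back, every line $L$ through $P$ satisfies $\cM(L) = 7 + 1 = 8$, so the two other points on $L$ have $\cM$-multiplicities summing to $1$. But pairwise sums of elements from $\{0, 2, 7\}$ lie in $\{0, 2, 4, 7, 9, 14\}$, which does not contain $1$, giving the contradiction.

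The main obstacle is handling the multiplicity-$7$ case, since the easy even-multiplicity reduction to a set of $11$ points is then unavailable. The projection trick exploits the strong regularity of a mult-$7$ point (its being on no $\cM$-hyperplane of multiplicity $6$) to collapse $\cM_P$ into a uniquely determined repeated simplex, whereupon a one-line arithmetic check in $\{0, 2, 7\}$ completes the argument.
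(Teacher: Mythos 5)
Your opening moves coincide with the paper's: hyperplane multiplicities lie in $\{6,14\}$, a hyperplane of multiplicity $6$ restricts to $2\chi_L$ by Lemma~\ref{lemma_divisibility_hyperplane} and Corollary~\ref{cor_simplex_repetition_special}, and a point of multiplicity greater than $2$ sees only multiplicity-$14$ hyperplanes, so Lemma~\ref{lemma_lower_bound_space_mult} forces $\cM(P)=6+2^{5-k}=7$ and $k=5$. From there you diverge in both branches. For the multiplicity-$7$ point the paper restricts to a hyperplane $H\ni P$ of multiplicity $14$ and observes that $\cM|_H-4\chi_P$ is a $4$-divisible multiset of cardinality $10$ containing a point of multiplicity $3$, contradicting Corollary~\ref{cor_4_div_card_10}; you instead project through $P$, apply Lemma~\ref{lemma_repeated_simplex} to the quotient of constant hyperplane multiplicity $7$, and read off the contradiction on lines through $P$. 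For the low-multiplicity case the paper computes $\lambda_2=19+2^{8-k}>11$ from the standard equations; you halve $\cM$ and invoke $\Gamma_2(4,11)\neq 1$ (Proposition~\ref{prop_Gamma_1}, or equivalently Proposition~\ref{prop_4_div_card_11}). Both of your substitutions are valid and arguably more conceptual, at the cost of leaning on the projection lemma and on the classification of projective doubly-even lengths rather than on bare counting.

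There is one step you assert but do not justify, and your argument genuinely needs it: the claim that every point multiplicity lies in $\{0,2,7\}$. Your reasoning rules out multiplicities $3,4,5,6$ and anything above $7$, but not multiplicity $1$. This matters twice: in the even case you write $\cM=2\cM'$, which requires $\lambda_1=0$, and in the multiplicity-$7$ case your final arithmetic check fails if $0+1=1$ is an admissible sum on a line through $P$. (The paper's corresponding branch avoids this issue entirely, since $\binom{1}{2}=0$ makes $\lambda_1$ drop out of the standard equation it uses.) Fortunately the fix is the same dichotomy you already set up: a point of multiplicity exactly $1$ cannot lie on a hyperplane of multiplicity $6$, since such a hyperplane restricts to $2\chi_L$ and hence carries only points of multiplicity $0$ or $2$; therefore all hyperplanes through it have multiplicity $14$, and Lemma~\ref{lemma_lower_bound_space_mult} gives its multiplicity as $6+2^{5-k}\ge 7$, a contradiction. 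With that sentence inserted, your proof is complete.
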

\begin{proof}
  The possible hyperplane multiplicities are given by $14$ and $6$. If $H$ is a hyperplane with $\cM(H) = 6$, then 
  Corollary~\ref{cor_simplex_repetition_special} yields $\cM|_H=2\cdot\chi_L$  for some line $L$. If $P$ is a 
  point with multiplicity $\cM(P)> 2$, then all hyperplanes containing $P$ have multiplicity $14$ and 
  Lemma~\ref{lemma_lower_bound_space_mult} yields $\cM(P)=6+2^{5-k}$, so that it suffices to consider the case 
  $\cM(P)=7$. Since $\cM|_H-4\cdot \chi_P$ is $4$-divisible with cardinality $10$ and a point of multiplicity $3$, 
  Corollary~\ref{cor_4_div_card_10} gives a contradiction.
  
  It remains to consider the case $\gamma_1(\cM) = 1$. Using the standard equations we compute $\lambda_2=19+2^{8-k}$ for the 
  dimension $k$ of the span of $\cM$, which clearly contradicts $\#\cM=22$. 
\end{proof}

\begin{lemma}
  \label{lemma_8_div_card_23}
  Let $\cM$ be an $8$-divisible multiset of points in $\PG(v-1,2)$ with cardinality $23$. Then, we have $\gamma_1(\cM)\ge 8$.
\end{lemma}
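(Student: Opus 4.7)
The plan is to mimic the strategy used for Lemma~\ref{lemma_8_div_card_22}: assume for contradiction that $\gamma_1(\cM)\le 7$, and assume without loss of generality that $\cM$ is spanning in $\PG(k-1,2)$. The first key observation is that the only possible hyperplane multiplicities are $7$ and $15$. Indeed, $8$-divisibility together with $\#\cM=23$ forces $\cM(H)\equiv 7\pmod 8$, and $\cM(H)<23$ since $\cM$ is spanning. Moreover, by Lemma~\ref{lemma_divisibility_hyperplane} any hyperplane $H$ with $\cM(H)=7$ restricts to a $4$-divisible multiset of cardinality $7$, which by Corollary~\ref{cor_simplex_repetition_special} equals $\chi_E$ for some plane $E\le H$; in particular every point in $H$ has multiplicity at most $1$.

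Next I would invoke Proposition~\ref{prop_Gamma_1}: since $23$ is not in the listed set of cardinalities with $\Gamma_2(8,n)=1$, we have $\gamma_1(\cM)\ge 2$, so there is a point $P$ with $\cM(P)\ge 2$. By the observation above, every hyperplane containing $P$ must then have multiplicity exactly $15$. Applying Lemma~\ref{lemma_lower_bound_space_mult} with $l=1$, $s=15$ and $n=23$ gives $\cM(P)=15-8+8/2^{k-2}=7+2^{5-k}$. For $k\le 5$ this forces $\cM(P)\ge 8$, contradicting $\gamma_1(\cM)\le 7$; for $k\ge 6$ the value $7+2^{5-k}$ is not an integer, a contradiction as well. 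The hypothesis $k\ge 3$ needed for the lemma is immediate, because the cases $k\in\{1,2\}$ already violate $\gamma_1(\cM)\le 7$: for $k=1$ the unique point has multiplicity $23$, and for $k=2$ three point-multiplicities summing to $23$ cannot all be at most $7$.

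The main obstacle is mild, since everything hinges on the clean observation that a multiplicity-$7$ hyperplane contains no point of multiplicity at least two. What one has to be slightly careful about is the arithmetic case $k\ge 6$, which relies on $\cM(P)$ being an integer; without Proposition~\ref{prop_Gamma_1} one would additionally have to dispose of the projective case $\gamma_1(\cM)=1$ by hand via the standard equations $(\ref{se1})$, $(\ref{se2})$ and $(\ref{se5})$, but this extra work is unnecessary given the results already established in the paper.
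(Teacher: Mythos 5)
Your proof is correct and follows essentially the same route as the paper: the hyperplane multiplicities are $7$ and $15$, a hyperplane of multiplicity $7$ restricts to the characteristic function of a plane and hence contains no point of multiplicity at least $2$, so such a point lies only in hyperplanes of multiplicity $15$ and Lemma~\ref{lemma_lower_bound_space_mult} forces its multiplicity above $7$. The only cosmetic difference is that you dispose of the residual projective case via Proposition~\ref{prop_Gamma_1} and spell out the small-$k$ and integrality details that the paper's terser argument leaves implicit.
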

\begin{proof}
  The possible hyperplane multiplicities are given by $15$ and $7$. If $H$ is a hyperplane with $\cM(H) = 7$, then 
  Corollary~\ref{cor_simplex_repetition_special} yields $\cM|_H=\chi_E$  for some plane $E$. If $P$ is a 
  point with multiplicity $\cM(P)> 1$, then all hyperplanes containing $P$ have multiplicity $15$ and 
  Lemma~\ref{lemma_lower_bound_space_mult} yields $\cM(P)>7$. However, not all hyperplanes can have multiplicity $15$.  
\end{proof}

\begin{lemma} 
  \label{lemma_8_div_card_37_aux_1}
  Let $\cM$ be an $8$-divisible multiset of points in $\PG(v-1,2)$ with cardinality $37$ and $\gamma_1(\cM)\le 7$. Then, we have 
  \begin{eqnarray}
    a_{13}&=&5\cdot 2^{k-4}+2+a_{29},\nonumber\\
    a_{21}&=& 11\cdot 2^{k-4}-3-2a_{29},\text{ and}\nonumber\\ 
    \sum_{i=2}^7 {i\choose 2} \lambda_i &=& 9+2^{8-k}\cdot \left(3+a_{29}\right) \label{eq_lambda_sum_8_div_card_37} 
  \end{eqnarray}
  for the spectrum of $\cM$, where $k$ is the dimension of the span of $\cM$. There do not exist a solid $S'$, a plane $E'$, or a line $L'$ 
  such that $\cM\ge \chi_{S'}$, $\cM\ge 2\cdot \chi_{E'}$, or $\cM\ge 4\cdot \chi_{L'}$, respectively. 
  Moreover, we have $\gamma_1(\cM)\le 3$, each point of multiplicity at most $3$ is contained in a hyperplane with multiplicity $13$, and $k\ge 6$.  
\end{lemma}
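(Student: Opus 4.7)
The plan is to first derive the spectrum, then exclude the three dominated subconfigurations via subtraction, establish $\gamma_1(\cM)\le 3$ and the $13$-hyperplane containment by a uniform double-counting identity, and finally rule out $k\le 5$ case by case.

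Since $\cM$ is spanning and $8$-divisible of cardinality $37$, every hyperplane $H$ satisfies $\cM(H)\equiv 5\pmod 8$; by Lemma~\ref{lemma_divisibility_hyperplane} $\cM|_H$ is $4$-divisible, and cardinality $5$ is infeasible by Proposition~\ref{prop_Gamma_infty}, so $\cM(H)\in\{13,21,29\}$. Eliminating $a_{13}$ and $a_{21}$ in terms of $a_{29}$ from (\ref{se1}) and (\ref{se2}), then substituting into (\ref{se5}), yields the three stated spectrum equations. For the three nondomination claims, observe that $\chi_{S'}$, $2\chi_{E'}$, and $4\chi_{L'}$ are themselves $8$-divisible of cardinalities $15$, $14$, $12$; if $\cM$ dominated one, subtracting would produce an $8$-divisible multiset with $\gamma_1\le 7$ of cardinality $22$, $23$, or $25$, contradicting Lemma~\ref{lemma_8_div_card_22}, Lemma~\ref{lemma_8_div_card_23}, and Proposition~\ref{prop_Gamma_infty} respectively.

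To bound $\gamma_1(\cM)$, suppose $\cM(P)\ge 4$. The projection $\cM_P$ (Lemma~\ref{lemma_projection}) is $8$-divisible of cardinality $37-\cM(P)$, so $\cM(P)=4$ is excluded by $\Gamma_2(8,33)=\infty$. For $\cM(P)\ge 5$, Proposition~\ref{prop_div_4_card_13} forces every $4$-divisible multiset of cardinality $13$ to have $\gamma_1\le 3$, so no $13$-hyperplane contains $P$, and then the double-counting identity $\sum_{H\ni P}\cM(H)=\cM(P)\cdot 2^{k-2}+37(2^{k-2}-1)\ge 21(2^{k-1}-1)$ forces $\cM(P)\ge 5+2^{6-k}$. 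This rules out $\cM(P)=5$ outright and pushes $\cM(P)=6$ and $\cM(P)=7$ into the regimes $k\ge 6$ and $k\ge 5$ respectively. These last cases I would eliminate by a further subtraction of $4\chi_P$ inside a $21$-hyperplane through $P$, reaching the tightly controlled cardinality-$17$ classification of Lemmas~\ref{lemma_4_div_card_17_aux}--\ref{lemma_4_div_card_17_special_3}; this iterative reduction is the main obstacle. The same double counting, applied now to a point with $\cM(P)\le 3$, shows that some hyperplane through $P$ must have multiplicity $13$, since otherwise $\cM(P)\ge 5+2^{6-k}>3$.

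For $k\ge 6$, the third spectrum identity under $\gamma_1\le 3$ reads $\lambda_2+3\lambda_3=9+2^{8-k}(3+a_{29})$, while $\lambda_1+2\lambda_2+3\lambda_3=37$ and $\lambda_i\ge 0$ force $\lambda_2+3\lambda_3\le 36$. This contradiction rules out $k\le 4$ directly (and $k=3$ already fails integrality of $a_{13}$). For $k=5$, the numerics force $a_{29}=0$, $a_{13}=12$, $a_{21}=19$, and $(\lambda_1,\lambda_2,\lambda_3)\in\{(4,0,11),(1,3,10)\}$. Every $13$-hyperplane in $\PG(3,2)$ must be of the form $\chi_E+2\chi_L$ (the alternative in Proposition~\ref{prop_div_4_card_13} needs dimension $\ge 5$), contributing internally either $4$ multiplicity-$1$ points if $L\subseteq E$, or $6$ if $L\cap E$ is a single point. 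The mixed case is incompatible with $\lambda_1(\cM)\le 4$, so every such hyperplane is of the first kind and contains all $4$ mult-$1$ points of $\cM$ (in Case~$(1,3,10)$ no $13$-hyperplane of either kind can exist, immediately contradicting $a_{13}=12$). But then in Case~$(4,0,11)$ those $4$ points, which span at least a plane, lie in all twelve $13$-hyperplanes, whereas a plane of $\PG(4,2)$ is contained in only $3$ hyperplanes, a contradiction. Hence $k\ge 6$.
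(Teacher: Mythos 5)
Most of your proposal is sound and runs parallel to the paper's argument: the derivation of the spectrum equations, the exclusion of $S'$, $E'$, $L'$ by subtracting the $8$-divisible multisets $\chi_{S'}$, $2\chi_{E'}$, $4\chi_{L'}$ and invoking Lemmas~\ref{lemma_8_div_card_22} and~\ref{lemma_8_div_card_23} and Proposition~\ref{prop_Gamma_infty}, the bound $\cM(P)\ge 5+2^{6-k}$ for a point lying only in hyperplanes of multiplicity at least $21$ (which gives both $\lambda_4=\lambda_5=0$ and the $13$-hyperplane containment for points of multiplicity at most $3$), and the elimination of $k=5$ are all correct. Your $k=5$ treatment is in fact slightly more careful than the paper's, since you also dispose of the profile $(\lambda_1,\lambda_2,\lambda_3)=(1,3,10)$ explicitly before running the ``four unit points span a plane contained in twelve hyperplanes'' contradiction, and your projection-to-cardinality-$33$ trick for $\cM(P)=4$ is a legitimate shortcut the paper does not use.

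The genuine gap is the elimination of points of multiplicity $6$ and $7$, which you defer with ``this iterative reduction is the main obstacle.'' This is not a loose end; it is the bulk of the paper's proof. A single subtraction of $4\chi_P$ inside a $21$-hyperplane is used by the paper only once, in one sub-case ($k=6$, $\cM(P)=7$), and even there it must be combined with further counting of $a_{29}$ and $\lambda_3$ to reach a contradiction. Before that point the paper has to: show that no line avoiding $P$ can have all three points of multiplicity at least $2$ (via a separate double count of hyperplanes through a line $\tilde L\le E$ yielding $\cM(\tilde L)\ge 13+2^{6-k}$ versus $\cM(\tilde L)\le 13$); deduce from Proposition~\ref{prop_div_4_card_13} that every $13$-hyperplane is then of the projective-base type with one triple point and ten unit points, hence $\lambda_2=0$ and $k\ge 6$; kill $k\ge 7$ by a codimension-$2$ argument feeding into Proposition~\ref{prop_4_div_card_12}; and kill $k=6$ separately for multiplicity $6$ (where $P$ lies in $2^{k-5}-2=0$ hyperplanes of multiplicity $29$, forcing $\cM(L)=7<9$ for a line joining $P$ to a triple point) and for multiplicity $7$. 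Your outline gives no route through any of this, and the ordering you propose (establish $\gamma_1\le 3$ first, then analyze $k$) is itself problematic because the multiplicity-$6$/$7$ elimination needs the dimension information interleaved with it. Until that portion is supplied, the claims $\gamma_1(\cM)\le 3$ and $k\ge 6$ are unproved.
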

\begin{proof}
  Since there is no $4$-divisible multiset of cardinality $5$, the hyperplane multiplicities are given by $13$, $21$, and $29$. 
  With this, the stated equations follow from the standard equations. If a subspace $S'$, $E'$, or 
  $L'$, as specified in the statement would exist, then $\cM-\chi_{S'}$, $\cM-2\cdot \chi_{E'}$, or $\cM-4\cdot \chi_{L'}$ would be 
  an $8$-divisible multiset of cardinality $22$, $23$, or $25$, which is impossible due to Lemma~\ref{lemma_8_div_card_22}, 
  Lemma~\ref{lemma_8_div_card_23}, and Proposition~\ref{prop_Gamma_infty}.
  
  Assume that $P$ is a point with $\cM(P)\ge 4$. Proposition~\ref{prop_div_4_card_13} implies $\cM(H)\in \{21,29\}$ for each hyperplane 
  $H$ containing $P$, so that Lemma~\ref{lemma_lower_bound_space_mult} yields $\cM(P)>5$, i.e., $\lambda_4=\lambda_5=0$ and each point with multiplicity 
  at most $3$ indeed has to be contained in a hyperplane of multiplicity $13$. Double counting the points in the hyperplanes containing $P$ yields 
  that $P$ is contained in $2^{k-5}-2$ hyperplanes of multiplicity $29$ if $\cM(P)=6$ and $2^{k-4}-2$ hyperplanes of multiplicity $29$ if $\cM(P)=7$. 
  Thus, we have $k\ge 6$ if $\lambda_6\ge 1$ and $k\ge 5$ if $\lambda_7\ge 1$.  
  
  First we will show $\lambda_6+\lambda_7\le 1$ and $k\ge 5$. Assume that $L$ is a line spanned by two points with multiplicity at least $6$ and denote 
  the third point of $L$ by $P$. From Proposition~\ref{prop_div_4_card_13} we conclude that each hyperplane $H$ with multiplicity $13$ meets $L$ exactly in $P$, 
  so that $a_{13}\le [k-1]_2-[k-2]_2=2^{k-2}<5\cdot 2^{k-4}+2+a_{29}$ -- contradiction. Thus, we have $\lambda_6+\lambda_7\in\{0,1\}$. From $a_{13}\in \N$ we 
  conclude $k\ge 4$. If $k=4$, then Equation~(\ref{eq_lambda_sum_8_div_card_37}) implies $\lambda_6+\lambda_7\ge 1$, which is possible for $k\ge 5$ only. 
  
  Assume that $P$ is a point of multiplicity at least $6$. Further assume the existence of a line $L$ not containing $P$ but whose three points all 
  have multiplicity at least $2$. 
  By $E$ we denote the plane spanned by $L$ and $P$, so that $\cM(H)\ge 21$ 
  for every hyperplane $H$ containing $E$, see Proposition~\ref{prop_div_4_card_13}. If $\cM(H)=21$, then $\cM|_H-2\cdot\chi_L-4\cdot\chi_P$ is $4$-divisible with cardinality $11$, so that 
  Proposition~\ref{prop_4_div_card_11} implies the existence of a point $Q$ with multiplicity $4$ in $\cM|_H-2\cdot\chi_L-4\cdot\chi_P$. Since $\gamma_1(\cM)<8$ and 
  $\lambda_4+\lambda_5+\lambda_6+\lambda_7\le 1$ this is 
  impossible. Thus, all   $2^{k-3}-1$ hyperplanes containing $E$ have multiplicity $29$. Now let $\tilde{L}\le E$ be a line with $P\le \tilde{L}$, so that we have $\cM(H)\in\{21,29\}$ for every 
  hyperplane containing $\tilde{L}$. Denoting the number of those hyperplanes with multiplicity $29$ by $x$ and double counting points gives
  $$
    \left(2^{k-3}-1\right)\cdot(37-\cM(\tilde{L}))=\left(2^{k-2}-1\right)\cdot (21-\cM(\tilde{L}))+8x,
  $$  
  so that $\cM(\tilde{L})=5+2^{7-k}+2^{6-k}x$. Using $x\ge 2^{k-3}-1$ we conclude $\cM(\tilde{L})\ge 13+2^{6-k}$. However, $\cM(\tilde{L})\le 7+2\cdot 3=13$ -- contradiction.
  
  So, if $P$ is a point with multiplicity at least $6$, then any line $L$ that does not contain $P$ contains a point of multiplicity at most $1$. With this, Proposition~\ref{prop_div_4_card_13} 
  yields that any hyperplane $H$ with multiplicity $13$ contains of a unique point of multiplicity $3$ and $10$ points of multiplicity $1$. Especially, we have $\cM(P')\neq 2$ 
  for every point $P'$ and $k\ge 6$. If $k\ge 7$, then let $H_1$ be a hyperplane with multiplicity $13$ and $K\le H_1$ a $(k-2)$-dimensional subspace with multiplicity $\cM(K)=13$. With this  
  let $H_2$ and $H_3$ be 
  the two other hyperplanes containing $K$. W.l.o.g.\ we assume $\cM(H_2)=21$ and $\cM(H_3)=29$. Clearly $P\not\le H_1$ and since $\cM|_{H_2}-\cM|_{K}-4\cdot\chi_P$ would be $4$-divisible 
  with cardinality $4$, $P$ is also not contained in $H_2$. However, $\cM|_{H_3}-\cM|_{K}-4\cdot\chi_P$ is $4$-divisible with cardinality $12$. By construction, except at most one point, the 
  point multiplicities are contained in $\{0,1,3\}$, which contradicts Proposition~\ref{prop_4_div_card_12}. It remains to consider the case $k=6$. If $P_6$ is a point of multiplicity $6$, then    
  it is contained in $2^{k-5}-2=0$ hyperplanes of multiplicity $29$, so that the line $L$ spanned by $P_6$ and a point of multiplicity $3$ is contained in hyperplanes of multiplicity $21$ 
  only. Lemma~\ref{lemma_lower_bound_space_mult} then yields the contradiction $\cM(L)=7<6+3$. So, let $P_7$ be a point of multiplicity $7$, so that $3\lambda_3 = 4 a_{29}$, which implies 
  $a_{29}\ge 3$ and $\lambda_3\ge 4$. Now let $L$ be a line spanned by two points of multiplicity $3$ such that $P_7$ is not contained in $L$ and $E$ be the plane spanned by 
  $L$ and $P_7$. If $H$ would be a hyperplane with multiplicity $21$, then we can apply Lemma~\ref{lemma_4_div_card_17_special} to $\cM|_H-4\cdot \chi_{P_7}$ -- contradiction.
  So, Lemma~\ref{lemma_lower_bound_space_mult} yields $\cM(E)=23$ and $\cM(S)=25$ for each solid containing $E$. Moreover, we have $a_{29}\ge 7$, so that $a_{29}\ge 9$ and $\lambda_3\ge 12$, 
  which is impossible.   
   
  Thus, we finally conclude $\gamma_1(\cM)\le 3$.
  
  \bigskip    
       
  If $k=5$, then the existence of a hyperplane of multiplicity $13$ implies $\lambda_1\ge 4$, see Proposition~\ref{prop_div_4_card_13}, 
  so that $\lambda_2+3\lambda_3\le 33$. With this, Equation~(\ref{eq_lambda_sum_8_div_card_37}) yields $\lambda_1=4$, $\lambda_2=0$, $\lambda_3=11$, and $a_{29}=0$. 
  However, the four points of multiplicity $1$ span a unique plane, which contradicts $a_{13}\ge 12 > 3$.     
    
\end{proof}

\begin{lemma}
  \label{lemma_8_div_card_37}
  Let $\cM$ be an $8$-divisible multiset of points in $\PG(v-1,2)$ with cardinality $37$. Then, we have $\gamma_1(\cM)\ge 8$. 
\end{lemma}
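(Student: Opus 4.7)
The plan is to suppose $\gamma_1(\cM)\le 7$ for contradiction and apply Lemma~\ref{lemma_8_div_card_37_aux_1}, which yields $\gamma_1(\cM)\le 3$, $k\ge 6$ for the dimension $k$ of the span of $\cM$, hyperplane multiplicities in $\{13,21,29\}$, $m_{13}(P)\ge 1$ for every point $P$ with $\cM(P)\ge 1$ (where $m_{13}(P)$ counts the mult-$13$ hyperplanes through $P$), and the non-existence of a solid $S'$, plane $E'$, or line $L'$ with $\cM\ge \chi_{S'}$, $\cM\ge 2\chi_{E'}$, $\cM\ge 4\chi_{L'}$. Combining $\lambda_1+2\lambda_2+3\lambda_3=37$ with the aux identity $\lambda_2+3\lambda_3=9+2^{8-k}(3+a_{29})$ and using $\lambda_i,a_{29}\ge 0$, one obtains a bounded list of admissible $(k,a_{29},\lambda_1,\lambda_2,\lambda_3)$; for example $k=6$ forces $a_{29}\in\{0,1,2,3\}$ since $a_{29}=4$ gives $3\lambda_3=37$.

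The next step is to refine using Proposition~\ref{prop_div_4_card_13}: each mult-$13$ hyperplane $H$ is of one of three types — type (1) $\cM|_H=\chi_E+2\chi_L$ with $L\le E$, carrying three mult-$3$ points on $L$ and four mult-$1$ points on $E\setminus L$; type (2) $\cM|_H=\chi_E+2\chi_L$ with $L\not\le E$, carrying one mult-$3$, two mult-$2$, six mult-$1$ points; type (3) the cone-over-$B_5$ configuration, carrying one mult-$3$ and ten mult-$1$ points. Writing $a_{13}^{(j)}$ for the three counts with $\sum_j a_{13}^{(j)}=a_{13}$, double counting yields
\begin{align*}
3a_{13}^{(1)}+a_{13}^{(2)}+a_{13}^{(3)} &= \sum_{\cM(P)=3}m_{13}(P)\ge \lambda_3,\\
2a_{13}^{(2)} &= \sum_{\cM(P)=2}m_{13}(P)\ge \lambda_2,\\
4a_{13}^{(1)}+6a_{13}^{(2)}+10a_{13}^{(3)} &= \sum_{\cM(P)=1}m_{13}(P)\ge \lambda_1.
\end{align*}
A crucial structural observation is that in a type-(1) or type-(3) hyperplane the nonzero support is confined to a plane, so $\cM(P)=0$ for every $P\in H$ not lying in the distinguished plane $E$.

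The final step is to rule out each admissible tuple. Projecting through a mult-$3$ point $Q$ gives an $8$-divisible multiset of cardinality $34$, so $\gamma_1(\cM_Q)\ge \Gamma_2(8,34)\ge 2$ by Proposition~\ref{prop_Gamma_1}, forcing a line $L\ni Q$ with $\cM(L)\ge 5$; an analogous projection through a mult-$2$ point gives $\gamma_1(\cM_Q)\ge \Gamma_2(8,35)\ge 4$ once the latter is available, producing a line $L\ni Q$ with $\cM(L)=6$. Chasing these forced lines across all mult-$3$ and mult-$2$ points and combining with the type-split of mult-$13$ hyperplanes and with the restriction of $\cM$ to a mult-$29$ hyperplane $H$ — where $\cM|_H$ is a $4$-divisible multiset of cardinality $29$ with $\gamma_1\le 3$, accessible via the partial classifications from Section~\ref{sec_even_doubly_even} — I would produce either a plane $E'$ with $\cM\ge 2\chi_{E'}$ or a solid $S'$ with $\cM\ge\chi_{S'}$, contradicting the aux lemma. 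The main obstacle will be the $k=6$ case with small $a_{29}$, where the naive double counts leave considerable freedom and the decisive contradiction must be extracted from the finer incidence geometry between the distinguished planes of the mult-$13$ hyperplanes and the supports of mult-$29$ hyperplanes.
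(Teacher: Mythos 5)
Your opening is sound and matches the paper: assume $\gamma_1(\cM)\le 7$ and invoke Lemma~\ref{lemma_8_div_card_37_aux_1} to get $\gamma_1(\cM)\le 3$ and $k\ge 6$. But from there you head into an elaborate case analysis that never closes, and you miss the one idea that finishes the proof immediately. The paper takes a counterexample of \emph{minimal} dimension $k$. Since $k\ge 6$, the ambient space has at least $2^6-1=63$ points while the support of $\cM$ has at most $37$, so there is a point $Q$ with $\cM(Q)=0$. Projecting through $Q$ (Lemma~\ref{lemma_projection}) yields an $8$-divisible multiset of cardinality $37-\cM(Q)=37$ whose span has dimension $k-1$ and whose maximum point multiplicity is $\cM(L)-\cM(Q)\le 3+3=6\le 7$ for the best line $L\ni Q$. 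This is again a counterexample, of strictly smaller dimension --- contradicting minimality. That is the entire proof; no spectrum analysis, no hyperplane typing, no projections through points of positive multiplicity are needed.

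Your proposal, by contrast, is a plan rather than a proof: you explicitly defer the decisive step (``the decisive contradiction must be extracted from the finer incidence geometry\dots''), and that deferred step is precisely the content that is missing. It is not clear that your route would terminate without substantial additional work, particularly in the $k=6$, small-$a_{29}$ regime you flag yourself. There is also a concrete factual error in your setup: you claim that in a type-(3) hyperplane (the cone over $B_5$ from Proposition~\ref{prop_div_4_card_13}) the nonzero support is confined to a plane. It is not --- that configuration consists of the apex $C$ and ten further points on the five lines $\langle P,C\rangle$ with $P\in B_5$, and it spans a $5$-dimensional subspace. Since you label this a ``crucial structural observation,'' any argument built on it for type-(3) hyperplanes would be unsound. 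Your type-(2) bookkeeping also silently assumes the line $L$ meets the plane $E$ in a point, omitting the disjoint subcase with three double points and seven single points.
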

\begin{proof}
  Assume that $\cM$ is an $8$-divisible multiset of points in $\PG(v-1,2)$ with cardinality $37$, $\gamma_1(\cM)\le 7$, and minimum possible dimension $k$ of its span.  
  Lemma~\ref{lemma_8_div_card_37_aux_1} yields $\gamma_1(\cM)\le 3$ and $k\ge 6$, so that there clearly exists a point $Q$ with multiplicity zero. Using Lemma~\ref{lemma_projection} 
  we conclude that the projection $\cM_Q$ through $Q$ is an $8$-divisible multiset of points with cardinality $37$, $\gamma_1\!\left(\cM_Q\right)\le 2\cdot \gamma_1(\cM)\le 6$, 
  and dimension $k-1$ of its span, which contradicts the minimality of $k$. 
\end{proof}

\begin{lemma}
  Let $\cM$ be an $8$-divisible multiset of points in $\PG(v-1,2)$ with cardinality $20$. Then, we have $\gamma_1(\cM)\ge 4$.
\end{lemma}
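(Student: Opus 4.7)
The plan is to argue by contradiction, assuming $\gamma_1(\cM) \le 3$. Without loss of generality I replace $\cM$ by its restriction to its span, so $\cM$ is spanning in $\PG(k-1,2)$ for some $k\ge 1$. The case $k=1$ is trivial since then $\cM = 20\cdot \chi_P$ gives $\gamma_1(\cM)=20\ge 4$, so I may assume $k\ge 2$.

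Since $\cM$ is $8$-divisible and $\#\cM = 20$, every hyperplane $H$ satisfies $\cM(H) \equiv 20 \equiv 4 \pmod 8$, and spanning forces $\cM(H) \le 19$, so $\cM(H) \in \{4, 12\}$. The first key observation is that the value $4$ cannot occur: by Lemma~\ref{lemma_divisibility_hyperplane}, if $\cM(H) = 4$ then $\cM|_H$ is a $4$-divisible multiset of cardinality $4$, and Corollary~\ref{cor_simplex_repetition_special} forces $\cM|_H = 4\cdot\chi_P$ for some point $P$, contradicting $\gamma_1(\cM)\le 3$.

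Hence every hyperplane has multiplicity exactly $12$. Applying Lemma~\ref{lemma_repeated_simplex} with $s = 12$ and $n = 20$, every point must have multiplicity $t = 20/[k]_2$. This requires $2^k - 1 \mid 20$, which fails for every $k \ge 2$ (the values $3,7,15,31,\dots$ do not divide $20$), giving the desired contradiction.

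The argument is essentially just a divisibility/congruence reduction, so I do not expect a real obstacle. The only thing to be careful about is the spanning reduction and the degenerate $k=1$ case, which must be handled separately because Lemma~\ref{lemma_repeated_simplex} is only informative for $k\ge 2$.
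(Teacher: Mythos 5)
Your proof is correct and follows essentially the same route as the paper: the hyperplane multiplicities are $4$ or $12$, the value $4$ forces a $4$-fold point via Corollary~\ref{cor_simplex_repetition_special}, and if only $12$ occurs then Lemma~\ref{lemma_repeated_simplex} gives a contradiction. Your write-up just makes explicit the spanning reduction, the $k=1$ case, and the divisibility condition $2^k-1\mid 20$ that the paper leaves implicit.
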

\begin{proof}
  The possible hyperplane multiplicities are given by $12$ and $4$. If the latter occurs,
  then there is a point of multiplicity $4$, see Corollary~\ref{cor_simplex_repetition_special}. 
  Otherwise we can apply Lemma~\ref{lemma_repeated_simplex} to obtain a contradiction.
\end{proof}

\begin{lemma}
  \label{lemma_8_div_card_24}
  Let $\cM$ be an $8$-divisible multiset of points in $\PG(v-1,2)$ with cardinality $24$. Then, we have $\gamma_1(\cM)\ge 4$.
\end{lemma}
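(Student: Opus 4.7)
The plan is to assume $\gamma_1(\cM)\le 3$ and derive a contradiction. If $\cM$ is spanning in $\PG(k-1,2)$, then $k\ge 4$ because otherwise $24=\#\cM\le 3[k]_2\le 21$. The hyperplane multiplicities lie in $\{0,8,16\}$, and the standard equations give $a_8+2a_{16}=6\cdot 2^{k-2}-3$ together with $64\,a_{16}=2^{k-2}(108+\lambda_2+3\lambda_3)-192$. The positivity condition $a_0=a_{16}-2^{k-1}+2\ge 0$ rearranges to
\[
  2^{k-2}(\lambda_2+3\lambda_3-20)\;\ge\;64.
\]
Since $\lambda_2+3\lambda_3\le\lambda_1+2\lambda_2+3\lambda_3=24$, this is infeasible for $k\le 5$ and forces $\lambda_2+3\lambda_3\ge 21$ for $k\ge 6$. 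Subtracting from $\lambda_1+2\lambda_2+3\lambda_3=24$ then gives $\lambda_1+\lambda_2\le 3$, so in particular $\lambda_1\le 3$ and $\lambda_2\le 3$.

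Next I would exclude all points of multiplicity $1$ or $2$. Let $P$ be a point with $\cM(P)\in\{1,2\}$ and suppose some hyperplane $H$ through $P$ has $\cM(H)=8$. By Lemma~\ref{lemma_divisibility_hyperplane}, $\cM|_H$ is $4$-divisible of cardinality $8$ with $\gamma_1(\cM|_H)\le 3$, so Proposition~\ref{prop_multiples_of_affine_spaces} (with $r=2$) leaves only two options: $\cM|_H=2\chi_{K\setminus E}$ (four points of multiplicity $2$) or $\cM|_H=\chi_{K\setminus E}$ (eight points of multiplicity $1$); the degenerate $8\chi_{P'}$ and the four-fold-point form $4\chi_{K\setminus E}$ are both ruled out by $\gamma_1\le 3$. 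Since $P$ occurs in the support of $\cM|_H$ with its global multiplicity, this forces $\lambda_2\ge 4$ if $\cM(P)=2$ and $\lambda_1\ge 8$ if $\cM(P)=1$, both of which contradict $\lambda_1,\lambda_2\le 3$. Hence no hyperplane through $P$ can have multiplicity $8$, and since $\cM(H)\ge \cM(P)\ge 1$ rules out $0$, every hyperplane through $P$ has multiplicity $16$. Applying Lemma~\ref{lemma_lower_bound_space_mult} with $l=1$, $s=16$, $n=24$ then yields $\cM(P)\ge 8+8/2^{k-2}>3$, a contradiction.

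We conclude $\lambda_1=\lambda_2=0$, and hence $3\lambda_3=24$ gives $\cM=3\chi_S$ for a set $S$ with $|S|=8$. To finish, every hyperplane satisfies $\cM(H)=3|S\cap H|\in\{0,8,16\}$, and because $\gcd(3,8)=1$, the $8$-divisibility forces $|S\cap H|\equiv 0\pmod 8$, i.e.\ $|S\cap H|\in\{0,8\}$. The value $8$ would put $S$ inside a single hyperplane, contradicting that $\cM$ spans $\PG(k-1,2)$; yet the value $0$ for every hyperplane is impossible because every point of $V$ lies in $[k-1]_2\ge 1$ hyperplanes, so any point of $S$ lies in a hyperplane meeting $S$ nontrivially. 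The main obstacle is the case analysis via Proposition~\ref{prop_multiples_of_affine_spaces}, which is what collapses $\cM$ to a pure triple of an $8$-set, after which the divisibility breakdown is automatic.
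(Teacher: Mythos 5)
Your proof is correct, but it organizes the case analysis in essentially the reverse order from the paper's. The paper first disposes of $\gamma_1(\cM)=1$ by citing the known classification of lengths of projective triply-even codes (Proposition~\ref{prop_Gamma_1}), then eliminates points of multiplicity $3$ geometrically (a hyperplane of multiplicity $8$ through such a point would contradict Proposition~\ref{prop_multiples_of_affine_spaces}, so all hyperplanes through it have multiplicity $16$ and Lemma~\ref{lemma_lower_bound_space_mult} forces $\cM(P)\ge 9$), and finally kills $\gamma_1(\cM)=2$ by the standard equations, which give $\lambda_2\ge 20+2^{8-k}>12$. You instead run the standard equations first for general $\gamma_1(\cM)\le 3$, extract $\lambda_2+3\lambda_3\ge 21$ and hence $\lambda_1+\lambda_2\le 3$ from $a_0\ge 0$, use the affine-space classification together with Lemma~\ref{lemma_lower_bound_space_mult} to eliminate points of multiplicity $1$ and $2$ (rather than $3$), and finish off the residual configuration $\cM=3\cdot\chi_S$ with $\#S=8$ by the observation that $3\cdot\#(S\cap H)$ can never equal $8$ or $16$. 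All your computations check out: $a_8+2a_{16}=6\cdot 2^{k-2}-3$, $64a_{16}=2^{k-2}\left(108+\lambda_2+3\lambda_3\right)-192$, and $a_0\ge 0$ does rearrange to $2^{k-2}\left(\lambda_2+3\lambda_3-20\right)\ge 64$, which is infeasible for $k\le 5$ and forces $\lambda_2+3\lambda_3\ge 21$ otherwise. A small bonus of your route is that it is self-contained: the case $\gamma_1(\cM)=1$ has $\lambda_2+3\lambda_3=0$ and is swept up by the same counting inequality, so you never need to invoke Proposition~\ref{prop_Gamma_1}. The paper's version is a bit shorter because once points of multiplicity $3$ are gone the single inequality $\lambda_2>12$ ends the proof, whereas your route needs the extra terminal case $\cM=3\cdot\chi_S$; both are legitimate.
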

\begin{proof}
  W.l.o.g.\ we assume $2\le \gamma_1(\cM)\le 3$. The possible hyperplane multiplicities are given by $0$, $8$, and $16$. 
  Let $P$ be a point with multiplicity $\cM(P)=3$. Due to Proposition~\ref{prop_multiples_of_affine_spaces} we have $\cM(H)=16$ 
  for every hyperplane $H$ containing $P$, so that Lemma~\ref{lemma_lower_bound_space_mult} yields $\cM(P)\ge  9$ -- contradiction. 
  Thus, we have $\gamma_1(\cM)=2$ and denote the dimension of the span of $\cM$ by $k$. Using the standard equations we compute
  \begin{eqnarray*}
    a_0&=& a_{16}-2^{k - 1} + 2,\\
    a_8&=& -2a_{16} + 3\cdot 2^{k - 1} - 3,\text{ and}\\
    \lambda_2&=&-108 +(a_{16}+3)\cdot 2^{8-k},
  \end{eqnarray*}
  so that
  $$
    a_{16}\ge 2^{k-1}-2\quad\text{and}\quad \lambda_2\ge 20+2^{8-k}>20,
  $$
  which contradicts $\lambda_2\le 24/2=12$.
\end{proof}

\begin{lemma}
  \label{lemma_8_div_card_26}
  Let $\cM$ be an $8$-divisible multiset of points in $\PG(v-1,2)$ with cardinality $26$. Then, we have $\gamma_1(\cM)\ge 4$.
\end{lemma}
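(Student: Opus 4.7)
The plan is to mimic the argument of Lemma~\ref{lemma_8_div_card_24}. Since $26$ is not among the cardinalities listed in Proposition~\ref{prop_Gamma_1}, no $8$-divisible set of $26$ points exists, so we may assume for contradiction that $2\le \gamma_1(\cM)\le 3$. Because $\cM(H)\equiv \#\cM\equiv 2\pmod 8$ and $\cM|_H$ is $4$-divisible by Lemma~\ref{lemma_divisibility_hyperplane}, every hyperplane multiplicity lies in $\{2,10,18\}$. Let $k$ denote the dimension of the span of $\cM$; since $\gamma_1(\cM)\le 3$ and $\#\cM=26$, we have $k\ge 4$.

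Next I would rule out the existence of a point $P$ with $\cM(P)=3$. Such a $P$ cannot lie in a hyperplane $H$ with $\cM(H)=2$, since Corollary~\ref{cor_simplex_repetition_special} would force $\cM|_H=2\chi_{P'}$, and it cannot lie in a hyperplane with $\cM(H)=10$ either, because Corollary~\ref{cor_4_div_card_10} guarantees $\cM|_H$ has only even point multiplicities. Hence every hyperplane through $P$ has multiplicity $18$, and Lemma~\ref{lemma_lower_bound_space_mult} yields $\cM(P)>18-(26-18)=10$, contradicting $\cM(P)=3$. Thus $\gamma_1(\cM)=2$, so $\lambda_2\le 13$ and $\lambda_i=0$ for $i\ge 3$.

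With $\gamma_1(\cM)=2$, I would finish by exploiting the standard equations (\ref{se1}), (\ref{se2}), (\ref{se5}). Combining (\ref{se1}) and (\ref{se2}) gives $a_{10}+2a_{18}=(11\cdot 2^k-24)/8$ and $a_2=a_{18}+2-3\cdot 2^{k-3}$, whence $a_{18}\ge 3\cdot 2^{k-3}-2$. Substituting these into (\ref{se5}) (with $\binom{2}{2}=1$, $\binom{10}{2}=45$, $\binom{18}{2}=153$) and simplifying yields
\[
  (79+\lambda_2)\cdot 2^{k-3}=32 a_{18}+96.
\]
Plugging in the lower bound for $a_{18}$ gives $\lambda_2\ge 17+32/2^{k-3}\ge 17$, which contradicts $\lambda_2\le 13$.

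The main obstacle is simply identifying the right linear combination of the standard equations to obtain a clean expression for $\lambda_2$ in terms of $a_{18}$ and $k$, since the bound on $a_{18}$ coming from non-negativity of $a_2$ is precisely strong enough to force $\lambda_2$ past the trivial upper bound $\lfloor n/2\rfloor=13$. The ruling-out of $\gamma_1(\cM)=3$ is the technically delicate step because it needs the classification results for cardinalities $2$ and $10$ simultaneously, but both inputs are already available in the preceding material.
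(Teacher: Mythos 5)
Your proof is correct and follows essentially the same route as the paper's: rule out points of multiplicity $3$ via Corollary~\ref{cor_4_div_card_10} and Lemma~\ref{lemma_lower_bound_space_mult}, then derive $\lambda_2\ge 17+2^{8-k}>13$ from the standard equations. The only cosmetic difference is that you carry $a_2\ge 0$ as a variable, whereas the paper notes $a_2=0$ outright (no $4$-divisible multiset of cardinality $2$ exists), which gives the same bound with equality.
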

\begin{proof}
  W.l.o.g.\ we assume $2\le \gamma_1(\cM)\le 3$. The possible hyperplane multiplicities are given by $18$ and $10$. 
  If there exists a point $P$ with multiplicity $\cM(P)=3$, then Corollary~\ref{cor_4_div_card_10} implies $\cM(H)=18$ for 
  all hyperplanes $H$ that contain $P$. With this Lemma~\ref{lemma_lower_bound_space_mult} yields $\cM(P)\ge 11$ -- contradiction. 
  Thus, we have $\gamma_1(\cM)=2$ and the standard equations yield $\lambda_2=17+2^{8-k}$  for the 
  dimension $k$ of the span of $\cM$, which clearly contradicts $\#\cM=26$.  
\end{proof}

\begin{lemma}
  \label{lemma_8_div_card_27}
  Let $\cM$ be an $8$-divisible multiset of points in $\PG(v-1,2)$ with cardinality $27$. Then, we have $\gamma_1(\cM)\ge 4$.
\end{lemma}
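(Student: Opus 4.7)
The plan is to argue by contradiction, assuming $\gamma_1(\cM)\le 3$, and then exclude all three possible hyperplane multiplicities one by one.

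First I would observe that since $\cM$ is $8$-divisible we have $\cM(H)\equiv \#\cM=27\equiv 3\pmod 8$ for every hyperplane $H$. Assuming $\cM$ is spanning in $\PG(k-1,2)$ (the case $k=1$ trivially gives $\gamma_1(\cM)=27\ge 4$), we get $\cM(H)\in\{3,11,19\}$. Now I would rule out two of these: a hyperplane $H$ with $\cM(H)=3$ would yield, via Lemma~\ref{lemma_divisibility_hyperplane}, a $4$-divisible multiset $\cM|_H$ of cardinality $3$, contradicting $\Gamma_2(4,3)=\infty$ from Proposition~\ref{prop_Gamma_infty}; and a hyperplane with $\cM(H)=11$ would give a $4$-divisible multiset of cardinality $11$, which by Proposition~\ref{prop_4_div_card_11} is of the form $\chi_E+4\cdot\chi_P$ and hence contains a point of multiplicity at least $4$, contradicting $\gamma_1(\cM)\le 3$.

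Therefore every hyperplane must have multiplicity exactly $19$. At this point I would apply Lemma~\ref{lemma_repeated_simplex}: a spanning multiset in which all hyperplanes carry the same multiplicity $s=19$ must satisfy $s\,[k]_2=n\,[k-1]_2$, i.e.\ $19\cdot(2^k-1)=27\cdot(2^{k-1}-1)$. This simplifies to $11\cdot 2^{k-1}=-8$, which has no solution, giving the desired contradiction.

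I expect no serious obstacle: the entire argument rests on the mod-$8$ restriction for hyperplane multiplicities together with the already-established non-existence results for small $4$-divisible multisets. The only subtlety is to remember to first reduce to the spanning case so that the standard equations (and hence Lemma~\ref{lemma_repeated_simplex}) are applicable; once that reduction is made the proof is a short elimination.
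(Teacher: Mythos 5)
Your proposal is correct and follows essentially the same route as the paper: the hyperplane multiplicities are reduced to $\{11,19\}$ (the paper leaves the exclusion of $3$ implicit), multiplicity $11$ is ruled out via Proposition~\ref{prop_4_div_card_11}, and the all-$19$ case is killed by Lemma~\ref{lemma_repeated_simplex}. Your explicit computation $11\cdot 2^{k-1}=-8$ is just the worked-out form of the contradiction the paper invokes.
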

\begin{proof}
  The possible hyperplane multiplicities are given by $19$ and $11$. If the latter occurs, 
  then there is a point of multiplicity at least $4$, see Proposition~\ref{prop_4_div_card_11}.
  Otherwise we can apply Lemma~\ref{lemma_repeated_simplex} to obtain a contradiction.
\end{proof}

\begin{lemma}
  \label{lemma_8_div_card_35}
  Let $\cM$ be an $8$-divisible multiset of points in $\PG(v-1,2)$ with cardinality $35$. Then, we have $\gamma_1(\cM)\ge 4$.
\end{lemma}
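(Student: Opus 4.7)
The plan is to argue by contradiction: assume there is an $8$-divisible multiset $\cM$ in $\PG(v-1,2)$ with $\#\cM=35$ and $\gamma_1(\cM)\le 3$. Passing to the span I may take $\cM$ to be spanning in $\PG(k-1,2)$; since $\gamma_1(\cM)\le 3$ the support contains at least $\lceil 35/3\rceil=12$ distinct points, so $k\ge 4$ and the standard equations apply. Because $35\equiv 3\pmod 8$ and $\cM$ is $8$-divisible, every hyperplane $H$ satisfies $\cM(H)\equiv 3\pmod 8$, so together with $0\le \cM(H)<35$ the possible hyperplane multiplicities are restricted to $\{3,11,19,27\}$.

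The first real step is to eliminate the two smaller multiplicities. By Lemma~\ref{lemma_divisibility_hyperplane}, $\cM|_H$ is $4$-divisible. Proposition~\ref{prop_Gamma_infty} rules out $\cM(H)=3$, since no $4$-divisible multiset of cardinality $3$ exists over $\F_2$. Moreover, by Proposition~\ref{prop_4_div_card_11} every $4$-divisible multiset of cardinality $11$ has the form $\chi_E+4\chi_P$ and thus contains a point of multiplicity $4$, which contradicts $\gamma_1(\cM)\le 3$; hence $\cM(H)\neq 11$ as well. So only $\cM(H)\in\{19,27\}$ remain.

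The final step is a one-line counting argument using the standard equations. Equation~(\ref{se1}) gives $a_{19}+a_{27}=2^k-1$, and equation~(\ref{se2}) gives $19a_{19}+27a_{27}=35(2^{k-1}-1)$. Subtracting $19$ times the former from the latter yields
$$
  8a_{27}=35(2^{k-1}-1)-19(2^k-1)=-3\cdot 2^{k-1}-16,
$$
which is negative for every $k$, delivering the desired contradiction. I do not expect a genuine obstacle here: unlike the analogous lemmas for cardinalities $24$, $26$, and $27$, no further subdivision according to $\gamma_1(\cM)\in\{1,2,3\}$ is required, and the only nontrivial input is the exclusion of the hyperplane multiplicity $11$ via the classification in Proposition~\ref{prop_4_div_card_11}.
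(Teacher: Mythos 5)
Your proof is correct and follows essentially the same route as the paper: both arguments reduce the hyperplane multiplicities to $\{11,19,27\}$ and invoke Proposition~\ref{prop_4_div_card_11} to show that a hyperplane of multiplicity $11$ forces a point of multiplicity at least~$4$. The only difference is the closing step once $\cM(H)\in\{19,27\}$ for all hyperplanes: you derive $a_{27}<0$ from the standard equations (\ref{se1}) and (\ref{se2}), whereas the paper applies Lemma~\ref{lemma_lower_bound_space_mult} to conclude $\cM(P)>3$ for every point; both are valid one-line finishes.
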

\begin{proof}
  The possible hyperplane multiplicities are given by $11$, $19$, and $27$. However, if there exists a hyperplane $H$ with
  $\cM(H)=11$, then Proposition~\ref{prop_4_div_card_11} yields the existence of a point with multiplicity at least $4$. 
  Otherwise Lemma~\ref{lemma_lower_bound_space_mult} gives $\cM(P)>3$ for every point $P$.
\end{proof}

\begin{lemma}
  \label{lemma_8_div_card_39}
  Let $\cM$ be an $8$-divisible multiset of points in $\PG(v-1,2)$ with cardinality $39$. Then, we have $\gamma_1(\cM)\ge 4$.
\end{lemma}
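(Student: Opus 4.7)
The plan is to argue by contradiction, following the pattern of Lemma~\ref{lemma_8_div_card_37}. Suppose $\cM$ is $8$-divisible of cardinality $39$ with $\gamma_1(\cM)\le 3$; $8$-divisibility forces $\cM(H)\equiv 39\equiv 7\pmod 8$, so the hyperplane multiplicities lie in $\{7,15,23,31\}$.

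As a first reduction I would eliminate points of multiplicity $2$: if $\cM(Q)=2$, then the projection $\cM_Q$ is $8$-divisible with $\#\cM_Q=37$, and any line through $Q$ has multiplicity at most $2+3+3=8$, giving $\gamma_1(\cM_Q)\le 6$, which contradicts Lemma~\ref{lemma_8_div_card_37}. Hence $\lambda_2=0$ and $\cM(P)\in\{0,1,3\}$ for every point $P$. Next, if every hyperplane had multiplicity at least $23$, then Lemma~\ref{lemma_lower_bound_space_mult} applied at an arbitrary point with $s=23$ would force $\cM(P)\ge 7+16/2^{k-2}>7$, contradicting $\cM(P)\le 3$; so at least one hyperplane $H$ has multiplicity in $\{7,15\}$.

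If $\cM(H)=7$, Corollary~\ref{cor_simplex_repetition_special} forces $\cM|_H=\chi_E$ for a plane $E$. If $\cM(H)=15$, Corollary~\ref{cor_4_div_card_15} combined with Proposition~\ref{prop_multiples_of_affine_spaces}, under $\gamma_1(\cM|_H)\le 3$ and $\lambda_2(\cM|_H)=0$, leaves only three possibilities for $\cM|_H$: a solid $\chi_K$; the structure $\chi_{L_0}+3\chi_{E_0\setminus L_0}$ for a plane $E_0$ with a distinguished line $L_0\le E_0$; or the projective-base-$B_7$ configuration of Corollary~\ref{cor_4_div_card_15}. The solid case is ruled out immediately, because $\chi_K$ (for a solid $K$) is itself $8$-divisible in the ambient space, so $\cM-\chi_K$ would be $8$-divisible of cardinality $24$ with $\gamma_1\le 3$, contradicting Lemma~\ref{lemma_8_div_card_24}.

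To close the argument I would take $\cM$ with minimum-dimensional span $k$, rule out $k\le 5$ by plugging $\lambda_2=0$ and $\lambda_1+3\lambda_3=39$ into the standard equations together with the congruence $\cM(H)\equiv 7\pmod 8$, and then project through a multiplicity-$0$ point (which exists once $k\ge 6$) to reduce the dimension. This yields $\cM_Q$ with $\#\cM_Q=39$, $\gamma_1(\cM_Q)\le 2\gamma_1(\cM)\le 6$, and span of dimension $k-1$. The main obstacle is that this minimality reduction only closes the loop if the entire case analysis above is valid under the weaker hypothesis $\gamma_1\le 7$ (which is what $\gamma_1(\cM_Q)\le 6$ guarantees); establishing this requires an auxiliary lemma in the spirit of Lemma~\ref{lemma_8_div_card_37_aux_1}, whose delicate core is excluding $\cM(P)\in\{4,5,6,7\}$ by a fine analysis of the residue $\cM|_H-\chi_E$ in mult-$15$ hyperplanes through such $P$ (the options $8\chi_{P'}$, $4\chi_{L\setminus P_0}$, etc.\ from Proposition~\ref{prop_multiples_of_affine_spaces}), combined with repeated applications of Lemma~\ref{lemma_lower_bound_space_mult} to the mult-$23$ and mult-$31$ hyperplanes through $P$.
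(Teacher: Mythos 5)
Your opening reduction is exactly the paper's: a point of multiplicity $2$ is eliminated by projecting to an $8$-divisible multiset of cardinality $37$ with $\gamma_1\le 6$ and invoking Lemma~\ref{lemma_8_div_card_37}, giving $\lambda_2=0$ and (via Proposition~\ref{prop_Gamma_1}) $\gamma_1(\cM)=3$. The problem is your endgame. Projecting through a point $Q$ of multiplicity $0$ only guarantees $\gamma_1(\cM_Q)\le 6$, so to close the minimality loop you would need the non-existence statement under the hypothesis $\gamma_1\le 7$ (or at least $\le 6$). That strengthened statement is \emph{false}: Theorem~\ref{main_thm} records $\Gamma_2(8,39)=4$, i.e.\ $8$-divisible multisets of cardinality $39$ with maximum point multiplicity $4$ genuinely exist (e.g.\ $4\cdot\chi_L+\chi_E+\cM'$ with $\#\cM'=20$, cf.\ the constructions behind the upper bounds). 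Consequently the ``auxiliary lemma in the spirit of Lemma~\ref{lemma_8_div_card_37_aux_1}'' that you hope will exclude point multiplicities $4$--$7$ cannot exist for cardinality $39$; the analogue works for $37$ precisely because there $\gamma_1\ge 8$ is true. The only way to salvage a projection-based descent is to find a zero point $Q$ all of whose secant lines have multiplicity at most $3$ (as is done by an explicit count in Lemma~\ref{lemma_8_div_card_41}), and you do not attempt that count. Even then, the whole burden shifts to the low-dimensional base case, which you propose to dispatch ``by plugging into the standard equations''; this is unsubstantiated -- the three standard equations in the five unknowns $a_7,a_{15},a_{23},a_{31},\lambda_3$ do not yield a contradiction for $k=5$, and the paper needs a substantial geometric argument there.

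For comparison, the paper's proof after the $\lambda_2=0$ reduction is entirely direct and never projects through a zero point. It shows every hyperplane through a point of multiplicity $3$ has multiplicity at least $15$ (since multiplicity-$7$ hyperplanes are planes), forces such a point into a hyperplane $H$ of multiplicity exactly $15$ via Lemma~\ref{lemma_lower_bound_space_mult}, and uses $\gamma_1\le 3$, $\lambda_2=0$ together with Corollary~\ref{cor_4_div_card_15} to pin down $\cM|_H=3\chi_E-2\chi_L$ (your trichotomy for multiplicity-$15$ hyperplanes is, incidentally, incomplete -- you omit $\chi_E+\chi_{K\setminus E''}$ with the affine solid disjoint from $E$ -- but that case carries no triple point, so it is harmless once one restricts to hyperplanes through a triple point). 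From there the paper extracts a second such plane $E'$, shows every hyperplane containing $\langle E,E'\rangle$ has multiplicity $31$, kills $k=5$ via Lemma~\ref{lemma_8_div_card_24}, and finishes with a spectrum count relative to a fixed multiplicity-$15$ hyperplane yielding $\lambda_3\ge 10$ against $\lambda_1\ge 11$. None of this structural work appears in your proposal, and it is exactly the part that cannot be replaced by the descent you describe.
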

\begin{proof}
  Assume that $\cM$ is an $8$-divisible multiset of points in $\PG(v-1,2)$ with cardinality $39$, $\gamma_1(\cM)\le 3$, and minimum possible dimension $k$ of its span. 
  If $Q$ would be a point with multiplicity $2$, then the projection $\cM_Q$ through $Q$ would be $8$-divisible with cardinality $37$ and $\gamma_1\!\left(\cM_Q\right)\le 6$, 
  so that Lemma~\ref{lemma_8_div_card_37} yields a contradiction. Thus, we have $\lambda_2=0$ and $\gamma_1(\cM)=3$, see Proposition~\ref{prop_Gamma_1}. 
  Since $\gamma_1(\cM)\le 3$ and $\#\cM=39$, we clearly have $k\ge 4$. If $k=4$, then the multiset of points $\cM'$ defined by $\cM'(P)=3-\cM(P)$ is $8$-divisible with 
  cardinality $6$ -- contradiction. Thus, we have $k\ge 5$. Since each hyperplane $H$ with multiplicity $7$ is given by $\cM|_H=\chi_E$ for some plane $E$, see 
  Corollary~\ref{cor_simplex_repetition_special}, each hyperplanes $H'$ that contains a point of multiplicity $3$ has multiplicity $\cM(H')\ge 15$. 
  Lemma~\ref{lemma_lower_bound_space_mult} yields that each point of multiplicity $3$ is contained in a hyperplane $H$ of multiplicity $15$. Using
  $\gamma_1(\cM)\le 3$ and $\lambda_2=0$, Corollary~\ref{cor_4_div_card_15} yields $\cM|_H=3\cdot\chi_E-2\cdot\chi_L$ to a plane $E$ and a line $L\le E$. So, $\lambda_3\ge 1$ implies 
  $\lambda_3\ge 4$.
  
  Let $H$ be hyperplane with multiplicity $15$ and $E$ a plane, $L\le E$ a line such that $\cM|_H= 3\cdot\chi_E-2\cdot\chi_L$. Choose a subspace $K\le H$ of codimension $2$ intersecting 
  $E$ in a line of multiplicity $7$, i.e., containing two points of multiplicity $3$ and one point of multiplicity $1$. We denote the other two hyperplanes containing $K$ by 
  $H'$ and $H''$. W.l.o.g.\ we assume $\cM(H')=15$ and $\cM(H'')=23$. So, let $E'\le H'$ a plane and $L'\le E'$ a line such that $\cM|_{H'}= 3\cdot\chi_{E'}-2\cdot\chi_{L'}$. We 
  conclude $\lambda_3\ge 6$, $\lambda_1\ge 5$ and denote the solid spanned by $E$ and $E'$ by $S$. By construction we have $\cM(S)\ge 23$. However, we have $\cM(H)\neq 23$ for any 
  hyperplane $H\ge S$ since $\cM|_H-3\cdot\chi_E+2\cdot\chi_L$ would be $4$-divisible with cardinality $8$ containing two points of multiplicity $3$, which contradicts 
  Proposition~\ref{prop_multiples_of_affine_spaces}. Thus, we have $\cM(H)=31$ for every hyperplane $H$ that contains $S$, so that Lemma~\ref{lemma_lower_bound_space_mult} 
  yields $\cM(S)=23+2^{8-k}$. If $k=5$, then $\cM(S)=31$. However, since no three points of multiplicity $3$ form a line, $S$ can contain at most $8$ points of multiplicity $3$ and 
  seven points of multiplicity $1$, which yields $\cM\ge \chi_S$ and $\cM- \chi_S$ is $8$-divisible of cardinality $31$, so that Lemma~\ref{lemma_8_div_card_24} yields a contradiction.
  
  Let $H$ be an arbitrary but fixed hyperplane of multiplicity $15$ and $E$ be the corresponding plane that contains the non-zero points of $\cM|_H$. Now consider subspace $K\le H$ of codimension 
  $2$. In $2^{k-4}-1$ cases $E\le K$ and the other two hyperplanes containing $K$ have multiplicities $23$ and $31$. In $6\cdot 2^{k-4}$ cases $K$ intersects $E$ in a line of multiplicity $7$ and 
  the other two hyperplanes containing $K$ have multiplicities $15$ and $23$. In $2^{k-4}$ cases $K$ intersects $E$ and a line of multiplicity $3$ and there are two cases for the other two
  hyperplanes $H'$, $H''$ containing $K$. Either $\left\{\cM(H'),\cM(H'')\right\}=\{7,23\}$ or $\cM(H')=\cM(H'')=15$. Denote the number of occurrence of the first case by $x$, so that $x\in\N$
  with $x\le 2^{k-4}$. With this, we compute
  \begin{eqnarray*}
    a_7&=&x,\\
    a_{15} &=& 1+8\cdot 2^{k-4}-2x,\\ 
    a_{23} &=& 7\cdot 2^{k-4}-1+x,\text{ and}\\
    a_{31} &=& 2^{k-4}-1,
  \end{eqnarray*}   
  so that $2^{9-k}+28+2^{8-k}x=3\lambda_3$, which implies $\lambda_3\ge 10$ and $\lambda_1\le 9$. However, in $E$ there are three points of multiplicity $1$ and there exists a hyperplane $H\ge E$ 
  with $\cM(H)=23$, so that Proposition~\ref{prop_multiples_of_affine_spaces}, $\gamma_1(\cM)\le 3$, and $\lambda_2=0$ imply $\lambda_1\ge 3+8=11>9$ -- contradiction.
\end{proof}

\begin{lemma}
  \label{lemma_8_div_card_41_aux_1}
  Let $\cM$ be a spanning $8$-divisible multiset of points in $\PG(v-1,2)$ with cardinality $41$ and $1\le \gamma_1(\cM)\le 3$. Then, we have
  \begin{eqnarray*}
    a_{17} &=& 9\cdot 2^{k-4}+2+a_{33},\\ 
    a_{25} &=& 7\cdot 2^{k-4}-3-2a_{33},\\ 
    \lambda_2+3\lambda_3  &=& 11 +2^{8-k}\cdot \left(3+a_{33}\right),
  \end{eqnarray*}
  and $k\ge 5$. Moreover, for each line $L$ there exists a hyperplane $H\ge L$ with $\cM(H)=17$ and a point $P\le L$ with $\cM(P)\le 1$.
\end{lemma}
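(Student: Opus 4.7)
The plan is to first pin down the possible hyperplane multiplicities and then read off the three spectrum identities from the standard equations. Since $\cM$ is spanning and $8$-divisible with $\#\cM = 41 \equiv 1 \pmod 8$, every hyperplane multiplicity lies in $\{1,9,17,25,33\}$. By Lemma~\ref{lemma_divisibility_hyperplane} each restriction $\cM|_H$ is $4$-divisible, and Proposition~\ref{prop_Gamma_infty} rules out cardinalities $1$ and $9$. Hence only $\cM(H)\in\{17,25,33\}$ occur. The assumption $\gamma_1(\cM)\le 3$ forces $\lambda_i = 0$ for $i\ge 4$ and $\sum_i \binom{i}{2}\lambda_i = \lambda_2 + 3\lambda_3$. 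Solving the standard equations (\ref{se1}) and (\ref{se2}) for $a_{17},a_{25}$ in terms of $a_{33}$ produces the first two displayed formulas; substituting these into (\ref{se5}) and simplifying yields $\lambda_2 + 3\lambda_3 = 11 + 2^{8-k}(3+a_{33})$. This step is purely arithmetic.

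For the dimension bound, $(2^k-1)\cdot 3 \ge 41$ forces $k\ge 4$. To rule out $k=4$, I would pass to the complementary multiset $\cM'(P) := 3-\cM(P)$ on $\PG(3,2)$. The ``shift'' $3\cdot\chi_V$ contributes a constant $3\cdot[k-1]_2 = 21$ to each hyperplane multiplicity and $3\cdot[k]_2 = 45$ to the total, and since $8 \mid 3\cdot 2^{k-1}=24$, the multiset $\cM'$ is again $8$-divisible. Its cardinality is $45-41 = 4$, which contradicts $\Gamma_2(8,4)=\infty$ from Proposition~\ref{prop_Gamma_infty}, so $k\ge 5$.

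For the ``Moreover'' part, let $L$ be a line; then $\cM(L)\le 3\cdot 3 = 9$. If every hyperplane $H\ge L$ satisfied $\cM(H)\ge 25$, then the second half of Lemma~\ref{lemma_lower_bound_space_mult} (with $s=25$, $n=41$, $q=2$, and $l=2\le k-2$ since $k\ge 5$) would give
\[
  \cM(L) \;>\; 25 - \frac{41-25}{2-1} \;=\; 9,
\]
contradicting $\cM(L)\le 9$. Hence some hyperplane $H\ge L$ has $\cM(H)=17$. To locate a low-multiplicity point on $L$, suppose for contradiction that every point of $L$ has multiplicity $\ge 2$. Then $\cM|_H$ is $4$-divisible of cardinality $17$ with $2\le \gamma_1(\cM|_H)\le 3$, so Lemma~\ref{lemma_4_div_card_17_aux} applies and says that any three points of multiplicity $\ge 2$ span a plane. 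But the three points of $L$ are collinear, a contradiction; hence $L$ contains a point $P$ with $\cM(P)\le 1$.

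The main obstacle is the last clause. The spectrum identities and the ruling out of $k=4$ are mechanical once one recognises which ``moves'' preserve $8$-divisibility, whereas the existence of a hyperplane of multiplicity exactly $17$ containing any prescribed line, together with the reuse of Lemma~\ref{lemma_4_div_card_17_aux} via the restriction $\cM|_H$, is where the real content sits. Everything else is elimination in the standard equations.
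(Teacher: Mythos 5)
Your proposal is correct and follows essentially the same route as the paper: rule out hyperplane multiplicities other than $17,25,33$, solve the standard equations, exclude $k=4$ via the complementary multiset $\cM'(P)=3-\cM(P)$ of cardinality $4$, and use Lemma~\ref{lemma_lower_bound_space_mult} to force a hyperplane of multiplicity $17$ through any line. The only cosmetic difference is in the final step, where the paper applies Proposition~\ref{prop_4_div_card_11} directly to $\cM|_H-2\cdot\chi_L$ while you invoke the ``three points of multiplicity at least $2$ span a plane'' clause of Lemma~\ref{lemma_4_div_card_17_aux}, whose own proof is exactly that argument.
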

\begin{proof}
  Since no $4$-divisible multiset of points of cardinality $9$ exists, the possible multiplicities of the hyperplanes are given by $17$, $25$, and $33$. Using the standard equations we compute
  the stated equations. Since $\gamma_1(\cM)\le 3$, we clearly have $k\ge 4$. If $k=4$, then the multiset of points $\cM'$ defined by $\cM'(P)=3-\cM(P)$ is $8$-divisible with cardinality $4$, 
  which is impossible. 
  
  Let $L$ be an arbitrary line. If $L$ is not contained in a hyperplane of multiplicity $17$, then Lemma~\ref{lemma_lower_bound_space_mult} yields $\cM(L)>25-16=9$. However, $\gamma_1(\cM)\le 3$
  implies $\cM(L)\le 9$ -- contradiction. So, let $H\ge L$ be a hyperplane with $\cM(H)=17$. If $\cM(P)\ge 2$ for all points $P\le L$, then $\cM|_H-2\cdot \chi_L$ is $4$-divisible with 
  cardinality $11$, so that Proposition~\ref{prop_4_div_card_11} contradicts $\gamma_1(\cM)\le 3$. 
\end{proof}

\begin{lemma}
  \label{lemma_8_div_card_41_aux_2}
  Let $\cM$ be a spanning $8$-divisible multiset of points in $\PG(v-1,2)$ with cardinality $41$ and $1\le \gamma_1(\cM)\le 3$. Then, there does not exist a 
  hyperplane $H$ such that $\cM|_H$ is given as specified in Lemma~\ref{lemma_4_div_card_17_special_2}.
\end{lemma}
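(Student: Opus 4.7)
The plan is to assume for contradiction that such a hyperplane $H$ exists, and combine the explicit matrix description of $\cM|_H$ from Lemma~\ref{lemma_4_div_card_17_special_2} with the spectrum equations of Lemma~\ref{lemma_8_div_card_41_aux_1} to rule out each admissible configuration. From Lemma~\ref{lemma_4_div_card_17_special_2} I record that $\cM|_H$ spans a fixed $5$-dimensional subspace $H_0\subseteq H$, has a unique double point $P_2=e_2$ and exactly two triple points $P_3=e_1$, $P_3'=e_1+e_2+e_3$, and the two lines $L_1=\langle P_2,P_3\rangle$, $L_1'=\langle P_2,P_3'\rangle$ both carry the $(1,2,3)$ mass pattern (hence $\cM$-multiplicity $6$). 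A direct computation from the matrix gives $\cM|_H(\ker e_i^\ast)=9$ for $i=1,2,3$.

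For every hyperplane $H^\ast$ of $V$, $\cM(H^\ast)\in\{17,25,33\}$ and $\cM|_H(H^\ast\cap H)=\cM|_H(H^\ast\cap H_0)\in\{5,9,13,17\}$, so $\cM(H^\ast\cap H^c)\in\{0,4,8,12,16,20,24,28\}$. The equations $\lambda_2+3\lambda_3=11+2^{8-k}(3+a_{33})$ and $\lambda_1+\lambda_2=41-2\lambda_2-3\lambda_3$ from Lemma~\ref{lemma_8_div_card_41_aux_1}, together with $\lambda_1\ge 9$, $\lambda_2\ge 1$, $\lambda_3\ge 2$ from $\cM|_H$, restrict $a_{33}\in\{0,1,2\}$ when $k=6$, and give a finite list of admissible profiles $(\lambda_1',\lambda_2',\lambda_3')$ for $\cM|_{H^c}$ at each admissible $k\ge 6$. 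The extremal case $k=6$, $a_{33}=2$ yields $(\lambda_1,\lambda_2,\lambda_3)=(9,1,10)$, so $\cM|_{H^c}$ consists of exactly eight triple points; then $\cM(H^\ast\cap H^c)$ is a multiple of $3$, and intersecting with the previous set forces $\cM(H^\ast\cap H^c)\in\{0,12,24\}$. Choosing $H^\ast=\ker e_i^\ast$ for $i=1,2,3$ (with $\cM|_H(H^\ast\cap H)=9$) restricts $\cM(H^\ast\cap H^c)\in\{8,16,24\}$; the multiple-of-$3$ constraint leaves only $\cM(H^\ast\cap H^c)=24$, so $\cM(\ker e_i^\ast)=33$ for three distinct hyperplanes, contradicting $a_{33}=2$.

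For the remaining cases ($a_{33}\in\{0,1\}$ and $a_{33}=2$ at $k\ge 7$) the residual $\cM|_{H^c}$ carries points of multiplicities $1$ and $2$ as well, so the clean modular obstruction disappears. Here I project through the double point $P_2$: Lemma~\ref{lemma_projection} produces $\cM_{P_2}$ which is $8$-divisible of cardinality $39$; the lines $L_1, L_1'$ project to two distinct multiplicity-$4$ points $Q_1, Q_2$ sharing a $\cM_{P_2}$-line of mass $9$; Lemma~\ref{lemma_8_div_card_41_aux_1} (``every line of $\cM$ contains a point of multiplicity at most $1$'') bounds every line through $P_2$ by mass $6$ and hence forces $\gamma_1(\cM_{P_2})=4$; and Corollary~\ref{cor_4_div_card_15} together with Proposition~\ref{prop_multiples_of_affine_spaces} applied to the $4$-divisible restriction $\cM_{P_2}|_{H/\langle P_2\rangle}$ of cardinality $15$ pins it down to $\chi_{E'}+4\chi_{Q_1}+4\chi_{Q_2}$ for a plane $E'$ disjoint from $\{Q_1,Q_2\}$. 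The plan is then to rule out each residual profile by matching this rigid image against the count of additional multiplicity-$4$ points of $\cM_{P_2}$ coming from triples $T\in H^c$ with $\cM(T+P_2)=1$, and comparing with the mass budget of $\cM_{P_2}$ outside $H/\langle P_2\rangle$.

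The main obstacle will be carrying out this last matching uniformly across all admissible $(k,a_{33})$: unlike the clean modular cancellation that disposes of $k=6,\ a_{33}=2$, each remaining subcase requires its own arithmetic check against the rigid structure of $\cM_{P_2}|_{H/\langle P_2\rangle}$ and against the hyperplane spectrum of $\cM_{P_2}$ inherited from $\cM$. The delicate part is tracking how the $H^c$-mass splits between singles, doubles, and triples, since the additional multiplicity-$4$ points of $\cM_{P_2}$ come only from certain constrained pairs $(T, T+P_2)$ in $H^c$ and each admissible profile yields a different admissible count of such pairs.
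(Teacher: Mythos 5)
Your argument is incomplete: of the case distinction you set up, you fully resolve only the single extremal subcase $k=6$, $a_{33}=2$ (where all of $\cM|_{H^c}$ consists of triple points and the mod-$3$ obstruction against the three hyperplanes meeting $H$ in the $9$-point quotients works; that part is correct). For every other admissible profile you give only a plan -- project through $P_2$, match multiplicity-$4$ points of $\cM_{P_2}$ against the rigid image $\chi_{E'}+4\chi_{Q_1}+4\chi_{Q_2}$, compare mass budgets -- and you yourself flag the execution of this matching as ``the main obstacle.'' That is precisely the part that would constitute the proof, and it is not carried out. Note also that projecting through $P_2$ cannot by itself yield a contradiction here: $\cM_{P_2}$ is $8$-divisible of cardinality $39$ with $\gamma_1\!\left(\cM_{P_2}\right)=4$, which is perfectly consistent with Lemma~\ref{lemma_8_div_card_39}, so genuinely new structural input is needed to close each residual case. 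A second gap is that your case analysis is not bounded in $k$: you enumerate profiles ``at each admissible $k\ge 6$'' and speak of ``$a_{33}=2$ at $k\ge 7$,'' but nothing in the proposal reduces the problem to finitely many dimensions, so even granting the per-case arithmetic the argument does not terminate as written.

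For comparison, the paper's proof takes a different and self-contained route: it picks the codimension-$2$ subspace $S=\left\langle e_1,e_2,e_4,e_5,E\right\rangle\le H_1$ with $\cM(S)=9$, observes that one of the other two hyperplanes through $S$ has multiplicity $17$ and again contains a line with the $(1,2,3)$ pattern, so Lemma~\ref{lemma_4_div_card_17_special_2} applies a second time and pins down $\cM|_{H_1\cup H_2}$ explicitly ($14$ singles, one double, three triples, accounting for $25$ of the $41$ mass). It then bounds $k$ by exhibiting, for $k\ge 8$, a multiplicity-zero point $Q$ all of whose lines have mass at most $3$ and projecting through $Q$ (preserving $\gamma_1\le 3$), which reduces everything to $k\in\{6,7\}$; each of these two cases is then killed by a concrete geometric contradiction (a solid of multiplicity at least $19>17$ for $k=6$, and an affine solid whose span is too small for $k=7$). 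You would need analogues of both the dimension reduction and the case-closing arguments to turn your proposal into a proof.
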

\begin{proof}
  Assume that $\cM$ is a spanning $8$-divisible multiset of points in $\PG(k-1,2)$ with cardinality $41$ and $1\le \gamma_1(\cM)\le 3$, such that $\cM|_{H_1}$ 
  is as specified in Lemma~\ref{lemma_4_div_card_17_special_2} for some hyperplane $H_1$. Since $\cM|_{H_1}$ spans a $5$-dimensional subspace we have $k\ge 6$ for the 
  dimension of the point set spanned by $\cM$. Let $E:=\left\langle e_{7},\dots, e_k\right\rangle$, where we also allow $E$ to be an empty space for $k=6$. W.l.o.g.\ we 
  also assume coordinates as in Lemma~\ref{lemma_4_div_card_17_special_2}, so that especially $H_1=\left\langle e_1,\dots,e_5,E\right\rangle$. Consider the subspace 
  $S:=\left\langle e_1,e_2,e_4,e_5,E\right\rangle\le H_1$ with $\cM(S)=9$, so that we have $\cM(H_2)=17$ and $\cM(H_3)=25$ for the two other hyperplanes containing $S$. 
  Note that the line $L:=\left\langle e_1,e_2\right\rangle$ is contained in $H_2\ge S$ and consists of a point of multiplicity $i$ for all $1\le i\le 3$, so that we can 
  apply Lemma~\ref{lemma_4_div_card_17_special_2} to $\cM|_{H_2}$. W.l.o.g.\ we assume that the second point of multiplicity $3$ in $H_2$ (that is not contained in $S$) has 
  coordinates $e_6$. With this, the point set $\cM|_{H_1\cup H_2}$ is given by the columns of 
  $$
  \begin{pmatrix}
  1 1 1 1 1 1 1 1 0 0 0 0 0 0 0 0 0 0 0 0 1 0 1 1 1\\
  0 0 0 1 1 1 1 0 1 1 0 0 0 0 0 1 1 0 0 0 1 1 1 1 0\\
  0 0 0 0 1 1 1 1 0 0 1 0 1 0 1 0 1 0 0 0 0 0 0 0 0\\
  0 0 0 0 0 0 0 0 0 0 0 1 1 0 0 1 1 0 0 0 0 0 1 0 1\\
  0 0 0 0 0 0 0 0 0 0 0 0 0 1 1 1 1 0 0 0 0 0 0 1 1\\
  0 0 0 0 0 0 0 0 0 0 0 0 0 0 0 0 0 1 1 1 1 1 1 1 1
  \end{pmatrix}\!\!,
  $$
  where the entries in the rows $7$ to $k$ are all $0$ and not displayed. This multiset of points consists of $14$ points of multiplicity $1$, a unique point of multiplicity 
  $2$, and three points of multiplicity $3$.
  
  There exist at least
  $$
    \Omega:=2^k-1 -\left(\lambda_1+\lambda_2+\lambda_3\right) -{{\lambda_2+\lambda_3}\choose 2}-\lambda_3\cdot \lambda_1
  $$  
  points $Q$ of multiplicity $0$ such that every line that contains $Q$ has multiplicity at most $3$. Assume $k\ge 8$. If $\lambda_3\ge 3+4=7$, then 
  we have $\left(\lambda_1,\lambda_2,\lambda_3\right)\in\left\{(18,1,7),(16,2,7),(14,3,7),(15,1,8)\right\}$, where $\Omega\ge 38>0$. If $\lambda_3 <7$, then 
  $\lambda_2+\lambda_3\le 12$ implies $\Omega\ge 255 -30- 66- 126=43>0$. So, such a point $Q$ exists and the projection $\cM_Q$ of $\cM$ through $Q$ is 
  $8$-divisible with cardinality $41$ and $\gamma_1(\cM_P)\le 3$, see Lemma~\ref{lemma_projection}. W.l.o.g.\ we can assume that $k$ is minimal, so that it suffices 
  to consider the cases $k\in\{6,7\}$ in the following.   
  
  If $k=6$, then Lemma~\ref{lemma_8_div_card_41_aux_1} yields $\lambda_2+3\lambda_3  = 23 +4a_{33}$. Since the points in $H_1\cup H_2$ contribute $1\cdot 1+3\cdot 3=10$ to 
  $\lambda_2+3\lambda_3$ and the points outside of $H_1\cup H_2$ can contribute at most $16$, we have $a_{33}=0$ and $\lambda_2\equiv 2\pmod 3$, which implies 
  $\lambda_2=2$, $\lambda_3=7$, and $\lambda_1=16$. Let $P_2'$ denote the second point of multiplicity $2$, which lies outside of $S$. For the plane 
  $E':=\left\langle P_2',L\right\rangle$ we know that all hyperplanes $H$ that contain $E'$ have multiplicity $25$, so that Lemma~\ref{lemma_lower_bound_space_mult} 
  implies $\cM(E')=13$ and $\cM(S')=17$ for every solid $S'\ge E'$. From Lemma~\ref{lemma_8_div_card_41_aux_1} we know that the points with multiplicity at least $2$ are contained in an affine plane, so that 
  $E'$ consists of two points of multiplicity $2$, two points of multiplicity $3$, and three points of multiplicity $1$ forming a line. Starting from 
  $L'':=\left\langle e_2,e_1+e_2+e_3\right\rangle$ we can construct $E'':=\left\langle P_2',L''\right\rangle$ and deduce the same structure information for $E''$. 
  Since both $E'$ and $E''$ contain the two points of multiplicity $2$, their span $S':=\left\langle E',E''\right\rangle$ is a solid with multiplicity at 
  least $\cM(E')+2\cdot 3\ge 19>17$ -- contradiction.   
  
  If $k=7$, then Lemma~\ref{lemma_8_div_card_41_aux_1} yields $\lambda_2+3\lambda_3  = 17 +2a_{33}$. We choose $K:=\left\langle e_1,\dots,e_r\right\rangle$ and 
  $H_1:=\left\langle K, e_7\right\rangle$, so that $\cM(K)=\cM\!\left(H_1\right)=17$ and $\left\{\cM(H_2),\cM(H_3)\right\}=\{25,33\}$ for the other two 
  hyperplanes that contain $K$. Let $H_2:=\left\langle K,e_6\right\rangle$, so that $e_6\in H_2\backslash K$ is a point of multiplicity $3$ and   
  Proposition~\ref{prop_multiples_of_affine_spaces} implies $\cM\!\left(H_2\right)=33$. Thus, $\cM':=\cM|_{H_2}-\cM|_K$ is $4$-divisible with cardinality $16$ 
  and contains at least one point of multiplicity $3$ as well as five points of multiplicity $1$. Since there can be at most $\left\lfloor 8/3\right\rfloor=2$ more
  points of multiplicity $3$, Lemma~\ref{lemma_4_div_card_16_special} implies that $\cM'$ contains at most two points of multiplicity $3$ in total. 
  Since $\cM|_{H_3}-\cM|_{K}$ is $4$-divisible with cardinality $8$, Proposition~\ref{prop_multiples_of_affine_spaces} implies $3\le \lambda_3 \le 4$. 
  Combining this with $\lambda_2+3\lambda_3  = 17 +2a_{33}$ and $a_{33}\ge 1$ we conclude $\lambda_2\ge 7$. Using Proposition~\ref{prop_multiples_of_affine_spaces} 
  again we conclude $1\le \lambda_3\left(\cM'\right)$ and $\lambda_2\left(\cM'\right)\ge 2$, so that we can apply Lemma~\ref{lemma_4_div_card_16_special2}. 
  If $\lambda_3=3$, then $\lambda_2\le 1+3+4=8$, which contradicts $\lambda_2+3\lambda_3\ge 19$. Thus, we have $\lambda_3=4$ and the upper bound $\lambda_2\le 1+2+4$ 
  implies $a_{33}=1$, $\lambda_2=7$, and $\lambda_1=15$ using $\lambda_2+3\lambda_3  = 17 +2a_{33}$ and $\lambda_1+2\lambda_2+3\lambda_3=41$. Moreover, the six points 
  of multiplicity $1$ and the two points of multiplicity $3$ outside of $H_1$ form an affine solid. However, we have
  $$
    \left\langle e_6,e_2+e_6,e_1+e_2+e_6,e_1+e_2+e_4+e_6,e_1+e_2+e_5+e_6,e_1+e_4+e_5+e_6\right\rangle=\left\langle e_1,e_2,e_4,e_5,e_6\right\rangle,
  $$  
  which is a contradiction.
\end{proof}

\begin{lemma}
  \label{lemma_8_div_card_41}
  Let $\cM$ be an $8$-divisible multiset of points in $\PG(v-1,2)$ with cardinality $41$. Then, we have $\gamma_1(\cM)\ge 4$.
\end{lemma}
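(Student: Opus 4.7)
I argue by contradiction: assume that $\cM$ is $8$-divisible with $\#\cM=41$ and $\gamma_1(\cM)\le 3$, chosen so that the dimension $k$ of its span is minimal; by Lemma~\ref{lemma_8_div_card_41_aux_1} we have $k\ge 5$, the stated spectrum formulas, and every line of $\cM$ carries a point of multiplicity at most $1$ and lies in some hyperplane of multiplicity $17$.

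The first qualitative input I derive is that $\cM$ contains no line whose three points have multiplicities $1,2,3$. Such a line $L$ would sit in some hyperplane $H$ of multiplicity $17$, making $\cM|_H$ a $4$-divisible multiset of cardinality $17$ with $\gamma_1\le 3$ and a $(1,2,3)$-line; Lemma~\ref{lemma_4_div_card_17_special_2} would then pin $\cM|_H$ to the explicit configuration displayed there, contradicting Lemma~\ref{lemma_8_div_card_41_aux_2}. I use this, together with Proposition~\ref{prop_Gamma_1} (which gives $\gamma_1(\cM)\ge 2$), to rule out both $\gamma_1(\cM)\le 2$ and $\lambda_2(\cM)\ge 1$ at once. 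In either subcase I pick a point $Q$ with $\cM(Q)=2$; by Lemma~\ref{lemma_8_div_card_41_aux_1} each line $L\ni Q$ contains a point of multiplicity at most $1$, and checking the four patterns $(2,3,\ast),(2,2,\ast),(2,1,\ast),(2,0,\ast)$ under the forbidden-$(1,2,3)$-line constraint shows $\cM(L)\le 5$. Then Lemma~\ref{lemma_projection} gives $\gamma_1(\cM_Q)\le 3$ for the $8$-divisible cardinality-$39$ projection, contradicting Lemma~\ref{lemma_8_div_card_39}. Consequently, $\gamma_1(\cM)=3$, $\lambda_2(\cM)=0$, and all point multiplicities lie in $\{0,1,3\}$.

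With $\lambda_2=0$ the spectrum equation in Lemma~\ref{lemma_8_div_card_41_aux_1} reduces to $3\lambda_3=11+2^{8-k}(3+a_{33})$; combined with $\lambda_3\le\lfloor 41/3\rfloor=13$ and integrality of $a_{33}$, this eliminates $k=5$ immediately and leaves a finite list of admissible triples $(k,\lambda_3,a_{33})$. Next I attempt to reduce $k$ by projecting through a multiplicity-$0$ point $Q$: since multiplicities lie in $\{0,1,3\}$, the projection $\cM_Q$ satisfies $\gamma_1(\cM_Q)\le 3$ unless $Q$ sits at the zero position of a line of pattern $(1,3,0)$ or $(3,3,0)$. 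The number of such bad $Q$ is bounded by $\lambda_1\lambda_3+\binom{\lambda_3}{2}$, while the number of multiplicity-$0$ points is $[k]_2-\lambda_1-\lambda_3$. A direct arithmetic check shows that the crude inequality holds for every $k\ge 8$ and also for the subcase $k=7$, $\lambda_3=13$; for those choices a good $Q$ exists and the projection contradicts the minimality of $k$.

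\textbf{Main obstacle.} The residue to handle is $k=7$ with $\lambda_3\in\{7,9,11\}$ and $k=6$ with $\lambda_3\in\{9,13\}$, where the bad-count bound fails. For these I would analyse, for each hyperplane $H$ with $\cM(H)=17$, the structure of $\cM|_H$: it is $4$-divisible of cardinality $17$ with $\lambda_2=0$ and no $(1,2,3)$-line, so Lemma~\ref{lemma_4_div_card_17_special_3} forces $\lambda_3(\cM|_H)\in\{1,2\}$, and, when $\lambda_3(\cM|_H)=2$, the two multiplicity-$3$ points span an $\cM$-line of multiplicity $6$ while $H$ spans a subspace of dimension at least $6$. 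Double counting pairs $(P,H)$ with $\cM(P)=3$, $\cM(H)=17$, $P\le H$ together with the local standard equations through a multiplicity-$3$ point $P$ (which give $2a_{17}(P)+a_{25}(P)=11\cdot 2^{k-5}+1$, hence $a_{17}(P)-a_{33}(P)=2-5\cdot 2^{k-5}$) should pin the global multiplicity-$17$ spectrum against the constraint $\sum_H \lambda_3(\cM|_H)\le 2a_{17}$ and yield the final numerical contradiction. The hardest single step is expected to be $k=6$, $\lambda_3=9$, where I foresee needing a solid-level geometric incompatibility in the spirit of the closing paragraph of Lemma~\ref{lemma_8_div_card_39}, exploiting the fact that any two collinear multiplicity-$3$ points inside a multiplicity-$17$ hyperplane force a very rigid third-point structure.
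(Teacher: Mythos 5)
Your argument tracks the paper's proof closely through its first stage: the exclusion of a $(1,2,3)$-line via Lemma~\ref{lemma_4_div_card_17_special_2} and Lemma~\ref{lemma_8_div_card_41_aux_2}, the projection through a double point to contradict Lemma~\ref{lemma_8_div_card_39}, and hence $\lambda_2=0$ and $\gamma_1(\cM)=3$; the projection through a well-chosen point of multiplicity zero for large $k$ is also exactly the paper's device. But the proof is not complete: for $k=7$ with $\lambda_3\in\{7,9,11\}$ and for $k=6$ you only describe a strategy (``should pin\dots'', ``I foresee needing\dots'') rather than give an argument, and these are precisely the cases where the real work lies. Moreover, the local spectrum identities you quote for a triple point $P$ do not check out: for $k=7$ one gets $a_{25}(P)+2a_{33}(P)=37$ and $a_{17}(P)=26+a_{33}(P)$, whereas your formula $a_{17}(P)-a_{33}(P)=2-5\cdot 2^{k-5}$ would be negative.

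The idea you are missing is to apply Lemma~\ref{lemma_4_div_card_17_special_3} \emph{before} the case split, to a hyperplane $H$ of multiplicity $17$ containing a line $L$ spanned by two points of multiplicity $3$ (such an $L$ exists since $3\lambda_3=11+2^{8-k}(3+a_{33})$ forces $\lambda_3\ge 4$, and such an $H$ exists by Lemma~\ref{lemma_8_div_card_41_aux_1}). Since $\cM|_H$ already carries two triple points, that lemma gives $\lambda_3\!\left(\cM|_H\right)=2$, hence $\lambda_1\!\left(\cM|_H\right)=11$, so $\lambda_1(\cM)\ge 11$ and $\lambda_3\le 10$ (killing your $\lambda_3\in\{11,13\}$ branches), and---crucially---the span of $\cM|_H$ has dimension at least $6$, so $k\ge 7$ and the entire $k=6$ case, which you flag as the hardest, simply does not occur. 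For the surviving case $k=7$ (where then $\lambda_3\in\{7,9\}$) the paper uses Lemma~\ref{lemma_4_div_card_17_aux} to find a codimension-$2$ subspace $K\le H$ with $\cM(K)=5$; since $41+2\cdot 5=3\cdot 17$, all three hyperplanes through $K$ have multiplicity $17$, they cover the whole space, and each contains at most two triple points by Lemma~\ref{lemma_4_div_card_17_special_3}, giving $\lambda_3\le 6<7$. Your proposed double count of pairs $(P,H)$ with $\cM(P)=3$ and $\cM(H)=17$ could in fact be made to close $k=7$ (with the corrected local equations, $\sum_P a_{17}(P)\ge 26\lambda_3$ exceeds $2a_{17}$ in every remaining subcase), but as written it is a plan rather than a proof, and without the dimension bound from Lemma~\ref{lemma_4_div_card_17_special_3} the case $k=6$ remains genuinely open in your write-up.
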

\begin{proof}
  Assume that $\cM$ is a spanning $8$-divisible multiset of points in $\PG(k-1,2)$ with cardinality $41$, $\gamma_1(\cM)\le 3$, and minimum possible dimension $k$ 
  of its span. Proposition~\ref{prop_Gamma_1} yields $\gamma_1(\cM)\ge 2$.
  
  Assume that $P_2$ is a point with multiplicity $2$. If $L\ge P_2$ is a line that also contains a point of multiplicity $3$ and a point of multiplicity $1$, then
  Lemma~\ref{lemma_8_div_card_41_aux_1} implies the existence of a hyperplane $H\ge L$ with $\cM(H)=17$ and Lemma~\ref{lemma_4_div_card_17_special_2} 
  yields a description of $\cM|_H$. However, Lemma~\ref{lemma_8_div_card_41_aux_2} gives a contradiction. Using Lemma~\ref{lemma_8_div_card_41_aux_1} again we conclude 
  $\cM(L')\le 5$ for each line $L'\ge P_2$. So, for the projection $\cM_{P_2}$ of $\cM$ through $P_2$ we have $\gamma_1\!\left(\cM_{P_2}\right)\le 3$. However, 
  $\cM_{P_2}$ is $8$-divisible with cardinality $39$, which contradicts Lemma~\ref{lemma_8_div_card_39}. Thus, we have $\lambda_2=0$ and $\gamma_1(\cM)=3$.

  Lemma~\ref{lemma_8_div_card_41_aux_1} gives $3\lambda_3  = 11 +2^{8-k}\cdot \left(3+a_{33}\right)$, so that $\lambda_3\ge 4$. Let $L$ be a line spanned by two points 
  of multiplicity $3$. Lemma~\ref{lemma_8_div_card_41_aux_1} yields the existence of a hyperplane $H\ge L$ with $\cM(H)=17$, so that Lemma~\ref{lemma_4_div_card_17_special_3} 
  gives $\cM(L)=6$, $\lambda_3\!\left(\cM|_H\right)=2$, $\lambda_1\!\left(\cM|_H\right)=11$, and $k\ge 7$ (for the dimension of $\cM$). Since $\lambda_1\ge 11$, we have $\lambda_3\le 10$. So, $4\le \lambda_3\le 10$
  implies $\lambda_1+\lambda_3 \le 33$, ${\lambda_3\choose 2}\le 45$, and $\lambda_1\cdot\lambda_3\le 140$. Since $33+45+140< 2^8-1$ for $k\ge 8$, there exists a point 
  $Q$ of multiplicity zero such that every line $L'\ge Q$ has multiplicity at most $3$. With this, the projection $\cM_Q$ of $\cM$ through $Q$ is $8$-divisible with 
  cardinality $8$ and $\gamma_1(\cM_Q)=3$, see Lemma~\ref{lemma_projection}. Due to the assumed minimality of $k$ we have $k=7$.

  Using $k=7$, the equation $3\lambda_3  = 11 +2^{8-k}\cdot \left(3+a_{33}\right)=17+2a_{33}$ implies $\lambda_3\in \{7,9\}$ and $a_{33}\in\{2,5\}$. Due to Lemma~\ref{lemma_4_div_card_17_aux} 
  the hyperplane $H$ with multiplicity $17$ contains a $5$-dimensional subspace $K$ with multiplicity $\cM(K)=5$. Since also the two other hyperplanes that contain $K$ then
  also have multiplicity $17$, Lemma~\ref{lemma_4_div_card_17_special_3} yields $\lambda_3\le 3\cdot 2=6$ -- contradiction.
\end{proof}

\section{Conclusion}
\label{sec_conclusion}

We have determined the minimum possible column multiplicities for $\Delta$-divisible binary linear codes for each given length $n$ and all $\Delta\in\{2,4,8\}$. This refines
a comprehensive characterization result on the possible length of $q^r$-divisible linear codes over $\mathbb{F}_q$ from \cite{kiermaier2020lengths}. The motivation 
for this refinement is that in some applications upper bounds on the allowed maximum column multiplicity are given. We mainly use geometric methods to obtain computer-free 
proofs. While the stated result can also be obtained by an exhaustive computer enumeration, the question arises whether the theoretical tools can be strengthened and approaches 
be simplified in order to obtain results for wider ranges of parameters. As outlined in Section~\ref{sec_computational_results} in the appendix, currently we cannot go much 
further even using extensive computer enumerations. Interestingly enough $\Gamma_2(2^r,n)$ is always a power of $2$, if finite at all, for $r\in\{1,2,3\}$. Our, rather sparse, numerical
data might suggest the conjecture that $\Gamma_q(q^r,n)$ always has to be a power of the characteristic $p$ of the underlying field $\mathbb{F}_q$. However, such a strong statement 
is wrong in general. To this end, note that applying the construction of \cite[Theorem 10]{landjevr2019} to the smallest non-trivial blocking set in $\PG(2,p)$, i.e.,the projective triangle, 
yields a $p$-divisible multiset $\cM$ of points with cardinality $\#\cM=\tfrac{p^2+1}{2} +2p$ and maximum point multiplicity $\gamma_1(\cM)=\tfrac{p+3}{2}$ for each odd prime $p$. So, 
for $p\ge 5$ we have $\Gamma_p(p,\#\cM)<p$ while \cite[Theorem 11]{honold2018partial} yields $\Gamma_p(p,\#\cM)>1$. 


\newcommand{\etalchar}[1]{$^{#1}$}

\appendix

\section{Combinatorial data of the possible $4$-divisible multisets of points with cardinality $17$}
\label{sec_data}

In our discussion of $8$-divisible multisets of points of cardinality $41$, $4$-divisible multisets 
of points of cardinality $17$ played an important role. To this end we have stated several auxiliary
results for the latter.

\begin{table}[htp!]
  \begin{center}
  \begin{tabular}{rrrr|rrr|cc}
  $n$ & $k$ & $\Delta$ & $\gamma_1$ & $\lambda_1$ & $\lambda_2$ & $\lambda_3$ & spectrum & \\ \hline
17 & 3 & 4 & 7 &  4 &  0 &  2 & $(a_{5},a_{9},a_{13})=(4,2,1)$ & \\
17 & 3 & 4 & 5 &  3 &  0 &  3 & $(a_{5},a_{9},a_{13})=(3,4,0)$ & \\ \hline
17 & 4 & 4 & 7 &  6 &  2 &  0 & $(a_{5},a_{9},a_{13})=(8,3,4)$ & \\
17 & 4 & 4 & 6 &  6 &  1 &  1 & $(a_{5},a_{9},a_{13})=(7,5,3)$ & \\
17 & 4 & 4 & 5 &  5 &  2 &  1 & $(a_{5},a_{9},a_{13})=(6,7,2)$ & \\
17 & 4 & 4 & 4 &  6 &  2 &  1 & $(a_{5},a_{9},a_{13})=(5,9,1)$ & \\
17 & 4 & 4 & 4 &  4 &  0 &  3 & $(a_{5},a_{9},a_{13})=(6,7,2)$ & \\
17 & 4 & 4 & 3 &  4 &  2 &  3 & $(a_{5},a_{9},a_{13})=(5,9,1)$ & \\
17 & 4 & 4 & 3 &  6 &  4 &  1 & $(a_{5},a_{9},a_{13})=(4,11,0)$ & \\ \hline 
17 & 5 & 4 & 7 & 10 &  0 &  0 & $(a_{5},a_{9},a_{13})=(16,5,10)$ & \\
17 & 5 & 4 & 6 &  7 &  2 &  0 & $(a_{5},a_{9},a_{13})=(14,9,8)$ & \\
17 & 5 & 4 & 5 &  9 &  0 &  1 & $(a_{5},a_{9},a_{13})=(12,13,6)$ & \\
17 & 5 & 4 & 5 &  6 &  3 &  0 & $(a_{5},a_{9},a_{13})=(12,13,6)$ & \\
17 & 5 & 4 & 4 &  7 &  3 &  0 & $(a_{5},a_{9},a_{13})=(10,17,4)$ & \\
17 & 5 & 4 & 4 & 10 &  0 &  1 & $(a_{5},a_{9},a_{13})=(10,17,4)$ & \\
17 & 5 & 4 & 4 &  6 &  2 &  1 & $(a_{5},a_{9},a_{13})=(11,15,5)$ & \\
17 & 5 & 4 & 3 &  5 &  3 &  2 & $(a_{5},a_{9},a_{13})=(10,17,4)$ & \\
17 & 5 & 4 & 3 &  9 &  1 &  2 & $(a_{5},a_{9},a_{13})=(9,19,3)$ & \\
17 & 5 & 4 & 3 & 10 &  2 &  1 & $(a_{5},a_{9},a_{13})=(8,21,2)$ & \\
17 & 5 & 4 & 3 &  6 &  4 &  1 & $(a_{5},a_{9},a_{13})=(9,19,3)$ & \\
17 & 5 & 4 & 2 &  7 &  5 &  0 & $(a_{5},a_{9},a_{13})=(8,21,2)$ & \\
17 & 5 & 4 & 2 & 11 &  3 &  0 & $(a_{5},a_{9},a_{13})=(7,23,1)$ & \\
17 & 5 & 4 & 2 & 15 &  1 &  0 & $(a_{5},a_{9},a_{13})=(6,25,0)$ & \\ \hline
17 & 6 & 4 & 4 & 10 &  0 &  1 & $(a_{5},a_{9},a_{13})=(21,31,11)$ & \\
17 & 6 & 4 & 4 &  7 &  3 &  0 & $(a_{5},a_{9},a_{13})=(21,31,11)$ & \\
17 & 6 & 4 & 3 & 11 &  0 &  2 & $(a_{5},a_{9},a_{13})=(18,37,8)$ & \\
17 & 6 & 4 & 3 & 10 &  2 &  1 & $(a_{5},a_{9},a_{13})=(17,39,7)$ & \\
17 & 6 & 4 & 3 &  6 &  4 &  1 & $(a_{5},a_{9},a_{13})=(19,35,9)$ & \\
17 & 6 & 4 & 3 & 12 &  1 &  1 & $(a_{5},a_{9},a_{13})=(16,41,6)$ & \\
17 & 6 & 4 & 2 &  7 &  5 &  0 & $(a_{5},a_{9},a_{13})=(17,39,7)$ & \\
17 & 6 & 4 & 2 & 11 &  3 &  0 & $(a_{5},a_{9},a_{13})=(15,43,5)$ & \\
17 & 6 & 4 & 2 & 13 &  2 &  0 & $(a_{5},a_{9},a_{13})=(14,45,4)$ & \\
17 & 6 & 4 & 2 & 15 &  1 &  0 & $(a_{5},a_{9},a_{13})=(13,47,3)$ & \\
17 & 6 & 4 & 1 & 17 &  0 &  0 & $(a_{5},a_{9},a_{13})=(12,49,2)$ & \\ \hline
17 & 7 & 4 & 2 & 11 &  3 &  0 & $(a_{5},a_{9},a_{13})=(31,83,13)$ & \\
17 & 7 & 4 & 2 &  7 &  5 &  0 & $(a_{5},a_{9},a_{13})=(35,75,17)$ & \\
17 & 7 & 4 & 2 & 13 &  2 &  0 & $(a_{5},a_{9},a_{13})=(29,87,11)$ & \\
17 & 7 & 4 & 2 & 15 &  1 &  0 & $(a_{5},a_{9},a_{13})=(27,91,9)$ & \\
17 & 7 & 4 & 1 & 17 &  0 &  0 & $(a_{5},a_{9},a_{13})=(25,95,7)$ & \\ \hline
17 & 8 & 4 & 1 & 17 &  0 &  0 & $(a_{5},a_{9},a_{13})=(51,187,17)$ & \\
  \end{tabular}
  \caption{Combinatorial data of $4$-divisible multisets of points of cardinality $17$.}
  \label{table_combinatorial_data_4_div_card_17}
  \end{center}
\end{table}  

In order to demonstrate the combinatorial richness we have listed key parameters of these objects in Table~\ref{table_combinatorial_data_4_div_card_17}. 
This data has been obtained by an exhaustive computer enumeration using the software package \texttt{LinCode} \cite{bouyukliev2021computer}. The three 
cases of maximum point multiplicity $1$ have also been computationally classified in \cite{ubt_eref40887}.

\section{Computational results}
\label{sec_computational_results}

One alternative way to prove Theorem~\ref{main_thm} is to use the fact that for every field size $q$ and every divisibility constant $\Delta\in\mathbb{N}$ 
there exists an integer $N(q,\Delta)$ such that for all $n\ge N(q,\Delta)$ there exists a projective $\Delta$-divisible linear code over $\mathbb{F}_q$ 
with length $n$. So, for each pair $q$, $\Delta$ a complete determination of the function $\Gamma_q(\Delta,\cdot)$ amounts to a finite computation. So, we
may simply enumerated all $\Delta$-divisible codes over $\mathbb{F}_q$ with length strictly smaller than $ N(q,\Delta)$ and determine the corresponding column 
multiplicities. To this end we have used the software package \texttt{LinCode} \cite{bouyukliev2021computer} and list the corresponding enumeration results 
of semi-linearly non-equivalent linear codes per length and dimension in the subsequent tables. Here a blank entry means that no such code exists.

\begin{table}[htp]
  \begin{center}
    \begin{tabular}{|r|rrrrrrrrr|}
    \hline
    n / k & 1 & 2 & 3 & 4 & 5 & 6 & 7 & 8 & 9\\ 
    \hline
      2 & 1 &   &    &    &    &    &    &   &   \\ 
      3 &   & 1 &    &    &    &    &    &   &   \\
      4 & 1 & 1 &  1 &    &    &    &    &   &   \\
      5 &   & 1 &  1 &  1 &    &    &    &   &   \\
      6 & 1 & 2 &  3 &  2 &  1 &    &    &   &   \\
      7 &   & 2 &  4 &  4 &  2 &  1 &    &   &   \\
      8 & 1 & 3 &  8 & 10 &  7 &  3 &  1 &   &   \\
      9 &   & 3 &  9 & 18 & 16 &  9 &  3 & 1 &   \\ 
     10 & 1 & 4 & 17 & 37 & 46 & 30 & 13 & 4 & 1 \\    
    \hline      
    \end{tabular}
    \caption{Number of even codes per dimension $k$ and effective length $n$.}
  \end{center}
\end{table}

\begin{table}[htp]
  \begin{center}
    \begin{tabular}{|r|rrrrrrrrr|}
    \hline
    n / k & 1 & 2 & 3 & 4 & 5 & 6 & 7 & 8 & 9\\ 
    \hline
     4 & 1 &   &    &    &     &     &     &    &    \\
     6 &   & 1 &    &    &     &     &     &    &    \\
     7 &   &   &  1 &    &     &     &     &    &    \\
     8 & 1 & 1 &  1 &  1 &     &     &     &    &    \\
    10 &   & 1 &  1 &  1 &     &     &     &    &    \\
    11 &   &   &  1 &  1 &     &     &     &    &    \\  
    12 & 1 & 2 &  3 &  4 &   2 &     &     &    &    \\
    13 &   &   &  1 &  1 &   2 &     &     &    &    \\
    14 &   & 2 &  4 &  6 &   5 &   4 &     &    &    \\  
    15 &   &   &  3 &  6 &   6 &   4 &   2 &    &    \\
    16 & 1 & 3 &  8 & 18 &  21 &  15 &   7 &  2 &    \\ 
    17 &   &   &  2 &  7 &  14 &  11 &   5 &  1 &    \\ 
    18 &   & 3 &  9 & 27 &  44 &  45 &  21 &  6 &    \\ 
    19 &   &   &  6 & 22 &  52 &  62 &  40 & 10 &    \\ 
    20 & 1 & 4 & 17 & 64 & 149 & 212 & 156 & 65 & 10 \\ 
    \hline      
    \end{tabular}
    \caption{Number of doubly-even codes per dimension $k$ and effective length $n$.}
  \end{center}
\end{table}

\begin{table}[htp]
  \begin{center}
    \begin{tabular}{|r|rrrrrrrrrrr|}
    \hline
    n / k & 1 & 2 & 3 & 4 & 5 & 6 & 7 & 8 & 9 & 10 & 11 \\ 
    \hline
     8 & 1 &   &    &    &     &     &     &     &    &    &   \\
    12 &   & 1 &    &    &     &     &     &     &    &    &   \\
    14 &   &   &  1 &    &     &     &     &     &    &    &   \\
    15 &   &   &    &  1 &     &     &     &     &    &    &   \\
    16 & 1 & 1 &  1 &  1 &   1 &     &     &     &    &    &   \\
    20 &   & 1 &  1 &  1 &     &     &     &     &    &    &   \\
    22 &   &   &  1 &  1 &     &     &     &     &    &    &   \\
    23 &   &   &    &  1 &   1 &     &     &     &    &    &   \\
    24 & 1 & 2 &  3 &  4 &   4 &   1 &     &     &    &    &   \\
    26 &   &   &  1 &  1 &   2 &     &     &     &    &    &   \\
    27 &   &   &    &  1 &   1 &   1 &     &     &    &    &   \\
    28 &   & 2 &  4 &  6 &   7 &   6 &   1 &     &    &    &   \\
    29 &   &   &    &  1 &   1 &   2 &   1 &     &    &    &   \\
    30 &   &   &  3 &  6 &   8 &   7 &   6 &   2 &    &    &   \\
    31 &   &   &    &  4 &   8 &   8 &   6 &   4 &  1 &    &   \\
    32 & 1 & 3 &  8 & 18 &  32 &  34 &  24 &  13 &  5 &  1 &   \\
    34 &   &   &  2 &  7 &  14 &  11 &   5 &   1 &    &    &   \\
    35 &   &   &    &  3 &   7 &   7 &   3 &   1 &    &    &   \\
    36 &   & 3 &  9 & 27 &  54 &  65 &  36 &  11 &  1 &    &   \\
    37 &   &   &    &  2 &   5 &   8 &   5 &   1 &    &    &   \\
    38 &   &   &  6 & 22 &  57 &  79 &  61 &  21 &  2 &    &   \\
    39 &   &   &    & 10 &  36 &  57 &  49 &  30 & 10 &  1 &   \\
    40 & 1 & 4 & 17 & 64 & 194 & 347 & 323 & 187 & 59 & 11 & 1 \\
    41 &   &   &    &  2 &  12 &  29 &  26 &  12 &  3 &    &   \\
    \hline      
    \end{tabular}
    \caption{Number of triply-even codes per dimension $k$ and effective length $n$.}
  \end{center}
\end{table}

For $16$-divisible binary linear codes we have only partial results. We remark that the smallest 
attained dimension for a given length can be explained by a statement similar to Lemma~\ref{lemma_divisibility_hyperplane}. 
Lengths that do not occur at all are explained by Theorem~\ref{thm_lengths_of_divisible_codes}. The complete classification 
of the possible lengths of projective $16$-divisible binary linear codes is still an open problem, see e.g.\ 
\cite{honold2018partial,honold2019lengths}. The same is true for the projective $q^2$-divisible linear codes over $\mathbb{F}_q$ 
when $q\ge 3$ and the projective $q$-divisible linear codes over $\mathbb{F}_q$ when $q\ge 5$.

\begin{table}[htp]
  \begin{center}
    \begin{tabular}{|r|rrrrrrrrrrrr|}
    \hline
    n / k & 1 & 2 & 3 & 4 & 5 & 6 & 7 & 8 & 9 & 10 & 11 & 12 \\ 
    \hline
    16 & 1 &   &   &    &    &    &    &    &    &    &   &   \\
    24 &   & 1 &   &    &    &    &    &    &    &    &   &   \\
    28 &   &   & 1 &    &    &    &    &    &    &    &   &   \\
    30 &   &   &   &  1 &    &    &    &    &    &    &   &   \\
    31 &   &   &   &    &  1 &    &    &    &    &    &   &   \\
    32 & 1 & 1 & 1 &  1 &  1 &  1 &    &    &    &    &   &   \\
    40 &   & 1 & 1 &  1 &    &    &    &    &    &    &   &   \\
    44 &   &   & 1 &  1 &    &    &    &    &    &    &   &   \\
    46 &   &   &   &  1 &  1 &    &    &    &    &    &   &   \\
    47 &   &   &   &    &  1 &  1 &    &    &    &    &   &   \\ 
    48 & 1 & 2 & 3 &  4 &  4 &  3 &  1 &    &    &    &   &   \\
    52 &   &   & 1 &  1 &  2 &    &    &    &    &    &   &   \\
    54 &   &   &   &  1 &  1 &  1 &    &    &    &    &   &   \\
    55 &   &   &   &    &  1 &  1 &  1 &    &    &    &   &   \\ 
    56 &   & 2 & 4 &  6 &  7 &  8 &  3 &  1 &    &    &   &   \\ 
    58 &   &   &   &  1 &  1 &  2 &  1 &    &    &    &   &   \\
    59 &   &   &   &    &  1 &  1 &  1 &  1 &    &    &   &   \\
    60 &   &   & 3 &  6 &  8 &  9 &  8 &  4 &  1 &    &   &   \\ 
    61 &   &   &   &    &  1 &  1 &  2 &  1 &  1 &    &   &   \\
    62 &   &   &   &  4 &  8 & 10 &  9 &  8 &  4 &  2 &   &   \\
    63 &   &   &   &    &  5 & 10 & 10 &  8 &  6 &  3 & 1 &   \\   
    64 & 1 & 3 & 8 & 18 & 32 & 48 & 48 & 35 & 21 & 11 & 4 & 1 \\
    68 &   &   & 2 &  7 & 14 & 11 &  5 &  1 &    &    &   &   \\ 
    70 &   &   &   &  3 &  7 &  7 &  3 &  1 &    &    &   &   \\
    71 &   &   &   &    &  3 &  7 &  7 &  3 &  1 &    &   &   \\
    72 &   & 3 & 9 & 27 & 54 & 75 & 56 & 26 &  6 &  1 &   &   \\
    74 &   &   &   &  2 &  5 &  8 &  5 &  1 &    &    &   &   \\
    75 &   &   &   &    &  2 &  5 &  5 &  4 &  1 &    &   &   \\
    76 &   &   & 6 & 22 & 59 & 86 & 75 & 34 &  9 &  1 &   &   \\ 
    77 &   &   &   &    &  2 &  5 &  8 &  6 &  4 &  1 &   &   \\
    78 &   &   &   & 10 & 36 & 64 & 66 & 52 & 28 & 11 & 2 &   \\
    79 &   &   &   &    & 14 & 47 & 71 & 63 & 44 & 23 & 8 & 1 \\
    \hline      
    \end{tabular}
    \caption{Number of $16$-divisible codes per dimension $k$ and effective length $n$.}
  \end{center}
\end{table}

For ternary linear codes we can state $\Gamma_3(3,n)=1$ iff $n=4$ or $n\ge 8$. Moreover, we have 
$\Gamma_3(3,n)=3$ iff $n\in \{3,6,7\}$ and $\Gamma_3(3,n)=\infty$ iff $n\in \{1,2,5\}$. For 
quaternary linear codes we can state $\Gamma_4(4,n)=1$ iff $n\in\{5, 10, 15, 16, 17\}$ or $n\ge 20$. 
Moreover, we have $\Gamma_4(4,n)=2$ iff $n\in\{12,14,18,19\}$, $\Gamma_4(4,n)=4$ iff $n\in\{4,8,9,13\}$, and 
$\Gamma_4(4,n)=\infty$ iff $n\in\{1,2,3,6,7,11\}$. For $n\in\{6,7,9\}$ there exist projective $[n,3]_4$ two-weight 
codes that are $2$-divisible. They belong to the families $TF1$, $TF2$, $RT1$, and $RT2$, see \cite{calderbank1986geometry}. 
Since $19=14+5$ this gives constructions for all cases with $\Gamma_4(4,n)=2$.

\begin{table}[htp]
  \begin{center}
    \begin{tabular}{|r|rrr|}
    \hline
    n / k & 1 & 2 & 3 \\ 
    \hline
    3 & 1 &   &   \\ 
    4 &   & 1 &   \\
    6 & 1 & 1 &   \\ 
    7 &   & 1 & 1 \\
    \hline      
    \end{tabular}
    \caption{Number of $3$-divisible ternary linear codes per dimension $k$ and effective length $n$.}
  \end{center}
\end{table}

\begin{table}[htp]
  \begin{center}
    \begin{tabular}{|r|rrrrrrr|}
    \hline
    n / k & 1 & 2 & 3 & 4 & 5 & 6 & 7 \\ 
    \hline
     4 & 1 &   &    &    &    &   &   \\
     5 &   & 1 &    &    &    &   &   \\
     8 & 1 & 1 &    &    &    &   &   \\
     9 &   & 1 &  1 &    &    &   &   \\
    10 &   & 1 &  1 &  1 &    &   &   \\ 
    12 & 1 & 2 &  2 &    &    &   &   \\
    13 &   & 2 &  3 &  1 &    &   &   \\
    14 &   & 1 &  5 &  3 &  1 &   &   \\  
    15 &   & 1 &  3 &  6 &  2 & 1 &   \\ 
    16 & 1 & 4 &  9 &  7 &  2 &   &   \\
    17 &   & 3 & 12 &  9 &  2 &   &   \\ 
    18 &   & 2 & 18 & 25 &  8 & 1 &   \\ 
    19 &   & 1 & 14 & 42 & 25 & 6 & 1 \\
    \hline      
    \end{tabular}
    \caption{Number of $4$-divisible quaternary linear codes per dimension $k$ and effective length $n$.}
  \end{center}
\end{table}

\end{document}